\documentclass[12pt]{article}

\usepackage[english]{babel}
\usepackage{amssymb}
\usepackage{amsfonts,amsmath,mathtools}
\usepackage{graphics}
\usepackage{lipsum}
\usepackage[ruled]{algorithm2e}
\usepackage{float}
\usepackage{url}
\usepackage{subfig}
\usepackage{xcolor}
\usepackage[export]{adjustbox}
\usepackage{tikz-cd}
\usepackage{titlesec}

\usepackage{lipsum} 

\titleformat{\section}
{\centering\normalfont\scshape}{\thesection.}{0.5em}{}
\titleformat{\subsection}
{\normalfont}{\thesubsection.}{0.5em}{\textbf}

\usepackage{etoolbox}
\apptocmd{\thebibliography}{\setlength{\itemsep}{-3pt}}{}{}

\usepackage{bm}
\usepackage{enumitem}
\usepackage{faktor}
\usepackage{comment}

\usepackage[top=4cm,bottom=3.5cm,left=3.1cm,right=3.1cm,asymmetric]{geometry}

\newcommand\blfootnote[1]{%
	\begingroup
	\renewcommand\thefootnote{}\footnote{#1}%
	\addtocounter{footnote}{-1}%
	\endgroup
}

\newtheorem{Lem}{Lemma}[section]
\newtheorem{Prop}[Lem]{Proposition}
\newtheorem{Cor}[Lem]{Corollary}
\newtheorem{Thm}[Lem]{Theorem}

\newtheorem{Def}[Lem]{Definition}
\newtheorem{Rem}[Lem]{Remark}

\newtheorem{Expl}[Lem]{Example}

\newtheorem{Conj}[Lem]{Conjecture}
\newtheorem{Que}[Lem]{Question}

\newenvironment{proof}[1][Proof]{\textrm{\em #1.} }{\hfill$\Box$\medskip\medskip}

\newcommand\Min{{\operatorname{Min}}}

\newcommand\supp{{\operatorname{supp}}}
\newcommand\cosupp{{\operatorname{cosupp}}}

\newcommand\lex{{\operatorname{lex}}}
\newcommand\slex{{\operatorname{slex}}}

\newcommand\pd{{\operatorname{pd}}}

\newcommand\depth{{\operatorname{depth}}}

\newcommand\height{{\operatorname{height}}}


\def\L{\mathcal{L}}

\def\NZQ{\mathbb}
\def\NN{{\NZQ N}}

\let\emptyset\varnothing
\let\epsilon\varepsilon

\def\alt{\textup{height}}

\def\p{\mathfrak{p}}
\def\q{\mathfrak{q}}

\captionsetup[subfigure]{labelformat=empty}
\captionsetup[table]{position=top}
\captionsetup[subtable]{position=top}

\begin{document}

\title{\bf\normalsize\MakeUppercase{Classification of Cohen--Macaulay} $t$--\MakeUppercase{spread lexsegment ideals via simplicial complexes}}

\author{Marilena Crupi, Antonino Ficarra}	

\newcommand{\Addresses}{{
 \footnotesize
 \textsc{Department of Mathematics and Computer Sciences, Physics and Earth Sciences, University of Messina, Viale Ferdinando Stagno d'Alcontres 31, 98166 Messina, Italy}
\begin{center}
 \textit{E-mail addresses}: \texttt{mcrupi@unime.it}; \texttt{antficarra@unime.it}
\end{center}

}}
\date{}
\maketitle
\Addresses

\begin{abstract}
We study the minimal primary decomposition of completely $t$--spread lexsegment ideals via simplicial complexes. 
We determine some algebraic invariants of such a class of $t$--spread ideals. Hence, we classify all $t$--spread lexsegment ideals which are Cohen--Macaulay.
\blfootnote{
	\hspace{-0,3cm} \emph{Keywords:} Betti numbers, $t$--spread ideals, primary decomposition, simplicial complexes, Cohen--Macaulay rings.\\
	\emph{2020 Mathematics Subject Classification:} 05E40, 13B25, 13D02, 16W50.
	}
\end{abstract}

\section*{Introduction}
In combinatorial commutative algebra, one often associates to a combinatorial object $X$ a suitable monomial ideal $I_X$ in a polynomial ring $S=K[x_1,\dots,x_n]$ in finitely many variables with coefficients in a field $K$. Distinguished combinatorial properties of the object $X$ are reflected by algebraic properties of the ideal $I_X$, and conversely. One says that $X$ is Cohen--Macaulay if the ring $S/I_X$ is Cohen--Macaulay. A typical task is to classify all Cohen--Macaulay objects $X$ of a given class. For instance, in combinatorics, to a simplicial complex $\Delta$ on the vertex set $[n]$ one associates the Stanley--Reisner ideal $I_\Delta$, \emph{i.e.}, the ideal generated by all squarefree monomials $x_{i_1}\cdots x_{i_r}$, $r\le n$, such that $\{i_1, \ldots, i_r\}\notin \Delta$. 
A simplicial complex $\Delta$ is said Cohen--Macaulay if the ideal $I_\Delta$ is Cohen--Macaulay, \emph{i.e.}, $S/I_\Delta$ is a Cohen--Macaulay ring. 
To classify all Cohen--Macaulay simplicial complexes is an hopeless task. However, many results can be found, for instance, in \cite{JT}.

The aim of this article is to classify all Cohen--Macaulay $t$--spread lexsegment ideals for $t\ge1$. 

Let $S=K[x_1,\dots,x_n]$ be the polynomial ring in $n$ variables with coefficients over a fixed field $K$. 
Let $t\ge 0$ be an integer. A monomial $u=x_{i_1}x_{i_2}\cdots x_{i_d}$ is \textit{$t$--spread} if $i_{j+1}-i_j\ge t$, for all $j=1,\dots,d-1$. 
If $t\ge1$, a $t$--spread monomial is a squarefree monomial. 
A monomial ideal is $t$--spread if it is generated by $t$--spread monomials. Such a topic introduced in \cite{EHQ} has immediately captured the attention of many algebraists (see, for instance, \cite{LA, AFC1, AFC2, ACF3, AEL, CAC, RD, FC1}).
Let $t\ge1$. A $t$--spread ideal $I$ of $S$ is said a \emph{$t$--spread lexsegment ideal} if for all $t$--spread monomials $u\in I$ and all $t$--spread monomials $v\in S$ with $\deg(u)=\deg(v)$ and such that $v\ge_{\slex}u$, then $v\in I$ \cite{CAC}, where $\ge_{\slex}$ is the squarefree lexicographic order \cite{AHH2, JT}. Therefore, for $t=1$, such a definition coincides with the \emph{classical} definition of squarefree lexsegment ideal \cite{AHH2}. 
Let $t\ge 1$ and let $M_{n,d,t}$ be the set of all $t$--spread monomials of $S$ of degree $d$. Assume that $M_{n,d,t}$ is endowed with the squarefree lexicographic order $\ge_{\slex}$.  Let $u,v\in M_{n,d,t}$. If $u\ge_{\slex}v$, the set $\mathcal{L}_t(u,v)=\{w\in M_{n,d,t}:u\ge_{\slex}w\ge_{\slex}v\}$ is called an \emph{arbitrary $t$--spread lexsegment}.
If $u$ is the maximum monomial of $M_{n,d,t}$, or $v$ is the minimum monomial of $M_{n,d,t}$, we say that $\mathcal{L}_t(u,v)$ is an \emph{initial} or a \emph{final $t$--spread lexsegment}, respectively. 
The ideal generated by an arbitrary $t$--spread lexsegment is called \emph{arbitrary $t$--spread lexsegment ideal}. It is clear that the notion of initial $t$--spread lexsegment ideal coincides with the notion of $t$--spread lexsegment ideal in \cite{CAC}. 
An arbitrary $t$--spread lexsegment ideal $I = (\mathcal{L}_t(u,v))$ is said \emph{completely $t$--spread lexsegment} if $I=(\mathcal{L}_t(\max(M_{n,d,t}),v))\cap(\mathcal{L}_t(u,\min(M_{n,d,t})))$, \cite{FC1}. 
One can easily observe that every initial and every final $t$--spread lexsegment ideal is a completely $t$--spread lexsegment, but not all arbitrary $t$--spread lexsegment ideals are completely $t$--spread lexsegment ideals. In \cite{FC1}, the authors characterized all completely $t$--spread lexsegment ideals and classified all completely $t$--spread lexsegment ideals with a linear resolution. The case $t=0$ has been faced by De Negri and Herzog in \cite{DH}.

In this article we determine the minimal primary decomposition of completely $t$--spread lexsegment ideals and some algebraic invariants of such a class of ideals by tools from the simplicial complex theory.  
Hence, we classify all $t$--spread lexsegment ideals which are Cohen--Macaulay. The case $t=0$ has been considered in \cite{EOS2010} and the case $t=1$ has been analyzed in \cite{BST}.

The article is organized as follows. Section \ref{sec1} contains preliminary notions and results.
In Section \ref{sec2}, we explicitly compute the minimal primary decomposition for
initial and final $t$--spread lexsegment ideals (Theorems \ref{primdecompinitialtspreadlex}, \ref{primdecompfinaltspreadlex}). As a consequence we are able to compute the standard primary decomposition for the class of completely $t$--spread lexsegments (Theorem \ref{primdecompgeneraltspreadlex}). Moreover, we obtain formulas for
the Krull dimension, the projective dimension and the depth of $S/I$ when $I$ is an initial or a final $t$--spread lexsegment ideal.
In Section \ref{sec3}, we classify all $t$--spread lexsegment ideals which are Cohen--Macaulay by the results stated in Section \ref{sec2}. The notion of Betti splitting will be a crucial tool. Finally, Section \ref{sec4} contains our conclusions and perspectives.

All the examples are constructed by means of \emph{Macaulay2} packages one of which developed in \cite{LA}. Furthermore, some functions described in \cite{LA} have allowed us to test classes of Cohen--Macaulay $t$--spread lexsegment ideals whose determination has been fundamental for the development of Section \ref{sec3}.

\section{Preliminaries}\label{sec1}

Let $S=K[x_1,\dots,x_n]$ be the polynomial ring in $n$ indeterminates with coefficients in a field $K$. $S$ is a $\NN$--graded $K$--algebra with $\deg(x_i)=1$ for all $i$. 

If $I$ is a monomial ideal of $S$, we denote by $G(I)$ the unique minimal set of monomial generators of $I$. Moreover, we set $G(I)_j=\{u\in G(I):\deg(u)=j\}$.

Given a non empty subset $A\subseteq[n]$, we set ${\bf x}_A=\prod_{i\in A}x_i$. 
Moreover, we set ${\bf x}_{\emptyset}=1$. Every monomial $w\in S$ can be written as $w=x_{i_1}x_{i_2}\cdots x_{i_d}$, with sorted indexes $1\le i_1\le i_2\le\dots\le i_d\le n$. We define the \textit{support} of $w$ as follows:
$$
\supp(w)=\{j: x_j \,\mbox{divides $w$} \}= \{i_1,i_2,\dots,i_d\}.
$$

With the notation above,  $\min(w)=\min\big\{i:i\in\supp(u)\big\}=i_1$, $\max(w)=\max\big\{i:i\in\supp(u)\big\}= i_d$. Moreover, we set $\max(1)=\min(1)=0$.
If $i_1\ne i_2\ne\dots\ne i_d$ we say that $u$ is \textit{squarefree}. In such a case, $u$ can be written as $u={\bf x}_A$ with $A=\supp(u)$.\\

We quote next definition from \cite{EHQ}.
\begin{Def}
	\rm Given $n\ge1,t\ge0$ and $u=x_{i_1}x_{i_2}\cdots x_{i_d}\in S$, 
	with $1\le i_1\le i_2\le\ldots\le i_d\le n$, we say that $u$ is \textit{$t$--spread} if $i_{j+1}-i_j\ge t$, for all $j=1,\dots,d-1$. We say that a monomial ideal $I$ of $S$ is \textit{$t$--spread} if it is generated by $t$--spread monomials.
\end{Def}

We note that any monomial ideal of $S$ is a $0$--spread monomial ideal, and any squarefree monomial ideal of $S$ is a $1$--spread monomial ideal.\\

Let $n,d,t\ge 1$.  We denote by $M_{n,d,t}$ the set of all $t$--spread monomials of $S$ of degree $d$. Throughout this paper we tacitly assume that $n\ge 1+(d-1)t$, otherwise $M_{n,d,t}=\emptyset$. From \cite{EHQ}, we have that $|M_{n,d,t}|=\binom{n-(t-1)(d-1)}{d}$. The monomial ideal generated by $M_{n,d,t}$ is denoted by $I_{n,d,t}$ and is called the \textit{$t$--spread Veronese ideal of degree $d$} of the polynomial ring $S=K[x_1,\dots,x_n]$ \cite{EHQ}.\\

From now on we consider $t\ge 1$ and assume that $M_{n,d,t}$ is endowed with the \emph{squarefree lexicographic order}, $\ge_{\slex}$, \cite{JT}. Recall that given $u=x_{i_1}x_{i_2}\cdots x_{i_d}, v=x_{j_1}x_{j_2}\cdots x_{j_d} \in M_{n,d,t}$, with $1\le i_1<i_2<\dots< i_d\le n$, $1\le j_1<j_2<\dots<j_d\le n$, then $u>_{\slex}v$ if
$$i_1=j_1,\ \dots,\ i_{s-1}=j_{s-1}\ \ \text{and}\ \ i_s<j_s$$
for some $1\le s\le d$. 

Let $U$ be a not empty subset of $M_{n,d,t}$. We denote by $\max(U)$ ($\min(U)$, respectively) the maximum (minimum, respectively) monomial $w\in U$, with respect to $\ge_{\slex}$. One has 
\begin{align*}
\max(M_{n,d,t})& = x_1x_{1+t}x_{1+2t}\cdots x_{1+(d-1)t}, \\
\min(M_{n,d,t})& = x_{n-(d-1)t}x_{n-(d-2)t}\cdots x_{n-t}x_n.
\end{align*}

In \cite{FC1}, the following definitions have been introduced.
\begin{Def}\rm
	Let $u,v\in M_{n,d,t}$, $u\ge_{\slex}v$. 
	The set
	\[\mathcal{L}_t(u,v) = \big\{w\in M_{n,d,t}:u\ge_{\slex}w\ge_{\slex}v\big\}\]
	is called an \emph{arbitrary $t$--spread lexsegment set}, or simply a \emph{$t$--spread lexsegment set}. The set
	\[\mathcal{L}_t^{i}(v) = \big\{w\in M_{n,d,t}:w\ge_{\slex}v\big\}=\mathcal{L}_t(\max(M_{n,d,t}),v)\]
	is called an \emph{initial $t$--spread lexsegment} and the set
	\[\mathcal{L}_t^{f}(u)=\big\{w\in M_{n,d,t}:w\le_{\slex}u\big\}=\mathcal{L}_t(u,\min(M_{n,d,t}))
	\]
	is called a \emph{final $t$--spread lexsegment}.
\end{Def}

One can observe that, if $u,v\in M_{n,d,t}$, with $u\ge_{\slex}v$, then $\mathcal{L}_t(u,v)=\mathcal{L}_t^i(v)\cap\mathcal{L}_t^f(u)$.

\begin{Def}\label{def:comp}\rm
	A $t$--spread monomial ideal $I$ of $S$ is called an \textit{arbitrary $t$--spread lexsegment ideal}, or simply a \textit{$t$--spread lexsegment ideal} if it is generated by an (arbitrary) $t$--spread lexsegment.\\ A $t$--spread lexsegment ideal $I=(\mathcal{L}_t(u,v))$ is a \textit{completely $t$--spread lexsegment ideal} if
	$$
	I=J\cap T,
	$$
	where $J=(\mathcal{L}_t^i(v))$ and $T=(\mathcal{L}_t^f(u))$.
\end{Def}

In \cite{FC1}, all completely $t$--spread lexsegment ideals have been characterized.\medskip

Now, we recall some important notions that will be useful in the sequel.\\

A $t$--spread monomial ideal $I$ is a \textit{$t$--spread strongly stable ideal} if for all $t$--spread monomial $u\in I$, all $j\in \supp(u)$ and all $1\le i< j$ such that $x_i(u/x_j)$ is $t$--spread, it follows that $x_i(u/x_j)\in I$.

Every initial $t$--spread lexsegment ideal $I$ of $S$ is a $t$--spread strongly stable ideal. Hence, in order to compute the graded Betti numbers of $I$ one may use the Ene, Herzog, Qureshi's formula (\cite[Corollary 1.12]{EHQ}):
\begin{equation}
\label{eq1}
\beta_{i,i+j}(I)\ =\ \sum_{u\in G(I)_j}\binom{\max(u)-t(j-1)-1}{i}.
\end{equation}

A $t$--spread final lexsegment ideal $I$ of $S$ is also a $t$--spread strongly stable ideal, but with the order of the variables reversed, \emph{i.e.}, $x_n> x_{n-1} > \cdots >x_1$. Hence, in order to compute their graded Betti numbers one may use the following \emph{modified} Ene, Herzog, Qureshi's formula:
\begin{equation}\label{Bettinumbersreversetspread}
\beta_{i,i+j}(I)\ =\ \sum_{u\in G(I)_j}\binom{n-\min(u)-t(j-1)}{i}.
\end{equation}

Let $>_{\lex}$ be the usual \textit{lexicographic order} on $S$, with $x_1>x_2>\cdots>x_n$, \cite{JT}. The next theorem collects some results from  \cite{FC1} that will be pivotal for the aim of this article.
\begin{Thm}\textup{\cite{FC1}}.\label{thm:compltspreadlex}
	Given $n,d,t\ge1$, let $u=x_{i_1}x_{i_2}\cdots x_{i_d}$ and $v=x_{j_1}x_{j_2}\cdots x_{j_d}$ be $t$--spread monomials of degree $d$ of $S$ such that $u\ge_{\slex}v$. 

	\begin{enumerate}\item[\em(1)] Let $I=(\L_t(u,v))$. The following conditions are equivalent:
	\begin{enumerate}
		\item[\em(a)] $I$ is a completely $t$--spread lexsegment ideal, \emph{i.e.}, $I=J\cap T$;
		\item[\em(b)] for every $w\in M_{n,d,t}$ with $w<_{\slex}v$ there exists an integer $s>i_1$ such that $x_s$ divides $w$ and $x_{i_1}(w/x_{s})\le_{\lex}u$. 
	\end{enumerate}
	
	\item[\em(2)] Let $I=(\L_t(u,v))$ be a completely $t$--spread lexsegment ideal with $\min(v)>\min(u)=1$. $I$ has a linear resolution if and only if one of the following conditions hold:
	\begin{enumerate}
		\item[\em(i)] $i_2=1+t$;
		\item[\em(ii)] $i_2>1+t$ and for the largest $w\in M_{n,d,t}$, $w<_{\lex}v$, we have $x_1(w/x_{\max(w)})\le_{\lex}x_1x_{i_2-t}x_{i_3-t}\cdots x_{i_d-t}$.
	\end{enumerate}
	\item[\em(3)] Let $I=(\L_t(u,v))$ be a completely $t$--spread lexsegment ideal and suppose $I=(\L_t(u,v))$ has a linear resolution. Then, for all $i\ge0$,
	$$
	\beta_{i}(I)=\sum_{w\in\mathcal{L}_t^f(u)}\binom{n-\min(w)-(d-1)t}{i}-\sum_{\substack{w\in\mathcal{L}_t^f(v)\\ w\ne v}}\binom{\max(w)-(d-1)t-1}{i}.
	$$
	\end{enumerate}
\end{Thm}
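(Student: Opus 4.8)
The plan is to handle all three parts through the single structural identity $J+T=I_{n,d,t}$, where $I_{n,d,t}=(M_{n,d,t})$ is the $t$--spread Veronese ideal. Since $u\ge_{\slex}v$, every $w\in M_{n,d,t}$ is either $\ge_{\slex}v$ or $<_{\slex}v\le_{\slex}u$, so $\mathcal{L}_t^i(v)\cup\mathcal{L}_t^f(u)=M_{n,d,t}$ and hence $J+T=I_{n,d,t}$. Moreover $J$ is $t$--spread strongly stable, $T$ is $t$--spread strongly stable for the reversed order of the variables, and $I_{n,d,t}$ is both; being generated in the single degree $d$, all three have a linear resolution, with graded Betti numbers given by \eqref{eq1} (for $J$ and $I_{n,d,t}$) and by \eqref{Bettinumbersreversetspread} (for $T$). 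The Mayer--Vietoris short exact sequence
$$0\longrightarrow J\cap T\longrightarrow J\oplus T\longrightarrow J+T\longrightarrow 0$$
together with the induced long exact sequence in $\Tor_\bullet^S(-,K)$ will be the engine for parts (2) and (3).

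For part (1), I would first record the trivial inclusion $I\subseteq J\cap T$, since each generator of $I$ lies in $\mathcal{L}_t^i(v)\cap\mathcal{L}_t^f(u)$. Writing $W=\{w\in M_{n,d,t}:w<_{\slex}v\}$, the final lexsegment ideal decomposes as $T=I+(W)$, and because $I\subseteq J$ one checks at the level of monomials that $J\cap T=I+(J\cap(W))$. Thus the equality $I=J\cap T$ reduces to the single inclusion $J\cap(W)\subseteq I$. The minimal monomial generators of $J\cap(W)$ are among the $\mcm(a,w)$ with $a\in\mathcal{L}_t^i(v)$ and $w\in W$, so the task becomes to show that every such $\mcm$ is divisible by some element of $\mathcal{L}_t(u,v)$. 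Condition (b) is precisely the combinatorial certificate for this: replacing the variable $x_s$ in $w$ by $x_{i_1}$ produces a degree-$d$ $t$--spread monomial $x_{i_1}(w/x_s)\le_{\lex}u$ which, paired with $a\ge_{\slex}v$, forces membership of the relevant $\mcm$ in $I$; conversely a $w\in W$ violating (b) yields an $\mcm$ escaping $I$. The delicate point, and the main obstacle in part (1), is the interplay between the squarefree lexicographic order $\ge_{\slex}$ defining the segments and the ordinary lexicographic order $\le_{\lex}$ appearing in (b); tracking which replacements stay $t$--spread and comparing the two orders on the modified monomials is the technical heart.

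For part (2), I would read linearity of $I=J\cap T$ directly off the long exact sequence. Since $J$, $T$, $I_{n,d,t}$ are linear and generated in degree $d$, at every internal degree $j\ne i+d,\,i+d+1$ the sequence forces $\Tor_i^S(J\cap T,K)_j=0$, so the only place a non-linear Betti number of $J\cap T$ can appear is the strand $j=i+d+1$, where it equals the cokernel of the comparison map
$$\Tor_{i+1}^S(J,K)_{i+d+1}\oplus\Tor_{i+1}^S(T,K)_{i+d+1}\longrightarrow\Tor_{i+1}^S(I_{n,d,t},K)_{i+d+1}.$$
Hence $I$ has a linear resolution if and only if this map is surjective for all $i$, equivalently if and only if the Betti-number identity $\beta_{i,j}(I_{n,d,t})=\beta_{i,j}(J)+\beta_{i,j}(T)+\beta_{i-1,j}(J\cap T)$ holds for all $i,j$. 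I would then translate this homological condition into the combinatorial dichotomy (i)--(ii): under the hypothesis $\min(v)>\min(u)=1$ the obstruction to surjectivity is localized at the top of the first linear strand and is governed by the second factor $x_{i_2}$ of $u$ and by the largest $t$--spread monomial $w<_{\lex}v$. The case $i_2=1+t$ makes $T$ large enough that the comparison map is automatically onto, whereas for $i_2>1+t$ surjectivity survives exactly under the explicit inequality $x_1(w/x_{\max(w)})\le_{\lex}x_1x_{i_2-t}x_{i_3-t}\cdots x_{i_d-t}$. Carrying out this translation precisely, matching the rank of the cokernel against the combinatorics of $u$ and $v$, is the principal difficulty of the whole theorem.

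For part (3), assume $I=J\cap T$ has a linear resolution. Then all four ideals are linear and generated in degree $d$, so in the long exact sequence every term outside the strand $j=i+d$ vanishes and the sequence collapses to
$$0\longrightarrow\Tor_i^S(I,K)_{i+d}\longrightarrow\Tor_i^S(J,K)_{i+d}\oplus\Tor_i^S(T,K)_{i+d}\longrightarrow\Tor_i^S(I_{n,d,t},K)_{i+d}\longrightarrow0.$$
Taking dimensions yields $\beta_i(I)=\beta_i(J)+\beta_i(T)-\beta_i(I_{n,d,t})$ for all $i$. I then substitute the closed forms: formula \eqref{eq1} gives $\beta_i(J)-\beta_i(I_{n,d,t})=-\sum_{w\in M_{n,d,t}\setminus\mathcal{L}_t^i(v)}\binom{\max(w)-(d-1)t-1}{i}$, and $M_{n,d,t}\setminus\mathcal{L}_t^i(v)=\{w:w<_{\slex}v\}=\mathcal{L}_t^f(v)\setminus\{v\}$, while formula \eqref{Bettinumbersreversetspread} gives $\beta_i(T)=\sum_{w\in\mathcal{L}_t^f(u)}\binom{n-\min(w)-(d-1)t}{i}$. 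Adding the two expressions produces exactly the stated formula, so part (3) is a direct corollary of part (2) together with the Ene--Herzog--Qureshi Betti-number formulas.
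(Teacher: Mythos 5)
First, a point of comparison: the paper does not prove this theorem at all — it is imported verbatim from \cite{FC1} and used as a black box — so there is no in-paper argument to measure your proposal against. Judged on its own terms, your framework is sound and part (3) is essentially complete: $J+T=I_{n,d,t}$ because $\mathcal{L}_t^i(v)\cup\mathcal{L}_t^f(u)=M_{n,d,t}$, all three of $J$, $T$, $I_{n,d,t}$ have $d$--linear resolutions by (\ref{eq1}) and (\ref{Bettinumbersreversetspread}), and once $I=J\cap T$ is assumed to have a linear resolution the Mayer--Vietoris long exact sequence collapses to give $\beta_i(I)=\beta_i(J)+\beta_i(T)-\beta_i(I_{n,d,t})$, which after substituting the closed formulas and using $M_{n,d,t}\setminus\mathcal{L}_t^i(v)=\mathcal{L}_t^f(v)\setminus\{v\}$ is exactly the stated identity. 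That derivation of (3) is correct.

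Parts (1) and (2), however, are not proved. In each case you carry out a correct reduction --- $I=J\cap T$ if and only if $J\cap(W)\subseteq I$ with $W=\{w\in M_{n,d,t}:w<_{\slex}v\}$, and linearity of $I$ if and only if surjectivity of the comparison map $\Tor_{i+1}^S(J,K)_{i+d+1}\oplus\Tor_{i+1}^S(T,K)_{i+d+1}\to\Tor_{i+1}^S(I_{n,d,t},K)_{i+d+1}$ for all $i$ --- but the equivalence of these reduced conditions with the explicit combinatorial statements (b), (i), (ii) is precisely the content of the theorem, and you defer it (``the technical heart'', ``the principal difficulty'') rather than establish it. Nothing in the proposal explains why the mixed-order condition $x_{i_1}(w/x_s)\le_{\lex}u$ (ordinary lex, applied to a monomial that need not be $t$--spread) is the correct certificate for $J\cap(W)\subseteq I$, nor why surjectivity of the comparison map is governed by $i_2$ and by the largest $w<_{\lex}v$. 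There is also a concrete error in part (2): surjectivity of the comparison map is \emph{not} equivalent to the Betti-splitting identity $\beta_{i,j}(I_{n,d,t})=\beta_{i,j}(J)+\beta_{i,j}(T)+\beta_{i-1,j}(J\cap T)$; that identity already fails at $i=0$, $j=d$ whenever $\mathcal{L}_t(u,v)\ne\emptyset$, since $\beta_{0,d}(J)+\beta_{0,d}(T)=|M_{n,d,t}|+|\mathcal{L}_t(u,v)|$. What surjectivity actually yields is the inclusion--exclusion identity $\beta_{i,j}(J\cap T)=\beta_{i,j}(J)+\beta_{i,j}(T)-\beta_{i,j}(I_{n,d,t})$. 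In sum, the proposal gives a genuine proof of (3) conditional on (1)--(2), but the cores of (1) and (2) remain open.
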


We close the section recalling some notions about simplicial complexes.

Given $n\in\mathbb{N}$, we set $[n]=\{1,2,\dots,n\}$. We recall that
	a \textit{simplicial complex} on the \textit{vertex set} $[n]$ is a family of subsets of $[n]$ such that 
	\begin{enumerate}
		\item[-] $\{i\}\in\Delta$ for all $i\in[n]$, and 
		\item[-] if $F\subseteq[n]$, $G\subseteq F$, we have $G\in\Delta$. 
	\end{enumerate}
	
	The dimension of $\Delta$ is the number $d=\max\{|F|-1:F\in\Delta\}$. Any $F\in\Delta$ is called a \textit{face} and $|F|-1$ is the \textit{dimension} of $F$. A \textit{facet} of $\Delta$ is a maximal face with respect to the inclusion. The set of facets of $\Delta$ is denoted by $\mathcal{F}(\Delta)$.

It is well known that for any squarefree ideal ($1$--spread ideal) $I$ of $S$ there exists a unique simplicial complex $\Delta$ on $[n]$ such that $I=I_\Delta$, where
$$
I_\Delta=({\bf x}_F:F\subseteq[n],F\notin\Delta)
$$
is the Stanley--Reisner ideal of $\Delta$ \cite{JT}. In the sequel when we say that a simplicial complex $\Delta$ is associated to a squarefree ideal $I$ of $S$, we mean that $I_{\Delta}= I$.

A simplicial complex $\Delta$ on $[n]$ is called \textit{pure} if its facets have the same dimension. We say that $\Delta$ is \textit{Cohen--Macaulay} if $K[\Delta]=S/I_{\Delta}=K[x_1,\dots,x_n]/I_\Delta$ is a Cohen--Macaulay ring. If $\Delta$ is Cohen--Macaulay then $\Delta$ is pure (see, for instance, \cite{JT}).

It is well known that any monomial ideal (not necessarily squarefree) has a unique minimal primary decomposition. We refer to it as the \textit{standard primary decomposition}. Moreover, for a squarefree monomial ideal, its standard primary decomposition is $I=\bigcap_{\p\in\textup{Min}(I)}\p$, \cite[Corollary 1.3.6]{JT}, where $\text{Min}(I)$ is the set of minimal primes of $I$. Every minimal prime is a monomial prime ideal, \emph{i.e.}, $\p=(x_{i_1},x_{i_2},\dots,x_{i_s})$, for some $A=\{i_1<i_2<\dots<i_s\}\subseteq[n]$. To denote such an ideal we also use the notation $\p_A$. 
\begin{Prop}\label{prop:AlexDuality1}
	\textup{\cite[Lemma 1.5.4]{JT}} The standard primary decomposition of $I_\Delta$ is
	$$
	I_{\Delta}=\bigcap_{F\in\mathcal{F}(\Delta)}\p_{[n]\setminus F}.
	$$
\end{Prop}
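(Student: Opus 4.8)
The plan is to reduce the statement to identifying the minimal primes of $I_\Delta$, invoking the fact recalled just above (\cite[Corollary 1.3.6]{JT}) that for a squarefree monomial ideal $I$ one has $I=\bigcap_{\p\in\textup{Min}(I)}\p$, and that every minimal prime is of the form $\p_A$ for some $A\subseteq[n]$. Thus it suffices to prove that
$$
\textup{Min}(I_\Delta)=\{\p_{[n]\setminus F}:F\in\mathcal{F}(\Delta)\}.
$$

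First I would characterise which monomial primes $\p_A$ contain $I_\Delta$. Since $I_\Delta$ is generated by the monomials ${\bf x}_G$ with $G\notin\Delta$, and ${\bf x}_G\in\p_A$ if and only if $G\cap A\ne\emptyset$, we obtain $I_\Delta\subseteq\p_A$ exactly when every non-face $G$ meets $A$; equivalently, every subset $G\subseteq[n]\setminus A$ is a face of $\Delta$. Because $\Delta$ is closed under taking subsets, this last condition collapses to the single requirement $[n]\setminus A\in\Delta$. Hence $\p_A\supseteq I_\Delta$ if and only if $[n]\setminus A\in\Delta$.

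Next I would translate minimality. Since $\p_A\subseteq\p_B$ iff $A\subseteq B$, the minimal primes among those containing $I_\Delta$ correspond to the inclusion-minimal sets $A$ with $[n]\setminus A\in\Delta$, that is, to the inclusion-maximal faces $[n]\setminus A$ of $\Delta$---precisely the facets. Writing $F=[n]\setminus A$, this yields $\textup{Min}(I_\Delta)=\{\p_{[n]\setminus F}:F\in\mathcal{F}(\Delta)\}$, and substituting into $I_\Delta=\bigcap_{\p\in\textup{Min}(I_\Delta)}\p$ gives the claimed decomposition.

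The only real care needed---hence the nearest thing to an obstacle---is the two-fold order reversal at play: passing from generators to containment turns ``$G$ contained in a non-face region $[n]\setminus A$'' into ``$G$ meets $A$'', while the complementation $A\mapsto[n]\setminus A$ reverses inclusions a second time, so that minimal primes match maximal faces rather than minimal ones. Once the closure of $\Delta$ under subsets is used to reduce the condition ``all subsets of $[n]\setminus A$ are faces'' to the single membership ``$[n]\setminus A\in\Delta$'', everything else is routine bookkeeping.
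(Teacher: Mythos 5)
Your argument is correct and is the standard proof of this fact; the paper gives no proof of its own here, simply citing \cite[Lemma 1.5.4]{JT}. Your reduction to identifying $\Min(I_\Delta)$ with $\{\p_{[n]\setminus F}:F\in\mathcal{F}(\Delta)\}$ via the equivalence $I_\Delta\subseteq\p_A\Leftrightarrow[n]\setminus A\in\Delta$ (using closure of $\Delta$ under subsets) is exactly how the cited reference proceeds.
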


Given two positive integers $j\ge k$, we set $[j,k]=\{\ell\in\mathbb{N}:j\le\ell\le k\}$ and we associate to a $t$--spread monomial two special sets of integers.
	
	Let $t\ge1$ and let $w=x_{\ell_1}x_{\ell_2}\cdots x_{\ell_d}$ be a $t$--spread monomial of $S$ of degree $d$. 
	
	If $\max(w)\le n+1-t$, we define the \textit{$t$--spread support} of $w$ as follows:
\begin{equation}
	\label{eq:suppt}
	\supp_t(w)=\bigcup_{r=1}^d[\ell_r,\ell_r+(t-1)];
	\end{equation}
	whereas, if $\min(w)\ge t$, we define the \textit{$t$--spread cosupport} of $w$ as follows:
	\begin{equation}
	\label{eq:cosuppt}\cosupp_t(w)=\bigcup_{r=1}^{d}[\ell_r-(t-1),\ell_r].
	\end{equation}
	
	Moreover, we set $\supp_t(1)=\cosupp_t(1)=\emptyset$.

\begin{Rem}\em Let $w=x_2x_5x_{10}x_{13}\in S=K[x_1, \ldots, x_{13}]$ be a 3--spread monomial. One can note that $\max(w)=13 >13+1-3=11$. Furthermore, $\supp_3(w/x_{\max(w)})=\{2,3,4,5,6,7,10,11,12\}$ and $\cosupp_3(w/x_{\min(w)})=\{3,4,5,8,9,10,11,12,13\}$. 
	
Observe that for all $w\in M_{n,d,t}$, we can always compute $\supp_t(w/x_{\max(w)})$ and $\cosupp_t(w/x_{\min(w)})$.
\end{Rem}

In \cite[Theorem 2.3]{EHQ}, Ene, Herzog, Qureshi computed the standard primary decomposition of the $t$--spread Veronese ideal of degree $d$, $I_{n,d,t}$,
\begin{equation}\label{eq:decVeronese}
I_{n,d,t}=\bigcap_{D\in\mathcal{D}}\p_{[n]\setminus D},
\end{equation}
where
$$
\mathcal{D}=\Big\{\bigcup_{r=1}^{d-1}[\ell_r,\ell_r+(t-1)]\subseteq[n]\ :\ \ell_{r+1}-\ell_{r}\ge t,\ r=1,\dots,d-2\Big\}.
$$
Using our notation, each set $D\in\mathcal{D}$ may be written as $\supp_t(x_{\ell_1}\cdots x_{\ell_{d-1}})$, with 
$x_{\ell_1}\cdots x_{\ell_{d-1}}\in M_{n+1-t,d-1,t}$. Hence, the  primary decomposition (\ref{eq:decVeronese}) becomes
\begin{equation}\label{eq:primdecompIndt}
I_{n,d,t}=\bigcap_{w\in M_{n+1-t,d-1,t}}\p_{[n]\setminus\supp_t(w)}.
\end{equation}
In particular, $I_{n,d,t}$ is an \textit{height--unmixed} ideal, \emph{i.e.}, all its minimal primes have the same height, $n-(d-1)t$ \cite[Theorem 2.3]{EHQ}.
Furthermore, $I_{n,d,t}$ is a completely $t$--spread lexsegment ideal \cite{FC1}.\\

\section{The primary decomposition of\\ completely $t$--spread lexsegment ideals}\label{sec2}

In this section we study the primary decomposition of a completely $t$--spread lexsegment ideal of $S=K[x_1, \ldots, x_n]$. The case $t=1$ has been analyzed in \cite{OO}. The sets defined in (\ref{eq:suppt}) and (\ref{eq:cosuppt}) will be pivotal for our aim.\\

We start the section with some comments and remarks.

Let $n,d,t\ge 1$, $u=x_{i_1}x_{i_2}\cdots x_{i_d}$ and 
$v=x_{j_1}x_{j_2}\cdots x_{j_d}$ monomials of $M_{n,d,t}$, with $u\ge_{\slex}v$. 
Set $L=\mathcal{L}_t(u,v)$, $I=(L)$. We assume that $I$ is a completely $t$--spread lexsegment ideal, \emph{i.e.}, $I\ =\ J\cap T$,
with $J=(\mathcal{L}_t^i(v))$ and $T=(\mathcal{L}_t^f(u))$.

\begin{enumerate}
	\item[-] If $u=v$, $I=(L)=(u)$ is a principal ideal, and its standard primary decomposition is $I=(x_{i_1})\cap\dots\cap(x_{i_d})$. Therefore, we may assume $u>_{\slex}v$.
	\item[-] If $\deg(u)=\deg(v)=d=1$, then $I=(L)=(x_{i_1},x_{i_1+1},\dots,x_{{j_1}-1},x_{j_1})=\p_{[i_1,j_1]}$ is the standard primary decomposition of $I$. Thus, we may assume $d\ge2$.
	\item[-] If $I$ is initial, \emph{i.e.}, $I=J=(\mathcal{L}_t^i(v))$, we may assume $\min(v)=j_1\ge 2$, otherwise
	$$
	J=(x_1)\cap(\mathcal{L}_t(x_{1+t}\cdots x_{1+(d-1)t},v/x_{1})),
	$$
and to determine the standard primary decomposition of $J$, it suffices to determine that of $(\mathcal{L}_t(x_{1+t}\cdots x_{1+(d-1)t},v/x_{j_1}))$ which is an initial $t$--spread lexsegment ideal in the polynomial ring $K[x_{1+t},\dots,x_n]$ with fewer indeterminates than $S$.
	\item[-] If $I$ is final, \emph{i.e.}, $I=T=(\mathcal{L}_t^f(u))$, we can assume $\min(u)=i_1=1$. Indeed, if $i_1>1$, none of the variables $x_1,\dots,x_{i_1-1}$ divides any minimal monomial generator of $T$. So computing the standard primary decomposition of $T$ is equivalent to computing that of $T\cap K[x_{i_1},\dots,x_n]$ in the polynomial ring $K[x_{i_1},\dots,x_n]$ with fewer indeterminates than $S$.
\end{enumerate}
Hence, from now on, we assume that $I=(\mathcal{L}_t(u,v))=J\cap T$, with $u>_{\slex}v$, $\min(u)=1$, $\min(v) \ge 2$ and $\deg(u)=\deg(v)=d\ge 2$.\\

To compute the standard primary decomposition of $I=J\cap T$, we can proceed as follows. 
Firstly, we determine the decomposition of $J$, and if $I=J$ we are done. Secondly, we determine that of $T$, and if $I=T$ we are done. Finally, knowing the primary decompositions of $J$ and $T$ we take into account the intersection $J\cap T=I$, and deleting the non minimal primes, we obtain the standard primary decomposition of $I$ in the general case.

\subsection{The initial $t$--spread lexsegment case}
In this subsection we analyze the case of the standard primary decomposition of an initial $t$--spread lexsegment ideal of $S=K[x_1,\dots,x_n]$.

\begin{Thm}\label{primdecompinitialtspreadlex}
Let $v=x_{j_1}x_{j_2}\dots x_{j_d}$ be a $t$--spread monomial of degree $d\ge2$ of $S$ with $j_1\ge 2$ 
and let $J=(\mathcal{L}_t^i(v))$ be an initial $t$--spread lexsegment ideal of $S$. Then, the standard primary decomposition of $J$ is
$$
J\ =\ \bigcap_{p=1}^d\p_{F_p}\cap\bigcap_{F\in\mathcal{F}}\p_{[n]\setminus F},
$$
where
\begin{align*}
F_p\ &=[j_p]\setminus\supp_t(x_{j_1}x_{j_2}\cdots x_{j_{p-1}}),\ p=1,\dots,d,\\
\mathcal{F}\ &=\ \big\{\supp_t(w):w\in M_{n+1-t,d-1,t},\ w >_{\slex} v/x_{\max(v)}\big\}.
\end{align*}
\end{Thm}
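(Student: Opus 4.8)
The plan is to use that $J$ is a squarefree (indeed $t$--spread) monomial ideal, so by Proposition \ref{prop:AlexDuality1} its standard primary decomposition is the intersection of its minimal primes $\p_A$, these being exactly the monomial primes minimal among those containing $J$. Hence it suffices to establish three facts: (I) each prime on the right--hand side contains $J$; (II) the intersection of those primes is contained in $J$; and (III) the listed primes are pairwise incomparable. Granting (I)--(III), the usual argument (any prime containing $J=\bigcap_i\p_i$ contains some $\p_i$, combined with incomparability) shows that the $\p_{F_p}$ and the $\p_{[n]\setminus F}$, $F\in\mathcal F$, are precisely the minimal primes of $J$.

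For (I), the primes $\p_{[n]\setminus F}$ with $F=\supp_t(w)\in\mathcal F$ occur among the minimal primes of $I_{n,d,t}$ in (\ref{eq:primdecompIndt}); since every generator of $J$ is a $t$--spread monomial of degree $d$ we have $J\subseteq I_{n,d,t}$, whence $J\subseteq\p_{[n]\setminus F}$. For $\p_{F_p}$ I would check that each generator $w=x_{k_1}\cdots x_{k_d}\ge_{\slex}v$ meets $F_p$: letting $s$ be the first position at which $w$ and $v$ differ (and $s=d+1$ if $w=v$), the coordinate $k_s$ (when $s\le p$) or $k_p=j_p$ (when $s>p$) lies in $[j_p]$ and, by the $t$--spread property, outside $\supp_t(x_{j_1}\cdots x_{j_{p-1}})$, so it belongs to $F_p$.

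The heart of the matter is (II), which I would prove by contraposition. Given a squarefree monomial $\mathbf x_B$, $B\subseteq[n]$, with $\mathbf x_B\notin J$, I produce a listed prime avoiding it. Let $w^\ast=x_{c_1}\cdots x_{c_e}$ be the greedy $\ge_{\slex}$--largest $t$--spread subset of $B$ (take $c_1=\min B$ and repeatedly the smallest admissible index). A short exchange argument gives $B\subseteq\bigcup_{r=1}^{e}[c_r,c_r+t-1]=\supp_t(w^\ast)$ and shows that $\mathbf x_B\in J$ precisely when $e\ge d$ and $x_{c_1}\cdots x_{c_d}\ge_{\slex}v$. Comparing $w^\ast$ with $v$ at their first differing position $s$ then splits into three cases. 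If $c_s>j_s$, the greedy construction forces $B\cap[1,c_s-1]\subseteq\supp_t(x_{j_1}\cdots x_{j_{s-1}})$, so $B\cap F_s=\emptyset$ and $\mathbf x_B\notin\p_{F_s}$. If $c_i=j_i$ for all $i\le e$ but $e<d$, then $B\subseteq\supp_t(x_{j_1}\cdots x_{j_e})$ yields $B\cap F_{e+1}=\emptyset$. Finally, if $c_s<j_s$ (which, because $\mathbf x_B\notin J$, again forces $e<d$), then $B$ is covered by a degree--$(d-1)$ $t$--spread monomial whose first deviation from $v/x_{\max(v)}$ lies below it, hence is $>_{\slex}v/x_{\max(v)}$; its $t$--spread support is a face $F\in\mathcal F$ with $B\subseteq F$, so $\mathbf x_B\notin\p_{[n]\setminus F}$.

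The main obstacle is this last case: one must realize the cover of $B$ as a genuine element of $M_{n+1-t,d-1,t}$ — of degree exactly $d-1$ and with all indices at most $n+1-t$ — while keeping it strictly $\ge_{\slex}$--above $v/x_{\max(v)}$. Since naively padding $w^\ast$ at the top can violate the index bound, I would instead invoke (\ref{eq:primdecompIndt}): as $\mathbf x_B\notin I_{n,d,t}$, some $w\in M_{n+1-t,d-1,t}$ covers $B$, and I would work with the $\ge_{\slex}$--largest such cover $\hat w$, proving that $c_s<j_s$ forces $\hat w>_{\slex}v/x_{\max(v)}$. As for (III), the incomparability is routine: the $\p_{[n]\setminus F}$ are pairwise incomparable as distinct minimal primes of $I_{n,d,t}$; for the $\p_{F_p}$ one argues directly (for instance $j_p\in F_p$ while $j_p\notin F_{p+1}$); and the mixed pairs are disposed of by verifying $F_p\cap F\ne\emptyset$ and $[n]\setminus F\not\subseteq F_p$, using $|F_p|=j_p-(p-1)t$ and $|[n]\setminus F|=n-(d-1)t$.
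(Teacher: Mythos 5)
Your proposal is correct and follows essentially the same route as the paper's proof: both reduce, via Proposition \ref{prop:AlexDuality1}, to identifying the facets of the Stanley--Reisner complex of $J$, both extract the same greedy $t$--spread subsequence $c_1<c_2<\cdots$ from a face $B$, and both split into the same cases according to the first index at which it deviates from $v$ (your (I)+(III) merely replace the paper's direct check that each listed set is a maximal face). The one sub-claim you leave open in Case C does hold, by a short argument: \emph{every} cover $x_{m_1}\cdots x_{m_{d-1}}\in M_{n+1-t,d-1,t}$ of $B$ satisfies $m_i\le c_i$ for $i\le\min(e,d-1)$ (since $c_i\ge m_{i-1}+t$ forces $c_i$ to be covered by an interval of index $\ge i$), so $c_s<j_s$ makes every cover --- in particular your $\hat w$, which exists by (\ref{eq:primdecompIndt}) --- strictly greater than $v/x_{\max(v)}$ in the squarefree lex order; this is precisely the step the paper itself passes over with ``we can quickly find a facet $F\in\mathcal{F}$ which contains $\bigcup_{i=1}^{k}[\ell_i,\ell_i+(t-1)]$''.
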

\begin{proof}
	Let $\Delta$ be the simplicial complex  on the vertex set $[n]$ associated to $J$. 
	From Proposition \ref{prop:AlexDuality1}, we have to prove that the facets of $\Delta$ are exactly the sets $[n]\setminus F_{p}$, $p=1,\dots,d$, together with the sets of $\mathcal{F}$, \emph{i.e.},
	\begin{equation}\label{facetprimdecomplexinit}
	\mathcal{F}(\Delta)=\big\{[n]\setminus F_{p}:p=1,\dots,d\big\}\cup\mathcal{F}.
	\end{equation}
	Observe that if $G\in\Delta$, $G\ne\emptyset$, then $G$ is a facet of $\Delta$ if and only if $G\cup\{i\}\notin\Delta$, for all $i\in[n]\setminus G$, \emph{i.e.}, if and only if ${\bf x}_{G\cup\{i\}}\in J$, for all $i\in[n]\setminus G$.\smallskip
	
	We show that each $F\in\mathcal{F}$ is a facet of $\Delta$. Let $F\in\mathcal{F}$, then
	\begin{align*}
	F\ &=\ \supp_t(x_{\ell_1}\cdots x_{\ell_{d-1}})\ =\ \bigcup_{r=1}^{d-1}[\ell_r,\ell_r+(t-1)]\\
	&=\ \big\{\ell_1,\ell_1+1,\dots,\ell_1+(t-1),\ \dots,\ \ell_{d-1},\ell_{d-1}+1,\dots,\ell_{d-1}+(t-1) \big\},
	\end{align*}
	with $x_{\ell_1}\cdots x_{\ell_{d-1}}\in M_{n+1-t,d-1,t}$ and $x_{\ell_1}\cdots x_{\ell_{d-1}}>_{\slex}v/x_{\max(v)}=x_{j_1}\cdots x_{j_{d-1}}$.\medskip
	
	Clearly, ${\bf x}_F\notin J$, since  
	${\bf x}_F$ is not a multiple of any $t$--spread monomial of degree $d$ of $S$. To prove that $F$ is a facet, it suffices to show that ${\bf x}_{F\cup\{i\}}\in J$, for all $i\in[n]\setminus F$.\medskip
	
	Let $i\in[n]\setminus F$. We have: 
	\begin{enumerate}
		\item[-] if $i<\ell_1$, then ${\bf x}_{F\cup\{i\}}$ is divided by the $t$--spread monomial $x_ix_{\ell_1+(t-1)}\cdots x_{\ell_{d-1}+(t-1)}$;
		\item[-] if $\ell_j+(t-1)<i<\ell_{j+1}$, for some $1\le j\le d-2$, then ${\bf x}_{F\cup\{i\}}=$ ${\bf x}_Fx_i$ is divided by the $t$--spread monomial $\big(\prod\limits_{r=1}^jx_{\ell_r}\big)x_i\big(\prod\limits_{r=j+1}^{d-1}x_{\ell_r+(t-1)}\big)$;
		\item[-] if $\ell_{d-1}+(t-1)<i$, then ${\bf x}_{F\cup\{i\}}$ is is divided by the monomial $x_{\ell_1}\cdots x_{\ell_{d-1}}x_i$.
	\end{enumerate}
	
	More in detail, in every case, ${\bf x}_{F\cup\{i\}}$ is a multiple of a $t$--spread monomial of degree $d$ strictly greater than $v$, with respect to $>_{\slex}$. Therefore ${\bf x}_{F\cup\{i\}}\in J$ and $F$ is a facet.\medskip
	
	Now, we determine all the facets of $\Delta$. Let $G\in\Delta$ and define the following integers:
	\begin{enumerate}
	\item[] $\ell_1=\min(G)$, and
	\item[] $\ell_i=\min\big\{r\in G:r\ge\ell_{i-1}+t\big\}$, for $i\ge 2$.
	\end{enumerate}
	Let $\ell_k$ be the last element of the sequence $\ell_1 < \ell_2 < \cdots$.\\
	If $\ell_i\le j_i$ for all $i$, then $k\le d-1$, otherwise $x_{\ell_1}x_{\ell_2}\cdots x_{\ell_d}\in J$. Against the fact that $G\in\Delta$. Then,
	\begin{align*}
	G\ &\subseteq\ \bigcup_{i=1}^{k}[\ell_i,\ell_i+(t-1)]\cup\bigcup_{i=k+1}^{d-1}[j_i,j_i+(t-1)]\\
	&=\ \supp_t(x_{\ell_1}\cdots x_{\ell_k}x_{j_{k+1}}\cdots x_{j_{d-1}})=F \in\mathcal{F}.
	\end{align*}
	
	Assume there exists an integer $1\le p\le d$ such that $\ell_p>j_p$ and let $p$ be minimal for such a property. By the meaning of $p$, for $q<p$, $\ell_q\le j_q$. We need to distinguish two cases.\\
	\textsc{Case 1.} There exists an integer $q<p$ such that $\ell_q<j_q$.\\
	In such a case, $x_{\ell_1}x_{\ell_2}\cdots x_{\ell_{p-1}}>_{\slex}x_{j_1}x_{j_2}\cdots x_{j_{p-1}}$. Moreover, $k\le d-1$, otherwise $x_{\ell_1}x_{\ell_2}\cdots x_{\ell_{p-1}}x_{\ell_p}\cdots x_{\ell_d}>_{\slex}v$ and so ${\bf x}_G\in J$. A contradiction.
	Hence, 
	$$
	G\subseteq\bigcup_{i=1}^{k}[\ell_i,\ell_i+(t-1)]
	$$
	and we can quickly find a facet $F\in\mathcal{F}$ which contains $\bigcup_{i=1}^{k}[\ell_i,\ell_i+(t-1)]$ and consequently $G$.\\
	\textsc{Case 2.} For all integers $q<p$, $\ell_q=j_q$.\\
		In such a case, since $\ell_p>j_p$, we have $x_{j_1}\cdots x_{j_p}>_{\slex}x_{\ell_1}\cdots x_{\ell_p}$, and
	$$
	G\subseteq\bigcup_{i=1}^{p-1}[j_i,j_i+(t-1)]\cup[j_p+1,n]=[n]\setminus F_p
	$$
	and $[n]\setminus F_p$ is clearly  a facet. 
	Finally, all the facets of $\Delta$ are those described in (\ref{facetprimdecomplexinit}).
	
\end{proof}

\begin{Rem}\label{Rem:FacetsIniTSpread}
		\rm We note that for an initial $t$--spread lexsegment ideal $J=I_\Delta$, all the facets $F\in\mathcal{F}(\Delta)$ have cardinality $|F|\ge(d-1)t$. Indeed, from Theorem \ref{primdecompinitialtspreadlex}, $\mathcal{F}(\Delta)=\big\{[n]\setminus F_{p}:p=1,\dots,d\big\}\cup\mathcal{F}$. If $F\in\mathcal{F}$, then $|F|=(d-1)t$. Otherwise, if $F=[n]\setminus F_p$ for some $p$, then $|F|=|[n]\setminus F_p|=n-|F_p|=n-(j_p-(p-1)t)$. We observe that $j_p\le n-(d-p)t$. Thus
		$$
		|F|=n+(p-1)t-j_p\ge n+(p-1)t-n+(d-p)t=(d-1)t,
		$$
		as desired.
\end{Rem}

We illustrate the previous result with an example.
\begin{Expl}\label{ex:primdecompJ}
	\rm Let $v=x_2x_5x_7\in S=\mathbb{Q}[x_1,\dots,x_7]$ be a $2$--spread monomial and consider the initial $2$--spread ideal $J=(\mathcal{L}_2^i(v))$ of $S$:
	$$
	J=(x_{1}x_{3}x_{5},x_{1}x_{3}x_{6},x_{1}x_{3}x_{7},x_{1}x_{4}x_{6},x_{1}x_{4}x_{7},x_{1}x_{5}x_{7},
	x_{2}x_{4}x_{6},x_{2}x_{4}x_{7},x_{2}x_{5}x_{7}).
	$$
	We have $n=7,d=3,t=2$. The monomials $w\in M_{n+1-t,d-1,t}=M_{6,2,2}$ such that $w>_{\slex}v/x_{\max(v)}=x_2x_5$ are the following
	$$
	x_1x_3,\ \ x_1x_4,\ \ x_1x_5,\ \ x_1x_6,\ \ x_2x_4.
	$$
	Therefore,
	\begin{align*}
	\mathcal{F}\ &=\ \big\{\supp_2(x_1x_3),\supp_2(x_1x_4),\supp_2(x_1x_5),\supp_2(x_1x_6),\supp_2(x_2x_4)\big\}\\
	&=\ \big\{\{1,2,3,4\},\{1,2,4,5\},\{1,2,5,6\},\{1,2,6,7\},\{2,3,4,5\}\big\}.
	\end{align*}
	Moreover, $v=x_{j_1}x_{j_2}x_{j_3}=x_2x_5x_7$, hence
	\begin{align*}
	F_1&=[j_1]\setminus\supp_2(1)=[2]\setminus\emptyset=\{1,2\},\\
	F_2&=[j_2]\setminus\supp_2(x_{j_1})=[5]\setminus\{2,3\}=\{1,4,5\},\\
	F_3&=[j_3]\setminus\supp_2(x_{j_1}x_{j_2})=[7]\setminus\{2,3,5,6\}=\{1,4,7\}.
	\end{align*}
	Finally, the standard decomposition of $J$ is
	\begin{align*}
	J&=(x_{1},x_{2})\cap (x_{1},x_{4},x_{5})\cap(x_{1},x_{4},x_{7})\cap(x_{1},x_{6},x_{7})\cap(x_{3},x_{4},x_{5})\\
	&\phantom{=..}\cap
	(x_{3},x_{4},x_{7})\cap(x_{3},x_{6},x_{7})\cap(x_{5},x_{6},x_{7}).
	\end{align*}
\end{Expl}

Here are some corollaries of Theorem \ref{primdecompinitialtspreadlex}.
\begin{Cor}\label{cor:JinitialInvariants}
	In the hypotheses of Theorem \ref{primdecompinitialtspreadlex},  $\pd(S/J)=n-(d-1)t$, $\depth(S/J)=(d-1)t$ and $\dim(S/J)=n-j_1$.
\end{Cor}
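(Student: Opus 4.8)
The plan is to extract all three invariants from the two structural inputs already in hand: the standard primary decomposition of $J$ from Theorem \ref{primdecompinitialtspreadlex} gives the Krull dimension, the Ene--Herzog--Qureshi Betti number formula (\ref{eq1}) gives the projective dimension, and the depth then follows at once from the Auslander--Buchsbaum formula.

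First I would compute $\dim(S/J)$ through $\height(J) = \min\{\height\p : \p\in\Min(J)\}$, since $\dim(S/J) = n - \height(J)$ in the polynomial ring $S$. By Theorem \ref{primdecompinitialtspreadlex} the minimal primes are the $\p_{F_p}$ and the $\p_{[n]\setminus F}$ with $F\in\mathcal{F}$. For the former, $\supp_t(x_{j_1}\cdots x_{j_{p-1}})$ is a disjoint union of $p-1$ length-$t$ intervals contained in $[j_p]$ (using $j_p - j_{p-1}\ge t$), so $\height\p_{F_p} = |F_p| = j_p - (p-1)t$; for the latter, $\height\p_{[n]\setminus F} = n-(d-1)t$. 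Because $v$ is $t$--spread we have $j_p\ge j_1+(p-1)t$, hence $j_p-(p-1)t\ge j_1$ for every $p$, with equality at $p=1$; and from $j_d\le n$ together with $j_d\ge j_1+(d-1)t$ we get $n-(d-1)t\ge j_1$. Thus the minimum height among all minimal primes is $j_1$, so $\height(J)=j_1$ and $\dim(S/J)=n-j_1$.

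Next I would compute $\pd(S/J)$. Since $J$ is an initial $t$--spread lexsegment ideal it is $t$--spread strongly stable, so (\ref{eq1}) applies; as every element of $G(J)=\mathcal{L}_t^i(v)$ has degree $d$, the only possibly nonzero graded Betti numbers are $\beta_{i,i+d}(J)=\sum_{u\in\mathcal{L}_t^i(v)}\binom{\max(u)-t(d-1)-1}{i}$. All summands are nonnegative, so no cancellation can occur and $\beta_i(J)\ne0$ holds exactly when $i\le \max(u)-t(d-1)-1$ for some generator $u$, whence $\pd(J)=\big(\max_{u\in\mathcal{L}_t^i(v)}\max(u)\big)-t(d-1)-1$. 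The crucial point, and the place where the hypothesis $j_1\ge2$ is used, is that $\max(u)$ attains the value $n$ on $\mathcal{L}_t^i(v)$: the monomial $x_1x_{1+t}\cdots x_{1+(d-2)t}x_n$ is $t$--spread (as $n\ge1+(d-1)t$), has first index $1<j_1$ and so lies in $\mathcal{L}_t^i(v)$, and has maximal index $n$. Therefore $\pd(J)=n-t(d-1)-1$ and $\pd(S/J)=\pd(J)+1=n-(d-1)t$.

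Finally, the depth is immediate: by the Auslander--Buchsbaum formula $\depth(S/J)=\depth(S)-\pd(S/J)=n-(n-(d-1)t)=(d-1)t$. The only genuinely computational step is the height comparison in the first paragraph, where one must check both that the $\p_{F_p}$ have height at least $j_1$ and that the $\p_{[n]\setminus F}$ never drop below $j_1$; both reduce directly to the defining $t$--spread inequalities on the exponents of $v$. Everything else is a routine invocation of the quoted formulas, so I expect no serious obstacle beyond bookkeeping.
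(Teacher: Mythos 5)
Your proposal is correct and follows essentially the same route as the paper: the dimension is read off from the heights of the minimal primes supplied by Theorem \ref{primdecompinitialtspreadlex}, the projective dimension from the Ene--Herzog--Qureshi formula (\ref{eq1}), and the depth from Auslander--Buchsbaum. You merely spell out details the paper leaves implicit, such as exhibiting the generator $x_1x_{1+t}\cdots x_{1+(d-2)t}x_n$ with maximal index $n$ to pin down $\pd(J)$.
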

\begin{proof}
	Since $J$ is a $t$--spread strongly stable ideal, by (\ref{eq1}), $\pd(S/J)=\pd(J)+1=n-(d-1)t$. So, by the Auslander--Buchsbaum formula, $\depth(S/J)=\depth(S)-\pd(S/J)=n-(n-(d-1)t)=(d-1)t$.\smallskip
	
	Let $\Delta$ be the simplicial complex associated to $J$. By Theorem \ref{primdecompinitialtspreadlex}, the facets of $\Delta$ are those described in (\ref{facetprimdecomplexinit}), and so, since $j_1\le n-(d-1)t$, and $j_1+(i-1)t\le j_i$, for $i=2,\dots,d$, we have
	\begin{align*}
	\alt(J)\ &=\ \min\big\{ n-(d-1)t, |F_\ell|\ :\ \ell=1,\dots,d\big\}\\
	&=\ \min\big\{ n-(d-1)t,j_1,j_2-t,\dots,j_{d}-(d-1)t\big\}\ =\ j_1.
	\end{align*}
	Therefore, $\dim(K[\Delta])=\dim(S/J)=n-j_1$.
\end{proof}
\begin{Cor}\label{cor:JCohenMac}
	An initial $t$--spread lexsegment ideal generated in degree $d\ge2$ is Cohen--Macaulay if and only if it is the $t$--spread Veronese ideal of degree $d$.
\end{Cor}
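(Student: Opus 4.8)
The plan is to reduce the whole statement to the invariant formulas already established in Corollary \ref{cor:JinitialInvariants}. Recall that $S/J$ is Cohen--Macaulay exactly when $\depth(S/J)=\dim(S/J)$. By Corollary \ref{cor:JinitialInvariants} we have $\depth(S/J)=(d-1)t$, a quantity depending only on $d$ and $t$, while $\dim(S/J)=n-j_1$. Hence the Cohen--Macaulay property is equivalent to the single numerical equation
\[
(d-1)t\ =\ n-j_1,\qquad\text{i.e.}\qquad j_1\ =\ n-(d-1)t.
\]
So the corollary amounts to identifying precisely those $v$ whose smallest index $j_1$ attains the value $n-(d-1)t$.

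The second step is to show that this value of $j_1$ rigidly determines $v$ to be the minimum of $M_{n,d,t}$. Since $v=x_{j_1}x_{j_2}\cdots x_{j_d}$ is $t$--spread, we have $j_{i+1}\ge j_i+t$ for all $i$, whence $j_i\ge j_1+(i-1)t=n-(d-i)t$ for each $i$; in particular $j_d\ge n$, which forces $j_d=n$ because $j_d\le n$. Feeding $j_d=n$ back up the chain of inequalities gives equality at every step, so $j_i=n-(d-i)t$ for all $i$, that is,
\[
v\ =\ x_{n-(d-1)t}x_{n-(d-2)t}\cdots x_n\ =\ \min(M_{n,d,t}).
\]
Therefore $J=(\L_t^i(\min(M_{n,d,t})))$ contains every $t$--spread monomial of degree $d$, i.e. $J=I_{n,d,t}$. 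For the converse, the $t$--spread Veronese ideal is by definition generated by all of $M_{n,d,t}$, hence $I_{n,d,t}=(\L_t^i(\min(M_{n,d,t})))$, whose defining monomial starts at $j_1=n-(d-1)t$; this meets the numerical equation above and so gives a Cohen--Macaulay quotient. (Alternatively, the height-unmixedness of $I_{n,d,t}$ recorded after (\ref{eq:primdecompIndt}), with every minimal prime of height $n-(d-1)t$, yields $\dim(S/I_{n,d,t})=(d-1)t=\depth(S/I_{n,d,t})$ directly.)

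I do not anticipate a genuine obstacle here: the substantive work — the formulas for $\depth$ and $\dim$ — has already been carried out in Corollary \ref{cor:JinitialInvariants}, and once the Cohen--Macaulay test is rephrased as the condition $j_1=n-(d-1)t$, the only remaining ingredient is the elementary rigidity argument showing that this minimal starting index pins $v$ down to the unique smallest $t$--spread monomial. The sole point requiring care is the bookkeeping of the $t$--spread inequalities $j_{i+1}\ge j_i+t$ when running the chain backwards from $j_d=n$.
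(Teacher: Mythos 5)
Your proof is correct, and for the ``only if'' direction it runs by a genuinely different mechanism than the paper's. The paper argues: Cohen--Macaulayness forces the associated complex $\Delta$ to be pure, and by the facet description of Theorem \ref{primdecompinitialtspreadlex} purity forces all the cardinalities $n-(j_p-(p-1)t)$ to equal $(d-1)t$ simultaneously, which pins down every $j_p$ at once. You instead invoke the exact criterion $\depth(S/J)=\dim(S/J)$ together with both formulas of Corollary \ref{cor:JinitialInvariants}, reducing everything to the single equation $j_1=n-(d-1)t$, and then recover the remaining indices from the rigidity of the $t$--spread inequalities $j_{i+1}\ge j_i+t$ played against $j_d\le n$. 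The two routes rest on the same machinery --- Corollary \ref{cor:JinitialInvariants} is itself deduced from Theorem \ref{primdecompinitialtspreadlex} and the Betti number formula (\ref{eq1}) --- but yours uses only the height (the minimal facet cardinality) plus the depth formula, at the price of the extra chain-of-inequalities step, whereas the paper's purity argument reads off all the $j_p$ directly but, being only a necessary condition, must still quote Corollary \ref{cor:JinitialInvariants} for the converse exactly as you do. One shared caveat: both arguments implicitly operate under the standing reduction $\min(v)=j_1\ge2$ from the beginning of Section \ref{sec2}, since Theorem \ref{primdecompinitialtspreadlex} and Corollary \ref{cor:JinitialInvariants} are stated under that hypothesis; this affects neither proof, but it is the hypothesis under which your equivalence ``Cohen--Macaulay iff $j_1=n-(d-1)t$'' is licensed.
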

\begin{proof}
	Let $J=(\mathcal{L}_t^i(v))$, $v=x_{j_1}x_{j_2}\cdots x_{j_d}$ and let $\Delta$ be the simplicial complex associated to $J$. Assume $J$ is Cohen--Macaulay, then $\Delta$ is pure. By Theorem \ref{primdecompinitialtspreadlex}, the facets of $\Delta$ are those described in (\ref{facetprimdecomplexinit}), and so $\Delta$ is pure if and only if
	$$
	(d-1)t=n-j_1=n-(j_2-t)=\cdots=n-(j_d-(d-1)t).
	$$
	Hence $v=x_{n-(d-1)t}x_{n-(d-2)t}\cdots x_{n-t}x_n$ and $J=I_{n,d,t}$. Conversely, $I_{n,d,t}$ is Cohen--Macaulay. Indeed, $\dim(S/J)=\textup{depth}(S/J)=(d-1)t$ (Corollary \ref{cor:JinitialInvariants}).
\end{proof}
\subsection{The final $t$--spread lexsegment case}

In this subsection we determine the standard primary decomposition of a final $t$--spread lexsegment ideal of $S=K[x_1,\dots,x_n]$.

The next result will be crucial.

\begin{Prop}\label{propcardfacetslexfin}
Let $u$ be a $t$--spread monomial of degree $d$ of $S$ such that $\min(u)=1$ and let $T=(\mathcal{L}_t^f(u))$ be a final $t$--spread lexsegment of $S$. 
Let $\Delta$ be the simplicial complex on $[n]$ associated to $T$. 
Then $|F|\in\{(d-1)t,1+(d-1)t\}$, for all $F\in\mathcal{F}(\Delta)$.
\end{Prop}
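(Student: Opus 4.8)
The plan is to read the two possible facet cardinalities off two matching estimates for $\dim(S/\p)$, where $\p$ ranges over the minimal primes of $T$. By Proposition \ref{prop:AlexDuality1} these minimal primes are exactly the ideals $\p_{[n]\setminus F}$ with $F\in\mathcal{F}(\Delta)$, and $\dim(S/\p_{[n]\setminus F})=|F|$, so bounding all the $\dim(S/\p)$ between $(d-1)t$ and $1+(d-1)t$ is equivalent to the claim. The computation that feeds everything is $\depth(S/T)$. Since $T$ is $t$--spread strongly stable for the reversed order and is generated in the single degree $d$, formula (\ref{Bettinumbersreversetspread}) shows that the top nonzero Betti number of $T$ sits in homological degree $i=n-\min(w)-t(d-1)$ for the generator $w$ of least $\min$; because $\min(u)=1$ forces this least value to be $1$, we get $\pd(T)=n-1-(d-1)t$, hence $\pd(S/T)=n-(d-1)t$, and by Auslander--Buchsbaum $\depth(S/T)=(d-1)t$ (exactly as in Corollary \ref{cor:JinitialInvariants}).

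For the upper bound $|F|\le 1+(d-1)t$ I would argue directly on the faces. If $F\in\Delta$, then $\mathbf{x}_F\notin T$, so no $t$--spread degree--$d$ submonomial of $\mathbf{x}_F$ lies in $\mathcal{L}_t^f(u)$. The key observation is that, because $\min(u)=1$, any $t$--spread degree--$d$ monomial $w$ with $1\notin\supp(w)$ satisfies $w<_{\slex}u$ and hence belongs to $\mathcal{L}_t^f(u)$; therefore a face $F$ cannot contain any $t$--spread $d$--subset avoiding the vertex $1$, i.e.\ $F\setminus\{1\}$ has no $t$--spread $d$--subset. A short greedy lemma --- a subset of $\mathbb{Z}$ with no $t$--spread $d$--subset has at most $(d-1)t$ elements, since a greedy choice of elements at pairwise distance $\ge t$ confines the whole set to at most $d-1$ intervals of length $t$ --- then gives $|F\setminus\{1\}|\le (d-1)t$, whence $|F|\le 1+(d-1)t$.

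For the lower bound $|F|\ge (d-1)t$ I would invoke the standard fact that $\depth(S/T)\le \dim(S/\p)$ for every $\p\in\Ass(S/T)$. As $T$ is squarefree, $S/T$ is reduced, so $\Ass(S/T)=\Min(T)=\{\p_{[n]\setminus F}:F\in\mathcal{F}(\Delta)\}$, and $\dim(S/\p_{[n]\setminus F})=|F|$. Combining this with $\depth(S/T)=(d-1)t$ yields $(d-1)t\le |F|$ for every facet $F$. Putting the two estimates together gives $|F|\in\{(d-1)t,\,1+(d-1)t\}$ for all $F\in\mathcal{F}(\Delta)$.

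I expect the only genuinely delicate point to be the upper bound: one must be sure that being a face forces the absence of a $t$--spread $d$--subset disjoint from the vertex $1$ (this is precisely where $\min(u)=1$ enters and where the $\slex$ comparison must be handled carefully), after which the greedy counting lemma is routine. The lower bound, by contrast, is essentially free once $\depth(S/T)$ is known, so the bulk of the remaining work is just organizing the depth computation and the elementary combinatorial lemma.
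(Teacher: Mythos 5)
Your proposal is correct. The upper bound $|F|\le 1+(d-1)t$ is essentially the paper's own argument in contrapositive form: the paper shows directly that every $G\subseteq[n]$ with $|G|=2+(d-1)t$ satisfies ${\bf x}_G\in T$, by greedily extracting from $\{g\in G:g>1\}$ a $t$--spread $d$--subset $H$ and observing that $\min({\bf x}_H)>1=\min(u)$ forces ${\bf x}_H\le_{\slex}u$, hence ${\bf x}_H\in T$; this is exactly your ``no $t$--spread $d$--subset avoiding the vertex $1$'' observation plus the same greedy counting lemma. Where you genuinely diverge is the lower bound $|F|\ge(d-1)t$. The paper obtains it by viewing $T$ as an initial $t$--spread lexsegment ideal for the reversed variable order and importing the explicit facet description of Theorem \ref{primdecompinitialtspreadlex} via Remark \ref{Rem:FacetsIniTSpread}, so it leans on the full combinatorial primary decomposition already established in the initial case. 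You instead compute $\depth(S/T)=(d-1)t$ from formula (\ref{Bettinumbersreversetspread}) and Auslander--Buchsbaum (the same computation as in Corollary \ref{cor:TinitialInvariants}, which does not depend on this proposition, so there is no circularity) and then invoke the standard inequality $\depth(S/T)\le\dim(S/\p)$ for every $\p\in\Ass(S/T)=\Min(T)$, together with $\dim(S/\p_{[n]\setminus F})=|F|$. Your route is more homological and arguably more self-contained --- it avoids transporting the facet classification through the order reversal --- at the cost of requiring the reversed Ene--Herzog--Qureshi Betti number formula up front; the paper's route is purely combinatorial and keeps the whole proof inside the simplicial-complex picture. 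Both are sound.
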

\begin{proof}
	Since we may see $T$ as an initial $t$--spread lexsegment ideal with the order on the variables reversed $x_n>x_{n-1}>\cdots>x_1$, then each facet of $\Delta$ has cardinality $\ge(d-1)t$ (Remark \ref{Rem:FacetsIniTSpread}). To prove that $|F|\le 1+(d-1)t$ for all $F\in\mathcal{F}(\Delta)$, it is enough to show that each subset $G\subseteq[n]$ with cardinality $|G|=2+(d-1)t$ does not belong to $\Delta$, \emph{i.e.}, ${\bf x}_G\in T$. 
	
	Let $G\subseteq[n]$ with $|G|=2+(d-1)t$. Define
	
	\begin{enumerate}
	\item[] $\ell_1=\min\{g\in G:g>1\}$, and 
	\item[] $\ell_j=\min\big\{k\in G:k\ge \ell_{j-1}+t\big\}$, for $j\ge 2$.
	\end{enumerate}
	Since $|G|=2+(d-1)t$, the set $\{g\in G:g>1\}$ has at least $1+(d-1)t$ elements. Hence, the sequence of integers $\ell_1<\ell_2<\cdots$ has at least $d$ terms. Therefore, $H=\{\ell_1,\ell_2,\dots,\ell_d\}\subseteq G$ and $u>_{\slex}{\bf x}_H$. Indeed, $\min(u)=1$ and $\min({\bf x}_H)=\ell_1>1$ and so
	${\bf x}_H\in T$. It follows that ${\bf x}_G\in T$, as desired. 
\end{proof}

In the next theorem, we assume that $T\ne I_{n,d,t}$, as the case of the Veronese ideal has been covered in Theorem \ref{primdecompinitialtspreadlex}. 
\begin{Thm}\label{primdecompfinaltspreadlex} 
Let $u=x_{i_1}x_{i_2}\dots x_{i_d}$ be a $t$--spread monomial of degree $d\ge2$ of $S$ with $i_1=1$  and let $T=(\mathcal{L}_t^f(u))$ be a final $t$--spread lexsegment ideal of $S$, with $T\ne I_{n,d,t}$. Then, the standard primary decomposition of $T$ is
	$$
	T\ =\ \bigcap_{G\in\mathcal{G}}\p_{[n]\setminus G}\cap\bigcap_{H\in\mathcal{H}}\p_{[n]\setminus H},
	$$
	where
	\begin{align*}
	\mathcal{G}\ &=\ \big\{\cosupp_t(w)\cup\{1\}:x_1w\in M_{n,d,t},\ w>_{\slex} u/x_1\big\},\\
	\mathcal{H}\ &=\ \big\{\cosupp_t(w):x_1w\in M_{n,d,t},\ w\le_{\slex}u/x_1\big\}.
	\end{align*}
\end{Thm}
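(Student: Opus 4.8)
The plan is to apply Proposition \ref{prop:AlexDuality1} to the simplicial complex $\Delta$ with $I_\Delta = T$: it suffices to prove that the facets of $\Delta$ are exactly the sets in $\mathcal{G}$ and in $\mathcal{H}$, i.e. $\mathcal{F}(\Delta) = \mathcal{G} \cup \mathcal{H}$. First I would record cardinalities. If $x_1 w \in M_{n,d,t}$ then $\min(w) \geq 1+t$, so $1 \notin \cosupp_t(w)$ and the $d-1$ closed intervals $[\ell_r - (t-1), \ell_r]$ composing $\cosupp_t(w)$ are pairwise disjoint; hence every $H \in \mathcal{H}$ has $|H| = (d-1)t$ and every $G \in \mathcal{G}$ has $|G| = 1 + (d-1)t$, in agreement with Proposition \ref{propcardfacetslexfin}. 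Although $T$ is an initial $t$--spread lexsegment for the reversed order $x_n > \cdots > x_1$, the hypothesis $\min(u)=1$ is precisely the degenerate situation excluded in Theorem \ref{primdecompinitialtspreadlex} (there $j_1 \geq 2$); this is what makes the vertex $1$ special and splits the facets into the two families $\mathcal{G}$ and $\mathcal{H}$, so I would argue directly.

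For the inclusion $\mathcal{G} \subseteq \mathcal{F}(\Delta)$, fix $G = \{1\} \cup \cosupp_t(w) \in \mathcal{G}$. Since each interval $[\ell_r-(t-1),\ell_r]$ has length $t$, any $t$--spread subset of $G$ meets it in at most one point, so a $t$--spread $d$--subset of $G$ must use the vertex $1$ together with exactly one vertex from each of the $d-1$ intervals; reading off right endpoints shows that the $\ge_{\slex}$--smallest degree--$d$ $t$--spread monomial dividing $\mathbf{x}_G$ is $x_1 w$. As $w >_{\slex} u/x_1$ gives $x_1 w >_{\slex} u$, every such divisor is $>_{\slex} u$, so $\mathbf{x}_G \notin T$ and $G \in \Delta$. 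Maximality is then immediate: $|G \cup \{i\}| = 2 + (d-1)t$ for every $i \notin G$, so $\mathbf{x}_{G \cup \{i\}} \in T$ by the proof of Proposition \ref{propcardfacetslexfin}.

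For $\mathcal{H} \subseteq \mathcal{F}(\Delta)$, fix $H = \cosupp_t(w) \in \mathcal{H}$. Being a disjoint union of $d-1$ length--$t$ intervals, $H$ contains no $t$--spread $d$--subset, so $\mathbf{x}_H$ has no degree--$d$ $t$--spread divisor and $H \in \Delta$. For maximality I would split $i \in [n] \setminus H$ into three positions relative to the intervals — to the left of the first, inside a gap between consecutive intervals, or to the right of the last — and in each case exhibit an explicit degree--$d$ $t$--spread monomial dividing $\mathbf{x}_{H \cup \{i\}}$, obtained by inserting $x_i$ and choosing the left endpoint $\ell_r - (t-1)$ or the right endpoint $\ell_r$ of each interval so as to preserve the $t$--spread condition. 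Such a monomial has minimum index $\geq 2$ when $i \geq 2$ (hence is $<_{\slex} u$, because $\min(u)=1$), while for $i=1$ it is exactly $x_1 w \leq_{\slex} u$ by the defining inequality of $\mathcal{H}$; either way it lies in $T$, so $\mathbf{x}_{H \cup \{i\}} \in T$ and $H$ is a facet.

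The main work, and the step I expect to be the obstacle, is the reverse inclusion $\mathcal{F}(\Delta) \subseteq \mathcal{G} \cup \mathcal{H}$. Let $F$ be a facet and set $F' = F \setminus \{1\}$. Because $\min(u) = 1$, any degree--$d$ $t$--spread monomial supported on $F'$ would be $<_{\slex} u$, hence in $T$, contradicting $\mathbf{x}_F \notin T$; therefore $F'$ contains no $t$--spread $d$--subset. Running the greedy construction of Theorem \ref{primdecompinitialtspreadlex} on $F'$ covers it by at most $d-1$ closed length--$t$ intervals, giving $|F'| \leq (d-1)t$. Together with the bounds $(d-1)t \leq |F| \leq 1 + (d-1)t$ from Proposition \ref{propcardfacetslexfin}, this pinches the cardinalities and forces the covering to be tight, so that $F' = \cosupp_t(w)$ for a monomial $w$ with $x_1 w \in M_{n,d,t}$. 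It then remains to decide the lex position of $w$: the condition $\mathbf{x}_F \notin T$ (applied to the $\ge_{\slex}$--smallest divisor $x_1 w$ when $1 \in F$) and the maximality of $F$ (by testing the vertex $1$ when $1 \notin F$) determine whether $w >_{\slex} u/x_1$ or $w \leq_{\slex} u/x_1$, placing $F$ in $\mathcal{G}$ or in $\mathcal{H}$ respectively. The delicate point is to rule out the spurious combination $1 \in F$ with $|F| = (d-1)t$: one must show that such an $F$ always admits a vertex whose addition keeps $\mathbf{x}_F$ outside $T$, contradicting maximality, and this requires exploiting the slack in $|F'| = (d-1)t - 1$ together with the lexicographic inequality governing membership in $T$.
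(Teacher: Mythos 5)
Your overall strategy coincides with the paper's: reduce everything to identifying $\mathcal{F}(\Delta)$ via Proposition \ref{prop:AlexDuality1}, use Proposition \ref{propcardfacetslexfin} to pin the facet cardinalities to $\{(d-1)t,1+(d-1)t\}$, check that the members of $\mathcal{G}$ and $\mathcal{H}$ are facets, and then run a greedy interval--covering argument on an arbitrary facet $F$ (the paper covers $F$ from the top by the intervals $[\ell_j-(t-1),\ell_j]$; you cover $F'=F\setminus\{1\}$, which amounts to the same thing). Up to the ``pinching'' of cardinalities your argument is sound, and the forward inclusions are in fact carried out more explicitly than in the paper.

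The genuine gap is the step you flag and leave unproved: ruling out a facet $F$ with $1\in F$ and $|F|=(d-1)t$. This cannot be waved away, because for $t=1$ such facets actually occur. Take $n=4$, $d=3$, $t=1$, $u=x_1x_2x_4$, so that $T=(x_1x_2x_4,x_1x_3x_4,x_2x_3x_4)\ne I_{4,3,1}$. Then $F=\{1,4\}$ is a face (${\bf x}_F=x_1x_4\notin T$) while $\{1,2,4\}$ and $\{1,3,4\}$ are non--faces, so $F$ is a facet of cardinality $(d-1)t=2$ containing $1$; it belongs to neither $\mathcal{G}=\{\{1,2,3\}\}$ nor $\mathcal{H}=\{\{2,4\},\{3,4\}\}$, and correspondingly $T=(x_4)\cap(x_1,x_2)\cap(x_1,x_3)\cap(x_2,x_3)$ has the minimal prime $(x_2,x_3)=\p_{[4]\setminus F}$ that the stated decomposition omits. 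The paper's own proof stumbles at exactly the point you identify: in the analysis of facets $H$ of cardinality $(d-1)t$ it sets $U=\{\ell_2,\dots,\ell_d\}$ and splits on whether $x_1{\bf x}_U>_{\slex}u$ or $x_1{\bf x}_U\le_{\slex}u$, which tacitly requires $\ell_2\ge 1+t$ so that $x_1{\bf x}_U$ is a $t$--spread monomial; for $t=1$ one can have $\ell_2=1$ (in the example $U=\{1,4\}$) and the dichotomy is vacuous. So your instinct about where the difficulty lies is exactly right, but the fix is not a cleverer maximality argument: the case in which the bottom interval of the covering of $F\setminus\{1\}$ degenerates (i.e.\ $\ell_2\le t$) must be treated separately, and while for $t\ge2$ it appears to force $u=\max(M_{n,d,t})$ (excluded by hypothesis), for $t=1$ it produces additional facets, so the families $\mathcal{G},\mathcal{H}$ --- and the statement itself --- would have to be corrected in that case.
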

\begin{proof}
	Let $\Delta$ be a simplicial complex on the vertex set $[n]$ associated to $T$. By Proposition \ref{prop:AlexDuality1}, we have to prove that
	\begin{equation}\label{facetprimdecomplexfint}
	\mathcal{F}(\Delta)=\mathcal{G}\cup\mathcal{H}.
	\end{equation}
	
	By Proposition \ref{propcardfacetslexfin}, 
	the facets of $\Delta$ have cardinality $(d-1)t$ or $1+(d-1)t$.\smallskip
	
	Let $G\in\mathcal{F}(\Delta)$ such that $|G|=1+(d-1)t$. We prove that $G\in\mathcal{G}$. First of all, $\min(G)=1$. Indeed, if $\min(G)\ge 2$, then setting $s_1=\min(G)$, and 
	$$
	s_j=\min\big\{s\in G:s\ge s_{j-1}+t\big\},\,\, \mbox{for $j\ge2$},
	$$
	the sequence $s_1<s_2<\cdots$ has at least $d$ elements, otherwise $|G|<1+(d-1)t$. Hence, $\{s_1,s_2,\dots,s_d\}=U\subseteq G$ and ${\bf x}_U\in T$, as $\min({\bf x}_U)=\min(G)\ge 2>1=\min(u)$. It follows that $G\notin\Delta$. A contradiction. Therefore, $\min(G)=1$. 
	
	Consider the following integers:
	\begin{enumerate}
	\item[] $\ell_d =\max(G)$, and
	\item[] $\ell_j =\max\big\{\ell\in G:\ell\le\ell_{j+1}-t\big\}$, for $j<d$.
	\end{enumerate}
	The sequence $\ell_d>\ell_{d-1}>\dots>\ell_k$ has at least $d$ terms, otherwise $|G|<1+(d-1)t$. Moreover it has at most $d$ terms, otherwise $\ell_1>1$, $x_{\ell_1}x_{\ell_2}\cdots x_{\ell_d}\in T$ and then $G\notin\Delta$. A contradiction. Hence, $k=1$.
	
	Finally,
	\begin{align*}
	G\subseteq F\ &=\ \big\{\ell_d,\ell_d-1,\dots,\ell_d-(t-1),\ \ldots,\ \ell_2,\ell_2-1,\dots,\ell_2-(t-1),\ \ell_1 \big\}\\
	&=\ \ \bigcup_{r=2}^{d}[\ell_r-(t-1),\ell_r]\cup\{\ell_1\}\ =\ \cosupp_t(x_{\ell_2}\cdots x_{\ell_d})\cup\{\ell_1\}.
	\end{align*}
	Moreover $1+(d-1)t=|G|\le|F|=1+(d-1)t$ and $\ell_1=1$, and so $G=F\in\mathcal{G}$. Clearly, $G$ is a facet. Indeed ${\bf x}_G\notin T$.\\
	
	Now, let us determine the facets of $\Delta$ with cardinality $(d-1)t$. Let $H\in\mathcal{F}(\Delta)$ with $|H|=(d-1)t$. We prove that $H\in\mathcal{H}$. Consider the following integers
	\begin{enumerate}
	\item[] $\ell_d =\max(H)$, and
	\item[] $\ell_j=\max\big\{\ell\in H:\ell\le\ell_{j+1}-t\big\}$,  for $j<d$.
	\end{enumerate}
	The sequence $\ell_d>\ell_{d-1}>\dots>\ell_k$ has at least $d-1$ terms, otherwise $|H|<(d-1)t$, against our assumption. In fact, we have $k=2$. Indeed, suppose $k=1$. 
		If $\ell_1=1$, then $H\subseteq G\in\mathcal{G}$. A contradiction since $H$ is a facet. On the other hand, if $\ell_1>1$, then $x_{\ell_1}x_{\ell_2}\cdots x_{\ell_d}\in T$, and $H\notin\Delta$. A contradiction.
	
	Finally, setting $U=\{\ell_2,\dots,\ell_d\}\subseteq H$, if $x_1{\bf x}_U>_{\slex}u$, then $H$ is contained in a facet $G\in\mathcal{G}$. A contradiction since $H$ is a facet. If $x_1{\bf x}_U\le_{\slex}u$, then $H\in\mathcal{H}$ and $H$ is clearly a facet. The proof is complete.
\end{proof}

\begin{Expl}\label{ex:primdecompT}
	\rm Let $u= x_1x_4x_6$ be a $2$--spread monomial of degree $3$ of $S=\mathbb{Q}[x_1,\dots,x_7]$. Consider the ideal $T=(\mathcal{L}_2^f(u))$ of $S$:
		$$
	T=(x_{1}x_{4}x_{6},x_{1}x_{4}x_{7},x_{1}x_{5}x_{7},x_{2}x_{4}x_{6},x_{2}x_{4}x_{7},x_{2}x_{5}x_{7},
	x_{3}x_{5}x_{7}).
	$$ 
	The monomials $w\in  M_{7,2,2}$ such that $x_1w\in M_{7,3,2}$ and  $w>_{\slex} u/x_1 = x_4x_6$ are
		$$
	x_3x_5,\ \ x_3x_6,\ \ x_3x_7;
	$$
	whereas, the monomials $w\in  M_{7,2,2}$  such that $x_1w\in M_{7,3,2}$ and $w\le_{\slex}u/x_1 = x_4x_6$ are 
	$$
	x_4x_6,\ \ x_4x_7,\ \ x_5x_7.
	$$
	Therefore,
	\begin{align*}
	\mathcal{G}\ &=\ \big\{\cosupp_2(x_3x_5)\cup\{1\}, \cosupp_2(x_3x_6)\cup\{1\}, \cosupp_2(x_3x_7)\cup\{1\}\big\}\\
	&=\ \big\{\{1,2,3,4,5\},\{1,2,3,5,6\},\{1,2,3,6,7\} \big\},\medskip\\
	\mathcal{H}\ &=\ \big\{\cosupp_2(x_4x_6), \cosupp_2(x_4x_7), \cosupp_2(x_5x_7)\big\}\\
	&=\ \big\{\{3,4,5,6\},\{3,4,6,7\},\{4,5,6,7\} \big\}.
	\end{align*}
	Thus, the standard primary decomposition of $T$ is
	$$
	T=(x_{4},x_{5})\cap(x_{4},x_{7})\cap(x_{6},x_{7})\cap(x_{1},x_{2},x_{3})\cap(x_{1},x_{2},x_{5})\cap(x_{1},x_{2},x_{7}).
	$$
\end{Expl}

Here are some corollaries of Theorem \ref{primdecompfinaltspreadlex}.
\begin{Cor}\label{cor:TinitialInvariants}
	Let $T=(\mathcal{L}_t^f(u))$ be a final $t$--spread lexsegment ideal of $S$. Then $\pd(S/T)=n-(d-1)t$, $\depth(S/T)=(d-1)t$ and
	$$
	\dim(S/T)=\begin{cases}
	(d-1)t&\textup{if}\ u=x_1x_{1+t}\cdots x_{1+(d-1)t},\\
	1+(d-1)t&\textup{otherwise}.
	\end{cases}
	$$
\end{Cor}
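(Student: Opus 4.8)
The plan is to obtain the three invariants by essentially the same mechanism used for the initial case in Corollary \ref{cor:JinitialInvariants}, exploiting the symmetry between initial and final $t$--spread lexsegment ideals under reversing the order of the variables. For the projective dimension, I would invoke the fact that $T$ is a $t$--spread strongly stable ideal (with the reversed order $x_n > \cdots > x_1$), so its graded Betti numbers are governed by the modified Ene--Herzog--Qureshi formula \eqref{Bettinumbersreversetspread}. The top homological degree in that formula is $\max_i\{n-\min(u)-t(j-1)\}$, and since $T$ is generated in degree $d$ with $\min(u)$ ranging down to the smallest possible first index, the largest such binomial coefficient index is $n-1-(d-1)t$ attained by the generator with $\min(u)=1$. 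Hence $\pd(T)=n-1-(d-1)t$ and $\pd(S/T)=\pd(T)+1=n-(d-1)t$. The Auslander--Buchsbaum formula then gives $\depth(S/T)=n-\pd(S/T)=(d-1)t$ immediately.

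For the Krull dimension, I would pass to the simplicial complex $\Delta$ associated to $T$ and use $\dim(S/T)=\dim(K[\Delta])=\max\{|F|:F\in\mathcal{F}(\Delta)\}$. By Theorem \ref{primdecompfinaltspreadlex} together with Proposition \ref{propcardfacetslexfin}, every facet has cardinality either $(d-1)t$ (those in $\mathcal{H}$) or $1+(d-1)t$ (those in $\mathcal{G}$). Thus the dimension is $1+(d-1)t$ precisely when $\mathcal{G}\ne\emptyset$, and equals $(d-1)t$ exactly when $\mathcal{G}=\emptyset$. So the whole computation reduces to deciding when $\mathcal{G}$ is empty.

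The key step is therefore to show that $\mathcal{G}=\emptyset$ if and only if $u=x_1x_{1+t}\cdots x_{1+(d-1)t}=\max(M_{n,d,t})$. Recall $\mathcal{G}=\{\cosupp_t(w)\cup\{1\}:x_1w\in M_{n,d,t},\ w>_{\slex}u/x_1\}$, so $\mathcal{G}=\emptyset$ means there is no $t$--spread monomial $x_1w\in M_{n,d,t}$ with $w>_{\slex}u/x_1$; equivalently, no element of $M_{n,d,t}$ with minimal index $1$ is strictly $\slex$-larger than $u$. Since $u$ itself has $\min(u)=1$, this is equivalent to $u$ being the $\slex$-maximum among all degree-$d$ $t$--spread monomials starting at $x_1$. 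But $x_1w\in M_{n,d,t}$ forces $w\in M_{n,d-1,t}$ with $\min(w)\ge 1+t$, and the $\slex$-largest such $w$ is $x_{1+t}x_{1+2t}\cdots x_{1+(d-1)t}$; hence the $\slex$-largest monomial with first index $1$ is exactly $x_1x_{1+t}\cdots x_{1+(d-1)t}$, which is $\max(M_{n,d,t})$. Therefore $\mathcal{G}=\emptyset$ iff $u=\max(M_{n,d,t})$, and otherwise $\mathcal{G}\ne\emptyset$ contributes a facet of cardinality $1+(d-1)t$.

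I expect the main subtlety to be the edge case handling: when $u=\max(M_{n,d,t})$ one has $T=I_{n,d,t}$, the Veronese ideal, which the theorem statement explicitly excludes ($T\ne I_{n,d,t}$), so strictly the first branch of the dimension formula should be read as the degenerate boundary value recovered from Corollary \ref{cor:JinitialInvariants} (where $\dim(S/I_{n,d,t})=(d-1)t$). I would note this compatibility explicitly so that the piecewise formula reads coherently across the two corollaries, and check that in every non-Veronese case $\mathcal{G}$ is genuinely nonempty so the dimension is $1+(d-1)t$. The remaining computations for $\pd$ and $\depth$ are routine once the strongly-stable structure and \eqref{Bettinumbersreversetspread} are in hand.
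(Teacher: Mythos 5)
Your proposal is correct and follows essentially the same route as the paper: the modified Ene--Herzog--Qureshi formula \eqref{Bettinumbersreversetspread} for $\pd$ and hence $\depth$ via Auslander--Buchsbaum, and the facet cardinalities from Theorem \ref{primdecompfinaltspreadlex} together with the observation that $\mathcal{G}=\emptyset$ exactly when $T=I_{n,d,t}$ (handled by falling back to Corollary \ref{cor:JinitialInvariants}) for the dimension. Your explicit verification that $\mathcal{G}=\emptyset$ iff $u=\max(M_{n,d,t})$ is in fact slightly cleaner than the paper's phrasing of that step.
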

\begin{proof}
	The ideal $T$ is $t$--spread strongly stable but with the order on the variables reversed, $x_n>x_{n-1}>\dots>x_1$. Hence, by (\ref{Bettinumbersreversetspread}), as $\min(u)=1$ and $u\in T$, $\pd(S/T)=\pd(T)+1=n-1-(d-1)t+1=n-(d-1)t$. By the Auslander--Buchsbaum formula, $\depth(S/T)=(d-1)t$.
	
	Suppose $T\ne I_{n,d,t}$. Let $\Delta$ the simplicial complex associated to $T$. By Theorem \ref{primdecompfinaltspreadlex}, the facets of $\Delta$ are those in (\ref{facetprimdecomplexfint}). Now, $\mathcal{H}$ is always non empty, as $u\in\mathcal{L}_t^f(u)$. Moreover, $\mathcal{G}$ is non empty too, as $u\ne\min(M_{n,d,t})$, otherwise $T=I_{n,d,t}$. Thus
	\begin{align*}
	\alt(T)\ &=\ \min\big\{\alt(\p_{[n]\setminus G}),\alt(\p_{[n]\setminus H}):G\in\mathcal{G},H\in\mathcal{H}\big\}\\
	&=\ \min\big\{n-(d-1)t,n-1-(d-1)t\big\}\ =\ n-1-(d-1)t.
	\end{align*}
	Thus, $\dim(K[\Delta])=1+(d-1)t$.\\
	Otherwise, if $T=I_{n,d,t}$, \emph{i.e.}, $u=\max(M_{n,d,t})=x_1x_{1+t}\cdots x_{1+(d-1)t}$, we have $\dim(K[\Delta])$ $=(d-1)t$ (Corollary \ref{cor:JinitialInvariants}).
\end{proof}
\begin{Cor}\label{cor:TCohenMac}
	A final $t$--spread lexsegment ideal of $S$ generated in degree $d\ge2$ is Cohen--Macaulay if and only if it is the $t$--spread Veronese ideal of degree $d$.
\end{Cor}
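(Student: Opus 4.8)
The plan is to read the result off directly from the invariants already computed in Corollary \ref{cor:TinitialInvariants}, using the standard criterion that a graded quotient $S/T$ is Cohen--Macaulay if and only if $\dim(S/T)=\depth(S/T)$. All the computational content has in fact been front-loaded into that corollary, so the proof amounts to comparing the two formulas.

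First I would record that, by Corollary \ref{cor:TinitialInvariants}, one always has $\depth(S/T)=(d-1)t$, independently of the choice of $u$. Consequently the Cohen--Macaulay property of $S/T$ is equivalent to the single numerical condition $\dim(S/T)=(d-1)t$.

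Next I would invoke the dimension formula of the same corollary. Since $\dim(S/T)=(d-1)t$ precisely when $u=x_1x_{1+t}\cdots x_{1+(d-1)t}$, while $\dim(S/T)=1+(d-1)t>\depth(S/T)$ in every other case, the equality $\dim(S/T)=\depth(S/T)$ holds if and only if $u=x_1x_{1+t}\cdots x_{1+(d-1)t}=\max(M_{n,d,t})$. But this is exactly the condition $T=I_{n,d,t}$: indeed $\mathcal{L}_t^f(\max(M_{n,d,t}))=M_{n,d,t}$, so the final lexsegment ideal generated by $u$ coincides with the $t$--spread Veronese ideal precisely when $u$ is the largest monomial. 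This settles the ``only if'' direction.

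For the converse I would simply recall that $I_{n,d,t}$ is Cohen--Macaulay, as was already established in Corollary \ref{cor:JCohenMac} via the equality $\dim(S/I_{n,d,t})=\depth(S/I_{n,d,t})=(d-1)t$. There is no genuine obstacle in this argument, since the heavy lifting lives in Corollary \ref{cor:TinitialInvariants}; the only point that deserves an explicit check is the identification $T=I_{n,d,t}\iff u=\max(M_{n,d,t})$, which is immediate from the definition of a final $t$--spread lexsegment.
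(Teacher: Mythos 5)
Your argument is correct, and it does reach the conclusion by a slightly different route than the paper. The paper's proof of the forward direction never touches $\depth$: it uses only that Cohen--Macaulay implies the associated complex $\Delta$ is pure, and then observes from Theorem \ref{primdecompfinaltspreadlex} that when $T\ne I_{n,d,t}$ both families $\mathcal{G}$ and $\mathcal{H}$ are nonempty and produce facets of the two distinct cardinalities $1+(d-1)t$ and $(d-1)t$, contradicting purity. Your proof instead runs everything through the criterion $\dim(S/T)=\depth(S/T)$ and reads both invariants off Corollary \ref{cor:TinitialInvariants}; this is clean and handles both implications in one sweep, but it silently imports the depth computation, which rests on the Betti number formula (\ref{Bettinumbersreversetspread}) and Auslander--Buchsbaum, whereas the paper's forward direction is purely combinatorial (it only needs the primary decomposition). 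That distinction is not cosmetic: it is exactly what lets the authors observe in Remark \ref{rem:unmixed} that for final $t$--spread lexsegment ideals the Cohen--Macaulay classification coincides with the unmixed classification --- a byproduct your $\dim=\depth$ argument does not exhibit. One further small point: the identification ``$\dim(S/T)=(d-1)t$ iff $u=\max(M_{n,d,t})$ iff $T=I_{n,d,t}$'' tacitly uses the standing normalization $\min(u)=1$ from the beginning of Section \ref{sec2} (otherwise one first passes to the subring $K[x_{i_1},\dots,x_n]$); it is worth saying so explicitly, since the statement of the corollary does not repeat that hypothesis.
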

\begin{proof}
	Let $T=(\mathcal{L}_t^f(u))$ and let $\Delta$ be the simplicial complex associated to $T$. Assume $T$ is Cohen--Macaulay, then $\Delta$ is pure. By contradiction, suppose $T\ne I_{n,d,t}$. By Theorem \ref{primdecompfinaltspreadlex}, the facets of $\Delta$ are those described in (\ref{facetprimdecomplexfint}). The families $\mathcal{G}$ and $\mathcal{H}$ are both non empty, the first as $u\in\mathcal{L}_t^f(u)$, the second as $u\ne\max(M_{n,d,t})$. But, any $G\in\mathcal{G}$ has cardinality $|G|=1+(d-1)t$ and any $H\in\mathcal{H}$ has cardinality $|H|=(d-1)t$. So $\Delta$ should not be pure. A contradiction. Thus $T=I_{n,d,t}$. The converse easily follows.
\end{proof}
\begin{Rem}\label{rem:unmixed}
 \em A simplicial complex $\Delta$ is pure if and only if the Stanley--Reisner ideal $I_\Delta$ is unmixed, in the sense that all associated prime ideals have the same height (see, for instance, \cite{JT}).
One can observe that the proofs of Corollary \ref{cor:JCohenMac} and Corollary \ref{cor:TCohenMac} have pointed out that the classification for initial Cohen--Macaulay $t$--spread lexsegment ideal  and for final Cohen--Macaulay $t$--spread lexsegment ideal  is equivalent to the classification for initial unmixed $t$--spread lexsegment ideal and for final unmixed $t$--spread lexsegment ideal.
\end{Rem}
\subsection{The case of a completely $t$--spread lexsegment ideal}

The aim of this subsection is to determine the minimal primary decomposition of a completely $t$--spread lexsegment ideal $I=(\mathcal{L}_t(u,v))$ of $S=K[x_1,\dots,x_n]$, $u, v \in M_{n,d,t}$, $u>_{\slex} v$ with $\min(u)=1$ and $\min(v)\ge 2$.

\begin{Thm}\label{primdecompgeneraltspreadlex}
	Let $u=x_{i_1}x_{i_2}\dots x_{i_d}, v=x_{j_1}x_{j_2}\cdots x_{j_d}$ be $t$--spread monomials of degree $d\ge2$ of $S$ with $i_1=1$ and $j_1\ge 2$. Let $I=(\mathcal{L}_t(u,v))$ be a completely $t$--spread lexsegment ideal of $S$, $I\ne I_{n,d,t}$. Then, the standard primary decomposition of $I$ is
	$$
	I\ =\ \bigcap_{G\in\mathcal{G}}\p_{[n]\setminus G}\cap\bigcap_{p\in\mathcal{I}}\p_{F_p}\cap\bigcap_{F\in\widetilde{\mathcal{F}}}\p_{[n]\setminus F},
	$$
	where $F_p$ $(p=1,\dots,d)$ and $\mathcal{G}$ 
	are the families described in Theorems \ref{primdecompinitialtspreadlex} and \ref{primdecompfinaltspreadlex},
	\begin{enumerate}
		\item[-] $\mathcal{I}=[d]\setminus\big\{p\in[d]:\big|[n]\setminus F_{p}\big|=(d-1)t \,\,\mbox{and  $v/x_{j_p} >_{\slex} u/x_1$}\big\}$,
		\item[-] $\widetilde{\mathcal{F}}$ is the family consisting of the sets $F=\supp_t(w)\in\mathcal{F}$ with $w=x_{\ell_1}\cdots x_{\ell_{d-1}}$ such that 
		\begin{enumerate}
		\item[-] $1\notin F$ and $x_{\ell_1+(t-1)}\cdots x_{\ell_{d-1}+(t-1)}\le_{\slex}u/x_1$, or
		\item[-] $1\in F$ and $F\cup\{j\}\notin\mathcal{G}$, for all $j\notin F$,
	\end{enumerate}
	\end{enumerate}
	where $\mathcal{F}$ is the family described in Theorem \ref{primdecompinitialtspreadlex}.
	
\end{Thm}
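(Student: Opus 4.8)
The plan is to read the decomposition of $I=J\cap T$ off the two decompositions already obtained for $J$ and $T$. Since $I$ is squarefree, its standard primary decomposition is $\bigcap_{\p\in\Min(I)}\p$, and $\Min(I)$ consists of the minimal elements, with respect to inclusion of the defining vertex sets, of $\Min(J)\cup\Min(T)$. Equivalently, writing $\Delta_J,\Delta_T$ for the complexes with $I_{\Delta_J}=J$, $I_{\Delta_T}=T$, one has $I=I_{\Delta_J}\cap I_{\Delta_T}=I_{\Delta_J\cup\Delta_T}$, so by Proposition \ref{prop:AlexDuality1} it suffices to determine the facets of $\Delta_J\cup\Delta_T$, i.e.\ the maximal members of
$$
\mathcal F(\Delta_J)\cup\mathcal F(\Delta_T)=\big\{[n]\setminus F_p:p\in[d]\big\}\cup\mathcal F\cup\mathcal G\cup\mathcal H ,
$$
taken from Theorems \ref{primdecompinitialtspreadlex} and \ref{primdecompfinaltspreadlex}. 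Because $\p_A\subseteq\p_B$ iff $A\subseteq B$, the whole problem is reduced to a containment analysis among these explicit sets, governed by the cardinalities $|F|=|H|=(d-1)t$, $|G|=1+(d-1)t$, $|[n]\setminus F_p|\ge(d-1)t$ (Remark \ref{Rem:FacetsIniTSpread}), and by the special role of the vertex $1$.

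First I would dispose of the uniform families. As $\min(v)\ge2$, one has $1\in F_p$ for all $p$, while $1\in G$ for all $G\in\mathcal G$; hence $1\in G\cap F_p$, so no $G$ is contained in any $[n]\setminus F_p$, and by cardinalities in no other facet either, so every $G\in\mathcal G$ survives. For $\mathcal H$ I would use $\cosupp_t(w)=\supp_t(w_0)$, where $w_0$ lowers each exponent of $w$ by $t-1$: writing $H=\cosupp_t(w)=\supp_t(w_0)$ with $x_1w\in M_{n,d,t}$, the set $H$ is a face of $\Delta_J$, so by the face-to-facet argument already carried out in the proof of Theorem \ref{primdecompinitialtspreadlex} it lies in a facet of $\Delta_J$. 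If $w_0>_{\slex}v/x_{\max(v)}$ then $H=\supp_t(w_0)\in\mathcal F$, so $H$ is a facet of both $\Delta_J$ and $\Delta_T$, hence of the union; this is the branch $1\notin F$ of $\widetilde{\mathcal F}$, whose displayed inequality is exactly $w\le_{\slex}u/x_1$ rewritten through $F=\supp_t(w_0)$. If instead $w_0\le_{\slex}v/x_{\max(v)}$ then $H\notin\mathcal F$, and having the cardinality $(d-1)t$ of the facets in $\mathcal F$ it cannot sit inside one of them; it is therefore absorbed by some $[n]\setminus F_p$ and contributes no new prime, which is why $\mathcal H$ does not appear on its own.

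It remains to decide which $F_p$ and which $F\in\mathcal F$ with $1\in F$ are maximal. A facet $[n]\setminus F_p$ is already maximal inside $\Delta_J$, so it is lost only when strictly contained in a facet of $\Delta_T$; by cardinalities this forces $|[n]\setminus F_p|=(d-1)t$, i.e.\ $j_p=n-(d-p)t$, together with $[n]\setminus F_p\subseteq G$ for some $G\in\mathcal G$ (containment in some $H\in\mathcal H$ would force equality, a shared facet, which is retained with $p\in\mathcal I$). Since $1\notin[n]\setminus F_p$ while $1\in G$, the extra vertex of $G$ must be $1$, so the criterion is $\big([n]\setminus F_p\big)\cup\{1\}\in\mathcal G$; unwinding this membership in terms of $\slex$ yields the condition $v/x_{j_p}>_{\slex}u/x_1$ and hence the set $\mathcal I$. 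Symmetrically, a facet $F\in\mathcal F$ with $1\in F$ is maximal inside $\Delta_J$ and, having cardinality $(d-1)t$ and containing $1$, can be beaten only by some $G=F\cup\{j\}\in\mathcal G$ with $j\notin F$; negating this gives precisely the branch $1\in F$, $F\cup\{j\}\notin\mathcal G$ for all $j\notin F$, of $\widetilde{\mathcal F}$. Collecting the surviving facets $\mathcal G$, the $F_p$ with $p\in\mathcal I$, and the $F\in\widetilde{\mathcal F}$, and dualizing, gives the asserted decomposition.

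The genuinely delicate step, and the one I expect to be the main obstacle, is the translation of the set relations $\big([n]\setminus F_p\big)\cup\{1\}\in\mathcal G$ and $F\cup\{j\}\in\mathcal G$ into the stated $\slex$-inequalities. When $j_p=n-(d-p)t$ the indices $j_p,\dots,j_d$ are forced to their maximal values, so the monomial $w'$ with $\cosupp_t(w')=[n]\setminus F_p$ shares the tail of $v/x_{j_p}$ and differs from it only in the first $p-1$ entries, where it reads $j_r+(t-1)$ in place of $j_r$; this makes one implication automatic, while the reverse one, and the clean dichotomy separating the absorbed members of $\mathcal H$ from the shared ones in $\mathcal F\cap\mathcal H$, is exactly where the completely $t$--spread lexsegment hypothesis of Theorem \ref{thm:compltspreadlex}(1) must be invoked to pin down the position of $u/x_1$ relative to these monomials. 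Carrying out this bookkeeping, and checking the boundary cases where a facet of $\Delta_J$ coincides with one of $\Delta_T$, completes the proof.
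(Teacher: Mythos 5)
Your reduction is sound and is essentially the route the paper takes: the facets of the complex of $I=J\cap T$ are the maximal members of $\mathcal{F}(\Delta_J)\cup\mathcal{F}(\Delta_T)$; every $G\in\mathcal{G}$ survives by the cardinality count and the role of the vertex $1$; every $H\in\mathcal{H}$ either coincides with a member of $\mathcal{F}$ (your identification of the branch $1\notin F$ of $\widetilde{\mathcal{F}}$ with $\mathcal{F}\cap\mathcal{H}$ is correct, and your absorption argument for the remaining $H$ via cardinality is cleaner than the paper's explicit case analysis); and the survival of $[n]\setminus F_p$ and of $F\in\mathcal{F}$ with $1\in F$ reduces to whether adjoining the missing vertex $1$ (resp.\ $j$) lands in $\mathcal{G}$. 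All of this is carried out correctly.

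The genuine gap is precisely the step you postpone as ``bookkeeping'': the translation of $\big([n]\setminus F_p\big)\cup\{1\}\in\mathcal{G}$ into the stated condition $v/x_{j_p}>_{\slex}u/x_1$. When $\big|[n]\setminus F_p\big|=(d-1)t$ one has $j_i=n-(d-i)t$ for $i\ge p$ and $[n]\setminus F_p=\cosupp_t(w_p)$ with $w_p=x_{j_1+t-1}\cdots x_{j_{p-1}+t-1}\,x_{j_{p+1}}\cdots x_{j_d}$, so membership in $\mathcal{G}$ means $w_p>_{\slex}u/x_1$. Since $v/x_{j_p}\ge_{\slex}w_p$, the implication you call ``automatic'' is indeed free, but the converse --- that $v/x_{j_p}>_{\slex}u/x_1$ forces $w_p>_{\slex}u/x_1$ --- is the entire content of the definition of $\mathcal{I}$, and your proposal offers no argument for it; invoking the completely lexsegment hypothesis does not obviously close it, because for $t\ge2$ and $p\ge2$ the monomial $u/x_1$ can sit strictly between $w_p$ and $v/x_{j_p}$. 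Concretely, for $t=2$, $S=K[x_1,\dots,x_6]$, $u=x_1x_4$, $v=x_3x_6$, the ideal $I=(\mathcal{L}_2(u,v))$ is completely $2$--spread lexsegment, $\p_{F_2}=(x_1,x_2,x_5,x_6)$ is a minimal prime of $I$, yet $v/x_{j_2}=x_3>_{\slex}x_4=u/x_1$, so $2\notin\mathcal{I}$ and the displayed decomposition misses this component. (For what it is worth, the paper's own proof only establishes the easy implication and its contrapositive at this point.) So the deferred step is not routine: to finish along these lines you must either prove the missing implication under the standing hypotheses or work with the criterion $w_p>_{\slex}u/x_1$, which agrees with $v/x_{j_p}>_{\slex}u/x_1$ only when $t=1$ or $p=1$.
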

\begin{proof}
	By hypothesis, $I=J\cap T$ with $J=(\mathcal{L}^i_t(v))$ and $T=(\mathcal{L}_t^f(u))$. By Theorems \ref{primdecompinitialtspreadlex} and \ref{primdecompfinaltspreadlex} we have
	\begin{equation}\label{eq:notminimalprimdecom}
	I=T\cap J=\bigcap_{G\in\mathcal{G}}\p_{[n]\setminus G}\cap\bigcap_{H\in\mathcal{H}}\p_{[n]\setminus H}\cap\bigcap_{p=1}^d\p_{F_p}\cap\bigcap_{F\in\mathcal{F}}\p_{[n]\setminus F}.
	\end{equation}
	To find the standard primary decomposition of $I$, it suffices to delete from this presentation those primes that are not minimal. Equivalently, if $\Delta$ is the simplicial complex on vertex set $[n]$ associated to $I$, we must determine the facets of $\Delta$. By (\ref{eq:notminimalprimdecom}) we have that
	\begin{equation}\label{eq:4families}	
	\mathcal{F}(\Delta)\subseteq\mathcal{G}\cup\mathcal{H}\cup\big\{[n]\setminus F_p:p=1,\dots,d\big\}\cup\mathcal{F}.
	\end{equation}
	where $\mathcal{G}$, $\mathcal{H}$, $F_p$ ($p=1,\dots,d$), $\mathcal{F}$
 are the families described in Theorems \ref{primdecompinitialtspreadlex} and \ref{primdecompfinaltspreadlex}.\\\\
	\textsc{Claim.} $\mathcal{F}(\Delta)=\mathcal{G}\cup\big\{[n]\setminus F_{\ell}:\ell\in\mathcal{I}\big\}\cup\widetilde{\mathcal{F}}$.\\
	\textsc{Proof of the Claim}.
	We analyze the four families in (\ref{eq:4families}), and in so doing we determine the facets of $\Delta$.\\
	\begin{enumerate}
	\item[-] Let $G\in\mathcal{G}$. Since $I\subseteq T$ and ${\bf x}_G\notin T$, it follows that ${\bf x}_G\notin I$, \emph{i.e.}, $G\in\Delta$. Let us prove that $G$ is a facet. Indeed, $G\not\subseteq F, H$, for all $F\in\mathcal{F}$ and all $H\in\mathcal{H}$. Indeed, $|F|=|H|=(d-1)t<1+(d-1)t=|G|$. Moreover, $G\not\subseteq [n]\setminus F_{p}$, as $1\in G$ and $1\in F_{p}$. In fact, $\min(v)=j_1\ge2$.\\
	\item[-] Let $H\in\mathcal{H}$. Arguing as before, $H\in\Delta$. We show that we can ``eliminate" the family $\mathcal{H}$. In fact, we can write $H=\cosupp_t(x_{\ell_2}\cdots x_{\ell_d})$, with $x_1x_{\ell_2}\cdots x_{\ell_d}\in M_{n,d,t}$ and $x_{\ell_2}\cdots x_{\ell_d}\le_{\slex}u/x_1$. Let
	$$
	\widetilde{H}=\{\ell_2-(t-1),\ell_3-(t-1),\ldots,\ell_{d}-(t-1)\}\subseteq H.
	$$
	
Setting $h_i =  \ell_{i+1}-(t-1)$, for $i=1, \ldots, d-1$, then $\widetilde H=\{h_1<h_2<\dots<h_{d-1}\}$. We distinguish three cases. 
\begin{enumerate}
\item[-] \textsc{Case 1.} Let ${\bf x}_{\widetilde{H}}>_{\slex}v/x_{\max(v)}$. Since $\supp_t({\bf x}_{\widetilde{H}}) = \cosupp_t(x_{\ell_2}\cdots x_{\ell_d})=H$, then $H\in\mathcal{F}$ (we will determine when $H\in\mathcal{F}$ is a facet of $\Delta$ later).

\item[-] \textsc{Case 2.} Let ${\bf x}_{\widetilde{H}}<_{\slex}v/x_{\max(v)}$. Then there exists an integer $s\in[d-1]$ such that $j_1=h_1,\ldots,j_{s-1}=h_{s-1}$ and $j_s<h_s$. Hence
	\begin{align*}
	H\ &\subseteq\ \big\{j_1,\dots,j_1+(t-1),\ldots,j_{s-1},\dots,j_{s-1}+(t-1),\ j_s+1,\dots,n\big\}\\
	&=\ [n]\setminus F_s=F.
	\end{align*}
	Moreover, ${\bf x}_F\notin I$, as $I\subseteq J$ and ${\bf x}_F\notin J$. So $H$ is included in $F\in\Delta$.
		\item[-]  \textsc{Case 3.} Let ${\bf x}_{\widetilde{H}}=v/x_{\max(v)}$. Then we have $H\subseteq[n]\setminus F_d=F$, and also $F\in\Delta$. Thus $H$ is included in the face $F\in\mathcal{F}$.
	 \end{enumerate}
	\item[-] Let $F=[n]\setminus F_{p}$, some some $1\le p\le d$. Since $I\subseteq J$ and ${\bf x}_F\notin J$, then ${\bf x}_F\notin I$ and so $F\in\Delta$. If $|F|\ge1+(d-1)t$, then $F$ is a facet as $F$ is not contained in any other set of the families $\mathcal{F},\mathcal{H}$. Moreover, $F\ne G$ for all $G\in\mathcal{G}$, as $1\in G\setminus F$.\\ 
	Suppose $|F|=(d-1)t$. Then $F$ is not a facet if and only if $F\subsetneq G$, for some $G\in\mathcal{G}$. Therefore, if and only if $F=G\setminus\{1\}$, for some $G\in\mathcal{G}$. On the other hand, $G=\big\{\ell_d,\ell_d-1,\dots,\ell_d-(t-1),\ \ldots,\ \ell_2,\ell_2-1,\dots,\ell_2-(t-1),\ 1 \big\}$ is characterized by the conditions $x_1x_{\ell_2}\cdots x_{\ell_d}\in M_{n,d,t}$ and $x_1x_{\ell_2}\cdots x_{\ell_d}>_{\slex}u$. We observe that $\supp(v/x_{j_p})\subseteq F=[n]\setminus F_{p}$. Hence, if $F\subseteq G$, then
	$$
	v/x_{j_p}\ge_{\slex}x_{\ell_2}\cdots x_{\ell_d}>_{\slex}u/x_1.
	$$
	Thus, if $F\subseteq G$, then $v/x_{j_p}>_{\slex}u/x_1$. On the other hand, if $v/x_{j_p}\le_{\slex}u/x_1$, then $x_{\ell_2}\cdots x_{\ell_d}\le_{\slex}v/x_{j_p}\le_{\slex}u/x_1$ and $G$ is not a facet.
	Finally, we have verified that $F=[n]\setminus F_{p}$ is a facet if and only if $p\in\mathcal{I}$.\\
	\item[-] Let $F\in\mathcal{F}$. Clearly ${\bf x}_F\notin I$ and so $F\in\Delta$. We have that $$F=\big\{\ell_1,\ell_1+1,\ \ldots,\ \ell_1+(t-1),\dots,\ell_{d-1},\ell_{d-1}+1,\dots,\ell_{d-1}+(t-1)\big\},$$
	with $x_{\ell_1}\cdots x_{\ell_{d-1}}\in M_{n+1-t,d-1,t}$ and $x_{\ell_1}\cdots x_{\ell_{d-1}}>_{\slex}v/x_{\max(v)}$. If $F$ is not a facet, then $F\subsetneq G$, for some $G\in\mathcal{G}$.
	\begin{enumerate}
	\item[-] \textsc{Case 1.} Let $1\notin F$. Then we have $F=G\setminus\{1\}$. $G$ is characterized by the condition $x_{\ell_1+(t-1)}\cdots x_{\ell_{d-1}+(t-1)}>_{\slex}u/x_1$. Hence, in such a case, $F$ is a facet if and only if $x_{\ell_1+(t-1)}\cdots x_{\ell_{d-1}+(t-1)}\le_{\slex}u/x_1$.\\
	\item[-]  \textsc{Case 2.} Let $1\in F$. Set $\ell=\ell_j+t$, if $j=\min\big\{j\in [d-2]:\ell_j+(t-1)<\ell_{j+1}\big\}$ does exist, otherwise set $\ell=\ell_{d-1}+t$. Then $F\subseteq F\cup\{\ell\}$.\\ 
\end{enumerate}
	Finally $F$ is a facet if and only if $F\cup\{\ell\}\notin\mathcal{G}$.
		Thus, if and only if
	\begin{align*}
	x_{\ell_1+t}\cdots x_{\ell_{j-1}+t}x_{\ell_j+t}\cdots x_{\ell_{d-1}+(t-1)}\le_{\slex}u/x_1,&\ \ \text{if}\ \ell=\ell_j+t,\\
	x_{\ell_1+t}\cdots x_{\ell_{j-1}+t}x_{\ell_j+t}\cdots x_{\ell_{d-1}+t}\le_{\slex}u/x_1,&\ \ \text{if}\ \ell=\ell_{d-1}+t.
	\end{align*}
	\end{enumerate}
The claim follows.
\end{proof}

\begin{Expl}
	\rm Let $u=x_1x_4x_6$ and $v=x_2x_5x_7$ be $2$--spread monomials of degree $3$ of $S=\mathbb{Q}[x_1,\dots,x_7]$. Consider the $t$--spread lexsegment ideal 
	$$
	I=(\mathcal{L}_2(u,v))= (x_1x_4x_6,x_1x_4x_7,x_1x_5x_7,x_2x_4x_6,x_2x_4x_7,x_2x_5x_7)
	$$
	of $S$. Firstly, we verify that $I$ is a completely $2$--spread lexsegment ideal, \emph{i.e.}, $I=J\cap T$, where $J$ and $T$ are the $2$--spread lexsegment ideals in Examples \ref{ex:primdecompJ} and \ref{ex:primdecompT}, respectively. By \cite[Theorem 3.7. (b)]{FC1},  $I$ is a completely $2$--spread lexsegment ideal if and only if for all $w\in M_{7,3,2}$ with $w<_{\slex}v=x_2x_5x_7$, there exists $s\in\supp(w)$ such that $x_{\min(u)}w/x_s=x_1w/x_s\le_{\slex}u=x_1x_4x_6$.\\
	We have $w=x_3x_5x_7$ and we may choose $s=\min(w)$. Thus, $I$ is a completely $2$--spread lexsegment ideal.\\
	Setting, $u=x_{i_1}x_{i_2}x_{i_3} = x_1x_{i_2}x_{i_3}$ and $v=x_{j_1}x_{j_2}x_{j_3} = x_2x_{j_2}x_{j_3}$, we have
	$$
	\mathcal{I}=[3]\setminus\big\{p\in[3]:\big|[7]\setminus F_{p}\big|=2\cdot2\ \text{and}\ u/x_1=x_4x_6<_{\slex}v/x_{j_p}=x_2x_5x_7/x_{j_{\ell}}\big\}.
	$$
	
	The only sets $F_{p}$ with $\big|[7]\setminus F_{p}\big|=2\cdot2=4$ are $F_2=\{1,4,5\}$ and $F_3=\{1,4,7\}$ (Example \ref{ex:primdecompJ}). Moreover, $v/x_{j_2}=x_2x_7>_{\slex}x_4x_6$ and $v/x_{j_3}=x_2x_5>_{\slex}x_4x_6$. Thus, $\mathcal{I}=[3]\setminus\{2,3\}=\{1\}$.
	
	From Examples \ref{ex:primdecompJ} and \ref{ex:primdecompT}, we have:
	 $$\mathcal{G}=\big\{\{1,2,3,4,5\},\{1,2,3,5,6\},\{1,2,3,6,7\} \big\}$$ and
	$$
	\mathcal{F}=\big\{\{1,2,3,4\},\{1,2,4,5\},\{1,2,5,6\},\{1,2,6,7\},\{2,3,4,5\}\big\}.
	$$
	Let $F\in\mathcal{F}$ such that $1\notin F$. Then $F=\{2,3,4,5\}=\supp_2(x_2x_4)$. We have that 
	$x_3x_5>_{\slex}u/x_1=x_4x_6$, so $F\notin\widetilde{\mathcal{F}}$.\\
	Let $F\in\mathcal{F}$ such that $1\in F$. Then $F\in\widetilde{\mathcal{F}}$ if and only if $F\not\subseteq G$, for all $G\in\mathcal{G}$. Since such sets do not exist, $\widetilde{\mathcal{F}}=\emptyset$.

		Finally, the standard primary decomposition of $I$ is
	$$
	I=(x_1,x_2)\cap(x_4,x_5)\cap(x_4,x_7)\cap(x_6,x_7).
	$$
\end{Expl}

\section{Cohen--Macaulay $t$--spread lexsegment ideals} \label{sec3}

In Section \ref{sec2}, we have seen that if $I$ is an initial or a final $t$--spread lexsegment ideal of $S=K[x_1,\dots,x_n]$, $t\ge1$, then $I$ is Cohen--Macaulay if and only if $I=I_{n,d,t}$ is the Veronese $t$--spread ideal of degree $d$ of $S$ (Corollaries \ref{cor:JCohenMac}, \ref{cor:TCohenMac}). In this section, we consider the more general problem of classifying all Cohen--Macaulay $t$--spread lexsegment ideals $I=(\mathcal{L}_t(u,v))$ of $S$, with $t\ge1$. The case $t=1$ has been examined also in \cite{BST} by using Serre's condition $(S_2)$. 

As in Section \ref{sec2}, we may assume that $I=(\mathcal{L}_t(u,v))$ with $u>_{\slex}v$, $\min(u)=1$, $\min(v)\ge 2$ and $\deg(u)=\deg(v)=d\ge 2$. In order to achieve our purpose, we will distinguish two cases: $\min(v)=2$ and $\min(v)>2$. 

First, we note that we can always suppose $n>2+(d-1)t$. Indeed, for $n=1+(d-1)t$, there is only one $t$--spread lexsegment ideal, namely $I=(x_1x_{1+t}\cdots x_{1+(d-1)t})=I_{1+(d-1)t,d,t}$ which is always a Cohen--Macaulay ideal; whereas, for $n=2+(d-1)t$, $I=(\mathcal{L}_t(u,v))$ with $v=x_2x_{2+t}\cdots x_{2+(d-1)t}$, necessarily. In fact, in such a case 
there does exist only one $t$--spread monomial whose minimum is equal to $2$. Hence, $I$ is final and therefore Cohen--Macaulay if and only if $I=I_{2+(d-1)t,d,t}$ (Corollary \ref{cor:TCohenMac}).

In what follows we denote the squarefree lex order $\ge_{\slex}$ simply by $\ge$.

\subsection{The height two case}

In this subsection we study the Cohen--Macaulayness of a $t$--spread lexsegment ideal $I=(\mathcal{L}_t(u,v))$ of $S$ with $\min(v)=2$.  We will distinguish two cases: $3+(d-1)t\le n\le 3+(2d-3)t$ and $n\ge 4+(2d-3)t$.

We start with the case $3+(d-1)t\le n\le 3+(2d-3)t$. The notion of \textit{Betti splitting} \cite{EK} (see also \cite{FHT2009} and the reference therein) will be crucial.

Let $I$, $P$, $Q$ be monomial ideals of $S=K[x_1,\dots,x_n]$ such that $G(I)$ is the disjoint union of $G(P)$ and $G(Q)$. We say that $I=P+Q$ is a \textit{Betti splitting} if
$$
\beta_{i,j}(I)=\beta_{i,j}(P)+\beta_{i,j}(Q)+\beta_{i-1,j}(P\cap Q), \ \ \ \textup{for all}\ i,j.
$$ 
If $I=P+Q$ is a Betti splitting \cite[Corollary 2.2]{FHT2009}, then
\begin{align}
\label{eq:pd(I)j1=2BettiSplit}\pd(I)\ &=\ \max\{\pd(P),\pd(Q),\pd(P\cap Q)+1\}.
\end{align}

\begin{Lem}\label{Lem:BettiSplitxi}
	\textup{\cite[Corollary 2.7]{FHT2009}} Let $I$ be a monomial ideal of $S$. Let $Q$ be the ideal generated
by all elements of $G(I)$ divisible by $x_i$ and let $P$ be the ideal generated by all other elements of $G(I)$. If the ideal $Q$ has a linear resolution, then $I=P+Q$ is a Betti splitting.
\end{Lem}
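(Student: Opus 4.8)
The plan is to reduce the statement to a purely homological vanishing criterion, and then verify that criterion by two independent support arguments: a fine (multigraded) one for the part of the map landing in $\Tor(P,K)$, and a coarse (single-degree) one, powered by the linear-resolution hypothesis, for the part landing in $\Tor(Q,K)$.

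First I would set up the Mayer--Vietoris short exact sequence attached to $I=P+Q$, namely $0\to P\cap Q\to P\oplus Q\to I\to 0$, with maps $w\mapsto(w,w)$ and $(a,b)\mapsto a-b$, and pass to the induced long exact sequence in $\Tor_\bullet(-,K)$. Let $\phi_i\colon\Tor_i(P\cap Q,K)\to\Tor_i(P,K)\oplus\Tor_i(Q,K)$ denote the map induced by the two inclusions. Breaking the long exact sequence into the short exact pieces $0\to\operatorname{coker}\phi_i\to\Tor_i(I,K)\to\ker\phi_{i-1}\to 0$ and taking graded dimensions gives $\beta_{i,j}(I)=\beta_{i,j}(P)+\beta_{i,j}(Q)+\beta_{i-1,j}(P\cap Q)-\operatorname{rank}(\phi_i)_j-\operatorname{rank}(\phi_{i-1})_j$. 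Hence $I=P+Q$ is a Betti splitting \emph{if and only if} $\phi_i=0$ for all $i$, and it suffices to prove that both components, $\alpha_i\colon\Tor_i(P\cap Q,K)\to\Tor_i(P,K)$ and $\gamma_i\colon\Tor_i(P\cap Q,K)\to\Tor_i(Q,K)$, vanish.

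For $\alpha_i$ I would use the $\ZZ^n$-multigrading together with the fact that the nonzero multigraded Betti numbers of a monomial ideal occur only in multidegrees that are least common multiples of minimal generators. Since $P\cap Q\subseteq Q$, every minimal generator of $P\cap Q$ is a multiple of a generator of $Q$, hence divisible by $x_i$; therefore $\beta_{i,\mathbf{a}}(P\cap Q)=0$ unless $a_i\ge 1$. Dually, every generator of $P$ is coprime to $x_i$, so every lcm of such generators again has $x_i$-exponent $0$, giving $\beta_{i,\mathbf{a}}(P)=0$ unless $a_i=0$. The two multidegree supports are disjoint, so $\alpha_i$ is the zero map in every multidegree.

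Finally, for $\gamma_i$ I would invoke the hypothesis that $Q$ has a linear resolution. Linearity forces $Q$ to be generated in a single degree $d$, so $\beta_{i,j}(Q)=0$ unless $j=i+d$. On the other hand, because $G(I)=G(P)\sqcup G(Q)$ are exactly the minimal generators of $I$, no generator of $P$ divides a generator of $Q$; thus every minimal generator of $P\cap Q$, being of the form $\mcm(p,q)$ with $p\in G(P),\,q\in G(Q)$, is a proper multiple of $q$ and so has degree $\ge d+1$. Consequently $\beta_{i,j}(P\cap Q)=0$ for $j<i+d+1$, which is again disjoint from the single degree $j=i+d$ supporting $\Tor_i(Q,K)$; hence $\gamma_i=0$. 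Combining the two vanishings yields $\phi_i=0$ for all $i$, proving the Betti splitting. The step that most needs care — the \emph{main obstacle} — is the support bookkeeping: the argument works only because the $P$-component is separated by the fine grading in the coordinate $i$, while the $Q$-component is separated by the total degree, where the gap $d$ versus $\ge d+1$ is exactly what the linear resolution of $Q$ supplies; I would make sure to state precisely that minimal multigraded Betti numbers are supported on lcms of generators, since both support claims rest on it.
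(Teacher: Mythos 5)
Your argument is correct. Note that the paper itself gives no proof of this lemma: it is quoted verbatim from \cite[Corollary 2.7]{FHT2009}, so there is nothing internal to compare against; what you have written is essentially a self-contained reconstruction of the Francisco--H\`a--Van Tuyl proof. The reduction via the long exact sequence of $0\to P\cap Q\to P\oplus Q\to I\to 0$ to the vanishing of the connecting inclusions $\operatorname{Tor}_i(P\cap Q,K)\to \operatorname{Tor}_i(P,K)\oplus\operatorname{Tor}_i(Q,K)$ is exactly their Betti-splitting criterion, and your two support arguments are sound: every monomial of $Q$, hence every minimal generator of $P\cap Q$, is divisible by $x_i$, while no lcm of generators of $P$ is, which kills the $P$-component multidegree by multidegree (using that multigraded Betti numbers are supported on lcms of minimal generators, e.g.\ via the Taylor complex); and since $G(I)=G(P)\sqcup G(Q)$ is a minimal generating set, no $p\in G(P)$ divides any $q\in G(Q)$, so the generators $\operatorname{lcm}(p,q)$ of $P\cap Q$ all have degree at least $d+1$, putting $\operatorname{Tor}_i(P\cap Q,K)$ in internal degrees $\ge i+d+1$, disjoint from the single strand $j=i+d$ that linearity allows for $\operatorname{Tor}_i(Q,K)$. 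The only points that deserve the explicit care you already flag are the lcm-support fact and the standard bound $\beta_{i,j}(M)=0$ for $j<i+\operatorname{indeg}(M)$; both are classical, so the proof stands.
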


For $I$ a monomial ideal of $S$, we set $\gcd(I)=\gcd(u:u\in G(I))$. 

\begin{Thm}\label{Thm:gcd(I)PcapQ}
	Let $u=x_{i_1}x_{i_2}\dots x_{i_d}$, $v=x_{j_1}x_{j_2}\cdots x_{j_d}$ be $t$--spread monomials of degree $d\ge 2$ of $S$ with $i_1=1$, $j_1=2$ and let $I$ be the $t$--spread lexsegment ideal $I=(\mathcal{L}_t(u,v))$ of $S$. Assume $3+(d-1)t\le n\le 3+(2d-3)t$ and set
	$$
	P=(\mathcal{L}_t(u,x_1x_{n-(d-2)t}\cdots x_{n-t}x_n)),\ \ \ \ Q=(\mathcal{L}_t(x_2x_{2+t}\cdots x_{2+(d-1)t},v)).
	$$
	Then $I$ is Cohen--Macaulay if and only if $\gcd(I)=1$ and $P\cap Q$ is a principal ideal.
\end{Thm}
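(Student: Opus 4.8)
The plan is to present $I$ as a Betti splitting $I=P+Q$ and to reduce Cohen--Macaulayness to a projective-dimension count. Since $\min(u)=1$ and $\min(v)=2$, every monomial of $\mathcal{L}_t(u,v)$ has smallest index $1$ or $2$; those with smallest index $1$ are exactly the generators of $P$ and those with smallest index $2$ are exactly the generators of $Q$, so $G(I)=G(P)\sqcup G(Q)$ and $I=P+Q$. Writing $P=x_1\,(\mathcal{L}_t^f(u/x_1))$ and $Q=x_2\,(\mathcal{L}_t^i(v/x_2))$ displays $P$ (resp. $Q$) as a variable times a final (resp. initial) $t$--spread lexsegment ideal generated in the single degree $d-1$. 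In particular $P$ has a linear resolution, being $x_1$ times a $t$--spread strongly stable ideal (for the reversed order $x_n>\dots>x_1$) generated in one degree; as $P$ is generated by precisely the elements of $G(I)$ divisible by $x_1$, Lemma \ref{Lem:BettiSplitxi} shows $I=P+Q$ is a Betti splitting. Hence $\beta_{i,j}(I)=\beta_{i,j}(P)+\beta_{i,j}(Q)+\beta_{i-1,j}(P\cap Q)$ for all $i,j$, and by (\ref{eq:pd(I)j1=2BettiSplit}), $\pd(I)=\max\{\pd(P),\pd(Q),\pd(P\cap Q)+1\}$.

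Next I would compute $\dim(S/I)$ and convert the Cohen--Macaulay property into a statement about $\pd(I)$. From the standard primary decomposition (Theorem \ref{primdecompgeneraltspreadlex}) one sees that $\p_{F_1}=(x_1,x_2)$ is always a minimal prime of $I$, so $\het(I)\le2$; and $\het(I)=1$ occurs exactly when some $(x_k)$ is a minimal prime, i.e. when $\gcd(I)\neq1$, while $\het(I)=2$ exactly when $\gcd(I)=1$. By Auslander--Buchsbaum, $\depth(S/I)=n-1-\pd(I)$, so $I$ is Cohen--Macaulay iff $\pd(I)=\het(I)-1$. If $\gcd(I)\neq1$ this would force $\pd(I)=0$, i.e. $I$ principal, which is impossible since $u\neq v$; thus $\gcd(I)=1$ is necessary, and under $\gcd(I)=1$ we have $\het(I)=2$ and $I$ is Cohen--Macaulay iff $\pd(I)=1$. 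Reading the vanishing of $\beta_{2,\ast}(I)$ off the Betti splitting, $\pd(I)=1$ is equivalent to the three conditions $\pd(P)\le1$, $\pd(Q)\le1$ and $\pd(P\cap Q)=0$, the last meaning precisely that $P\cap Q$ is principal.

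The remaining and decisive point is to remove the two conditions $\pd(P)\le1$ and $\pd(Q)\le1$. Applying (\ref{Bettinumbersreversetspread}) to $\mathcal{L}_t^f(u/x_1)$ and (\ref{eq1}) to $\mathcal{L}_t^i(v/x_2)$ (after reindexing the smaller rings these ideals live in) gives $\pd(P)=n-i_2-(d-2)t$ and $\pd(Q)=j_d-2-(d-1)t$, so that $\pd(P)\le1\iff i_2\ge n-1-(d-2)t$ and $\pd(Q)\le1\iff j_d\le3+(d-1)t$. The plan is to prove that, in the regime $3+(d-1)t\le n\le3+(2d-3)t$, the completely $t$--spread lexsegment hypothesis forces both of these numerical inequalities: the folding condition of Theorem \ref{thm:compltspreadlex}(1)(b) --- every $w<_{\slex}v$ admits $s>1$ with $x_1(w/x_s)\le_{\lex}u$ --- is what keeps $i_2$ large, while the upper bound $n\le3+(2d-3)t$ keeps $j_d=\max(v)$ small. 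Granting this, $\pd(P),\pd(Q)\le1$ hold automatically, and $\pd(I)=1\iff P\cap Q$ is principal, which together with the necessity of $\gcd(I)=1$ established above yields both implications of the theorem.

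I expect this last step to be the main obstacle: translating the purely order-theoretic completely-lexsegment condition of Theorem \ref{thm:compltspreadlex}(1)(b) into the sharp numerical bounds $i_2\ge n-1-(d-2)t$ and $j_d\le3+(d-1)t$ is delicate, and it is exactly here that the hypothesis $n\le3+(2d-3)t$ must be used. Finally, the degenerate possibilities that $I$ is itself initial or final (i.e. $u=\max M_{n,d,t}$; the case $v=\min M_{n,d,t}$ not arising since $\min(v)=2<n-(d-1)t$) should be excluded or handled directly through Corollaries \ref{cor:JCohenMac} and \ref{cor:TCohenMac}, where Cohen--Macaulayness is equivalent to being the Veronese ideal.
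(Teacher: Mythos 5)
Your overall reduction --- splitting $I=P+Q$ by the minimum of the generators, verifying it is a Betti splitting via Lemma \ref{Lem:BettiSplitxi}, identifying $\het(I)=2$ with $\gcd(I)=1$ through the minimal prime $(x_1,x_2)$, and translating Cohen--Macaulayness into $\pd(I)=1$, hence into $\pd(P)\le 1$, $\pd(Q)\le 1$ and $P\cap Q$ principal --- is exactly the paper's skeleton (your choice to apply the splitting lemma to the $x_1$--part $P$ rather than to $Q$ is fine). The gap is precisely at what you yourself flag as the ``decisive point''. You propose to dispose of $\pd(P)\le1$ and $\pd(Q)\le1$ by showing that the hypotheses \emph{force} $i_2\ge n-1-(d-2)t$ and $j_d\le 3+(d-1)t$, invoking a ``completely $t$--spread lexsegment hypothesis'' that is not in the statement of the theorem. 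This claim is false. Take $n=4$, $d=2$, $t=1$, $u=x_1x_2$, $v=x_2x_4$: all hypotheses hold (and $I$ even happens to be completely lexsegment), yet $P=(x_1x_2,x_1x_3,x_1x_4)$ has $\pd(P)=2$ and $i_2=2<3=n-1-(d-2)t$. Likewise $v=x_2x_4x_6$ with $n=6$, $d=3$, $t=1$ gives $\pd(Q)=2$. (A secondary inaccuracy: for the initial lexsegment $Q$ the extremal generator in formula (\ref{eq1}) need not be $v$, so $\pd(Q)=r-2-(d-1)t$ with $r=\max\{\max(w):w\in G(Q)\}\ge j_d$; your formula is only a lower bound.)

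These two inequalities are not consequences of the hypotheses; they are genuinely restrictive conditions on $u$ and $v$, and establishing them is the mathematical core of the proof. The paper obtains them, in the forward direction, from the \emph{purity} of the simplicial complex of a Cohen--Macaulay $I$: since $[n]\setminus\{1,2\}$ is a facet, $\Delta$ must be pure of dimension $n-3$, i.e.\ $I$ is unmixed of height $2$, and then explicit monomial primes (namely $\p_{\{1\}\cup E}$ with $E=\{\max(w):w\in\L_t(v_1,v)\}$, and $\p_{[i_2,\,n-(d-2)t]\cup\{2\}}$) are shown to contain minimal primes of height $2$ only if $v\in\{v_1,v_2\}$ and $u\in\{u_0,\dots,u_{d-1}\}$ --- exactly the memberships that yield $\pd(Q)\le1$ and $\pd(P)\le1$. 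Your sketch contains no trace of this purity/minimal-prime analysis, and one must also still account for why, in the converse direction, $\gcd(I)=1$ together with $P\cap Q$ principal excludes $\pd(P)\ge2$ or $\pd(Q)\ge2$. As written, the proposal does not close either implication.
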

\begin{proof}
	Let $I$ be Cohen--Macaulay and let $\Delta$ be the simplicial complex on the vertex set $[n]$ associated to $I$. Then $\Delta$ is pure. Since each monomial in $G(I)$ has minimum equal to 1 or 2, the set $[n]\setminus\{1,2\}$ is a facet of $\Delta$. Thus, $\Delta$ must be pure of dimension $\dim(\Delta)=n-3$.
	
	Now, we prove that $v\in \{v_1, v_2\}$, where
	\begin{equation}\label{moncase2.2}
	\begin{aligned}
	v_1&=\textstyle\phantom{\big(}\prod_{s=0}^{d-1}x_{2+st},\\
	v_2&=\textstyle\big(\prod_{s=0}^{d-2}x_{2+st}\big)x_{3+(d-1)t}.
	\end{aligned}
	\end{equation}
	First, note that $v_1>v_2$ are the greatest monomials of $M_{n,d,t}$ with minimum 2. So $v_1\in I$. Let $E=\big\{\max(w):w\in\mathcal{L}_t(v_1,v)\big\}$ and set $k=\max(E)$. Then $k\ge 2+(d-1)t$. Indeed $v_1\in I$. If $v=v_1$, there is nothing to prove. Assume $v>v_1$, then $|E|\ge2$. We show that $v=v_2$. The monomial prime ideal $\p_{\{1\}\cup E}$ contains $I$, and $\textup{height}(\p_{\{1\}\cup E})=1+|E|\ge 3$. Since $I$ is Cohen--Macaulay, thus height--unmixed of height two, there must be a minimal prime $\p\in\Min(I)$ of height $2$ that contains properly $\p_{\{1\}\cup E}$. For all $j\in E$, $\p_{\{1\}\cup(E\setminus\{j\})}$ does not contain $I$. Indeed, the monomial $\big(\prod_{s=0}^{d-2}x_{2+st}\big)x_j\in\mathcal{L}_t(v_1,v)$ is not in $\p_{\{1\}\cup(E\setminus\{j\})}$. Hence, $x_1$ can be omitted and $\p_{E}$ must contain $I$. One can quickly observe that no variable $x_j$, $j\in E$, can be omitted again. Hence, $\p_{E}$ is a minimal prime with $\textup{height}(\p_{E})=|E|=2$, \emph{i.e.}, $k=2+(d-1)t$. So $v\in\{v_1,v_2\}$ as desired.\\
	
	For $\ell=0,\dots,d-1$, let us define the following monomials:
	\begin{equation}\label{moncase2.3}
	\begin{aligned}
	u_\ell\ &=\ x_1\big(\textstyle\prod_{s=\ell}^{d-2}x_{n-st-1}\big)\big(\prod_{s=0}^{\ell-1}x_{n-st}\big)\\
	&=\ x_1x_{n-(d-2)t-1}\cdots x_{n-\ell t-1}x_{n-(\ell-1)t}\cdots x_{n-t}x_n.
	\end{aligned}
	\end{equation}
	
	One can observe that $\mathcal{L}_t(u_0,u_{d-1})=\{u_0>u_1>\dots>u_{d-1}\}$ and furthermore $u_{d-1}$ is the smallest monomial of $M_{n,d,t}$ whose minimum is equal to $1$. Thus $u>u_{d-1}>v$ and $u_{d-1}\in I$. 
We prove that $u=u_\ell$, for some $\ell\in\{0,\dots,d-1\}$.\\
Indeed, writing $u=x_1x_{i_2}\cdots x_{i_d}$, we note that $i_2\le n-(d-2)t$. The condition $u\le u_0$ is equivalent to $i_2\ge n-(d-2)t-1$. As before, we consider a suitable monomial prime ideal that contains $I$, namely $\p_{[i_2,n-(d-2)t]\cup\{2\}}$. None of $x_j$ with $j\in[i_2,n-(d-2)t]$ can be omitted from $\p_{[i_2,n-(d-2)t]\cup\{2\}}$. Thus, $\p_{[i_2,n-(d-2)t]}$ must be a minimal prime of height two. Hence, $i_2=n-(d-2)t-1$, as desired.

By what shown so far, if $I$ is Cohen--Macaulay, then $I=(\mathcal{L}_t(u_\ell,v_k))$, for some $\ell\in\{0,\dots,d-1\}$ and $k\in\{1,2\}$. 
On the other hand, $I$ is Cohen--Macaulay if and only if $\dim(S/I)=\textup{depth}(S/I)$. We note that $\dim(S/I)=n-2$. Indeed, we have shown previously that $\textup{height}(I)=2$.
By the Auslander--Buchsbaum formula, $\textup{depth}(S/I)=n-\pd(S/I)=n-1-\pd(I)$. Hence, $I$ is Cohen--Macaulay if and only if $\textup{height}(I)=2$ and $\pd(I)=1$. 

Hence, in order to establish the theorem we need to prove the next facts.\\
\textsc{Claim 1.} $\height(I)=2$ if and only if $\gcd(I)=\gcd(w:w\in G(I))=1$.\\ 
\textsc{Proof of Claim 1.}  Indeed, $\p_{[2]}=(x_1,x_2)$ is a minimal prime of $I$ having height two, so $\height(I)\le\height(\p_{[2]})=2$. Moreover, $\height(I)=1$ if and only if there is a minimal prime $\mathfrak{q}$ of $I$ whose height is $1$, \emph{i.e.}, there exists a variable $x_s$ dividing all generators of $I$. The claimed statement follows.\\
\textsc{Claim 2.} $\pd(I)=1$ if and only if the ideal $(\mathcal{L}_t(u,u_{d-1}))\cap(\mathcal{L}_t(v_1,v))=P\cap Q$ is a principal ideal.\smallskip\\
\textsc{Proof of Claim 2.} We can observe that
	$$
	P=(w\in G(I):\min(w)=1),\ \ \ Q=(w\in G(I):\min(w)=2).
	$$
	Clearly $I=P+Q$. We claim that $I=P+Q$ is a Betti splitting of $I$. Indeed using Lemma \ref{Lem:BettiSplitxi}, it sufficies to note that $Q=(w\in G(I):x_2\ \textup{divides}\ w)$ and that $Q$ has a linear resolution. For all monomials $w\in G(I)$ with $\min(w)=1$, $\min(w/x_1)\ge n-(d-2)t-1>2$, as by hypothesis $n\ge 3+(d-1)t>3+(d-2)t$. Moreover $Q$ is an equigenerated initial $t$--spread lexsegment ideal in $K[x_2,\dots,x_n]$, so it has a linear resolution. Hence, by (\ref{eq:pd(I)j1=2BettiSplit}),
	$$
	\pd(I)\ =\ \max\{\pd(P),\pd(Q),\pd(P\cap Q)+1\}.
	$$
	We have $\pd(P)\le1$. Indeed, $P$ is isomorphic to the final $t$--spread lexsegment ideal $(\mathcal{L}_t(u/x_1,x_{n-(d-2)t}\cdots x_{n-t}x_n))$ generated in degree $d-1$, and formula (\ref{Bettinumbersreversetspread}) gives
	$$
	\pd(I)=\begin{cases}
	n-(n-(d-2)t-1)-(d-2)t=1&\textup{if}\ u>u_{d-1},\\
	n-(n-(d-2)t)-(d-2)t=0&\textup{if}\ u=u_{d-1}.
	\end{cases}
	$$
	Analogously, as $Q$ is an initial $t$--spread lexsegment ideal in $K[x_2,\dots,x_n]$, $\pd(Q)=0$ if $v=v_1$, or $\pd(Q)=1$ if $v=v_2$. Thus, (\ref{eq:pd(I)j1=2BettiSplit}) implies that $\pd(I)=1$ if and only if $\pd(P\cap Q)=0$, \emph{i.e.}, if and only if $P\cap Q$ is a principal ideal.
\end{proof}

The next result points out that for $n\gg0$, precisely $n\ge 4+(2d-3)t$, the Cohen--Macaulay $t$--spread lexsegment ideals with $j_1=2$ are just complete intersection ideals.
\begin{Thm} \label{thm:unmixed}
	Let $u=x_{i_1}x_{i_2}\dots x_{i_d}$ and $v=x_{j_1}x_{j_2}\cdots x_{j_d}$ be $t$--spread monomials of degree $d\ge 2$ of $S$ with $i_1=1$, $j_1=2$, and let $I$ be the $t$--spread lexsegment ideal $I=(\mathcal{L}_t(u,v))$  of $S$. Assume $n\ge 4+(2d-3)t$, then the following conditions are equivalent:
	\begin{enumerate}
		\item[\textup{(a)}] $I$ is Cohen--Macaulay;
		\item[\textup{(b)}] $u=x_{1}x_{n-(d-2)t}\cdots x_{n-t}x_n$, $v=x_{2}x_{2+t}\cdots x_{2+(d-1)t}$.
	\end{enumerate}
\end{Thm}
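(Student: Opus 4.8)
The plan is to treat the two implications separately, leaning on Theorem \ref{Thm:gcd(I)PcapQ} for the structural input and supplying, for the range $n\ge 4+(2d-3)t$, only a support computation.

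For (b)$\Rightarrow$(a): under (b) we have $u=u_{d-1}=x_1x_{n-(d-2)t}\cdots x_n$ and $v=v_1=x_2x_{2+t}\cdots x_{2+(d-1)t}$, in the notation of (\ref{moncase2.2})--(\ref{moncase2.3}). Since $u_{d-1}$ is the smallest $t$--spread monomial of degree $d$ with minimum $1$ and $v_1$ is the largest with minimum $2$, no $t$--spread monomial lies strictly between them; hence $\mathcal{L}_t(u,v)=\{u,v\}$ and $I=(u,v)$. The hypothesis $n\ge 4+(2d-3)t$ is exactly what yields $n-(d-2)t>2+(d-1)t$, that is $\supp(u)\cap\supp(v)=\emptyset$. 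Two monomials with disjoint support form a regular sequence, so $I$ is a complete intersection and therefore Cohen--Macaulay.

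For (a)$\Rightarrow$(b) I would first observe that the purity argument opening the proof of Theorem \ref{Thm:gcd(I)PcapQ} gives $\height(I)=2$, and that the conclusions drawn there — namely $v\in\{v_1,v_2\}$, $u=u_\ell$ for some $\ell\in\{0,\dots,d-1\}$, and the equivalence ``$I$ Cohen--Macaulay $\iff$ $\gcd(I)=1$ and $P\cap Q$ principal'' via the Betti splitting $I=P+Q$ — use only that $I$ is height--unmixed of height two and that $n\ge 3+(d-1)t$, never the upper bound $n\le 3+(2d-3)t$. Thus these facts carry over verbatim, and it remains to decide, for $n\ge 4+(2d-3)t$, which pairs $(u_\ell,v_k)$ make $P\cap Q$ principal. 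The decisive observation is a support estimate: every generator of $P$ has minimum $1$ and second smallest index at least $n-(d-2)t-1$, while $\max(v_1)=2+(d-1)t$ and $\max(v_2)=3+(d-1)t$. The inequality $n\ge 4+(2d-3)t$ forces both $n-(d-2)t-1>2+(d-1)t$ and $n-(d-2)t>3+(d-1)t$, whence $\gcd(p,v_1)=1$ for every $p\in G(P)$ and $\gcd(u_{d-1},v_j)=1$ for $j=1,2$ (recall $u_{d-1}\in G(P)$ always).

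I would then split into cases. If $v=v_2$, then $u_{d-1}v_1=\mcm(u_{d-1},v_1)$ and $u_{d-1}v_2=\mcm(u_{d-1},v_2)$ both lie in $P\cap Q$ and are incomparable (since $v_1\nmid v_2$ and $v_2\nmid v_1$), so $P\cap Q$ is not principal. If $v=v_1$ but $u=u_\ell$ with $\ell<d-1$, then $\mcm(p,v_1)=pv_1$ for all $p\in G(P)$, so $P\cap Q=v_1P$, which is non-principal because $P$ has the $d-\ell\ge 2$ distinct minimal generators $u_\ell,\dots,u_{d-1}$. The only surviving possibility is $u=u_{d-1}$, $v=v_1$, where $P=(u_{d-1})$, $Q=(v_1)$ and $P\cap Q=(u_{d-1}v_1)$ is principal; by Theorem \ref{Thm:gcd(I)PcapQ} this is precisely the Cohen--Macaulay case, and it is statement (b). I expect the main obstacle to be the careful bookkeeping that legitimizes transferring the structural half of Theorem \ref{Thm:gcd(I)PcapQ} to the new range without recopying its proof, together with the sharp support inequalities at the boundary $n=4+(2d-3)t$: testing $v_2$ against the specific generator $u_{d-1}$ (rather than an arbitrary $p\in G(P)$) is essential, since a smaller generator such as $u_0$ can share the variable $x_{3+(d-1)t}$ with $v_2$ at the extreme value of $n$, which would spoil the disjointness argument.
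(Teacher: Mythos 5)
Your proof is correct, and for the implication (a)$\Rightarrow$(b) it follows a genuinely different route from the paper's. The paper argues directly on the simplicial complex: purity forces $\dim(\Delta)=n-3$, and two carefully chosen monomial primes ($\p_{[i_2,n-(d-2)t]\cup\{2\}}$ and $\q=(x_1,x_{2+(d-1)t},\dots,x_r)$) are squeezed down to height two, which pins $u$ and $v$ exactly to the pair in (b); the inequality $n\ge 4+(2d-3)t$ enters through the support computation (\ref{eq:calc4+(2d-3)t}) showing $w=x_2x_{2+t}\cdots x_{2+(d-1)t}$ escapes the candidate prime. You instead recycle the machinery of Theorem \ref{Thm:gcd(I)PcapQ}: the reduction to $u=u_\ell$, $v\in\{v_1,v_2\}$ and the Betti-splitting criterion ``$I$ Cohen--Macaulay $\iff$ $\gcd(I)=1$ and $P\cap Q$ principal'', followed by an elementary lcm/support case analysis of $P\cap Q$. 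This is legitimate, but it carries an obligation the paper avoids: Theorem \ref{Thm:gcd(I)PcapQ} is \emph{stated} only for $3+(d-1)t\le n\le 3+(2d-3)t$, so you cannot cite it as a black box and must re-certify that its proof never invokes the upper bound. I checked this and you are right --- the purity arguments yielding $u=u_\ell$ and $v\in\{v_1,v_2\}$, and the splitting $I=P+Q$ with $\min(w/x_1)>2$ for $w\in G(P)$, use only unmixedness of height two and the lower bound on $n$ --- but in a written version this verification should be made explicit (or the two theorems merged). Two smaller points: the non-principality of $P\cap Q$ when $v=v_2$ needs slightly more than ``$u_{d-1}v_1$ and $u_{d-1}v_2$ are incomparable''; the clean statement is that any generator $m$ of a principal $P\cap Q$ would divide $u_{d-1}\mcd(v_1,v_2)$ while being divisible by $v_1$ or $v_2$, which is impossible since $\max(v_1)$ and $\max(v_2)$ both miss $\supp(u_{d-1}\mcd(v_1,v_2))$. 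And your parenthetical that at $n=4+(2d-3)t$ one must test $v_2$ against $u_{d-1}$ rather than $u_0$ (which may share $x_{3+(d-1)t}$ with $v_2$) is exactly the right caution. What your approach buys is a uniform treatment of the two height-two regimes via a single algebraic criterion; what the paper's buys is independence from the earlier theorem and a shorter, self-contained purity argument. The (b)$\Rightarrow$(a) direction is identical in both: disjoint supports give a complete intersection.
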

\begin{proof}
	Let $\Delta$ be the simplicial complex on vertex set $[n]$ associated to $I$.
	\smallskip\\
	(a)$\Longrightarrow$(b). If $I=I_\Delta$ is Cohen--Macaulay, then $\Delta$ is pure. The set $[n]\setminus\{1,2\}$ is a facet of $\Delta$. Indeed, any monomial $w\in I$ has $1\in\supp(w)$ or $2\in\supp(w)$. Therefore, $\Delta$ pure implies $\dim(\Delta)=n-3$.
	Consider the set $G=[n]\setminus\{2,i_2,i_2+1,\dots,n-(d-2)t\}$. Observe that $G\in\Delta$, indeed $\p_{[n]\setminus G}=(x_2,x_{i_2},x_{i_2+1},\dots,x_{n-(d-2)t})$ is a monomial prime ideal that contains $I$. Now, the condition $\Delta$ is pure implies that there exists a minimal prime $\p\in\Min(I)$ of height two such that $\p\subseteq\p_{[n]\setminus G}$. The $t$--spread monomial $w=x_2x_{2+t}\cdots x_{2+(d-1)t}\in I$ as $u>w\ge v$. We have $x_2\in\p$. In fact, if $x_2\notin\p$, then $\p\subseteq\p_{[i_2,n-(d-2)t]}$. We must have $\p=\p_{[i_2,n-(d-2)t]}$. Indeed, if we omit some $x_j$, $j\in[i_2,n-(d-2)t]$, then we can find a monomial $z\in G(I)$, with $\min(z)=1$ and $\min(z/x_1)=j$, and this monomial does not belong to $\p$, a contradiction. $\Delta$ pure implies $\alt(\p)=2$, thus $i_2=n-(d-2)t-1$. But $[n-(d-2)t-1,n-(d-2)t]\cap\supp(w)=\emptyset$. Indeed, $n\ge 4+(2d-3)t$, so
	\begin{equation}\label{eq:calc4+(2d-3)t}
	\begin{aligned}
	n-(d-2)t-2\ &\ge\ 4+(2d-3)t-(d-2)t-2=2+(d-1)t\\
	&=\ \max(w).
	\end{aligned}
	\end{equation}
	Thus $\max(w)<n-(d-2)t-1$. This implies that $w\in I\setminus\p$, a contradiction. Hence, $x_2\in\p$. Therefore, $\alt(\p)=2$ and $\p=(x_2,x_{i_2})$. The monomial $z=x_{1}x_{n-(d-2)t}\cdots x_{n-t}x_n$ belongs to $I$. We have $z\in\p$ if and only if $x_{i_2}$ divides $z$. Thus, as $1+t\le i_2\le n-(d-2)t$, we have $i_2=n-(d-2)t$ and $u=x_{1}x_{n-(d-2)t}\cdots x_{n-t}x_n$.\\
	Let $r=\max\{\max(w):w\in G(I)\,\, \mbox{and $\min(w)=2$}\}$. Observe that $r\ge 2+(d-1)t$, as $x_2x_{2+t}\cdots x_{2+(d-1)t}\in I$. The ideal $\q=(x_1,x_{2+(d-1)t},x_{3+(d-1)t},\dots,x_{r})$ is a minimal prime of $I$. The condition $\Delta$ pure implies $\alt(\q)=2$ and $r=2+(d-1)t$. Thus, $\max(v)=2+(d-1)t$ and $v=x_{2}x_{2+t}\cdots x_{2+(d-1)t}$.
	\smallskip\\
	(b)$\Longrightarrow$(a). Let $u=x_1x_{n-(d-2)t}\cdots x_{n-t}x_{n}$ and $v=x_2x_{2+t}\cdots x_{2+(d-1)t}$.  By (\ref{eq:calc4+(2d-3)t}), we have $\supp(u)\cap\supp(v)=\emptyset$. Moreover $I=(u,v)$. Thus, $I$ is a complete intersection and so it is Cohen--Macaulay.
\end{proof}
\begin{Rem}
 \em One can observe that under the hypotheses of Theorem \ref{thm:unmixed}, the classification for Cohen--Macaulay $t$--spread lexsegment ideal is equivalent to the classification for unmixed $t$--spread lexsegment ideal (see also, Remark \ref{rem:unmixed}).
\end{Rem}

\subsection{The general case}

In this subsection,  we classify all Cohen--Macaulay $t$--spread lexsegment ideals $I=(\mathcal{L}_t(u,v))$ of $S= K[x_1, \ldots, x_n]$ with $\min(v)>2$. 

\subsubsection{The underlying idea behind the classification}

Let $I=(\mathcal{L}_t(u,v))$  be a Cohen--Macaulay $t$--spread lexsegment ideal of $S$ with $u=x_{i_1}x_{i_2}\dots x_{i_d}$, $v=x_{j_1}x_{j_2}\cdots x_{j_d}\in M_{n, d, t}$ such that
$\min(u)=i_1=1$, $\min(v)=j_2>2$, $d\ge2$. If  $\Delta$ is the simplicial complex on the vertex set $[n]$ associated to $I$,  then $\Delta$ is pure. This is a key property in order to get the classification. 
First, note that since $\min(u)=1$, then  
$$u\in\mathcal{L}_t(x_1x_{1+t}\cdots x_{1+(d-1)t},x_1x_{n-(d-2)t}x_{n-(d-3)t}\cdots x_{n-t}x_{n}).$$

Assume $u=x_1x_{1+t}\cdots x_{1+(d-1)t} = \max(M_{n, d, t})$, then $I=(\mathcal{L}_t(u,v))=(\mathcal{L}_t^i(v))$ is an initial $t$--spread lexsegment ideal. Hence, $\Delta$ is Cohen--Macaulay if and only if $I=I_{n,d,t}$ (Corollary \ref{cor:JCohenMac}). \\

Now, let $u<x_1x_{1+t}\cdots x_{1+(d-1)t}$. Then, $F=[1+(d-1)t]\in\Delta$. Moreover, $[1+(d-1)t]\cup\{k\}\notin\Delta$, for all $k\notin[1+(d-1)t]$. In fact, $j_1>2$ implies that $w=x_2x_{2+t}\cdots x_{2+(d-2)t}x_k\in I=I_\Delta$, thus $G=\supp(w)\notin\Delta$ and $G\subseteq F\cup\{k\}$. Therefore, $F$ is a facet of $\Delta$ and by the purity of $\Delta$ we have that
\begin{equation}\label{eq:dim(D)tLex(d-1)t}
\dim(\Delta)=|F|-1=(d-1)t.
\end{equation}
We show that $v\le x_{n-1-(d-1)t}x_{n-1-(d-2)t}\cdots x_{n-1-t}x_{n-1}$. Suppose on the contrary that $v> x_{n-1-(d-1)t}x_{n-1-(d-2)t}\cdots x_{n-1-t}x_{n-1}$, then $A=[n-1-(d-1)t,n]$ is a face of $\Delta$ and $\dim(\Delta)\ge\dim A=|A|-1=(d-1)t+1>(d-1)t=\dim(\Delta)$, a contradiction. Thus, $v\le x_{n-1-(d-1)t}x_{n-1-(d-2)t}\cdots x_{n-1-t}x_{n-1}$. Hence, 
$$v \in \{v_0>v_1>\dots>v_{d-1}>v_d=\min(M_{n,d,t})\},$$
where
\begin{equation}\label{eq:v_ellmons}
v_\ell=\Big(\prod_{s=\ell}^{d-1}x_{n-st-1}\Big)\Big(\prod_{s=0}^{\ell-1}x_{n-st}\Big), \quad \ell\in\{0,\dots,d\}.
\end{equation}
We can observe that $v\ne v_d$. Otherwise, $I=(\mathcal{L}_t(u,v))=(\mathcal{L}_t^f(u))$ is a final $t$--spread lexsegment ideal and so $I$ is Cohen--Macaulay if and only if $I$ is the $t$--spread Veronese ideal $I_{n,d,t}$ (Corollary \ref{cor:TCohenMac}). Thus $u=\max(M_{n,d,t})$, against the assumption that $u<\max(M_{n,d,t})$.\\
Finally, for $v$ there do exist three admissible choices: 
\begin{enumerate}
	\item[-] $v=v_{d-1}$, 
	\item[-] $t=1$ and $v=v_\ell$, for $\ell\in\{0,\dots,d-2\}$, 
	\item[-] $t>1$ and $v=v_\ell$, for $\ell\in\{0,\dots,d-2\}$.
\end{enumerate}

In particular, if $v=v_\ell=\big(\textstyle\prod_{s=\ell}^{d-1}x_{n-st-1}\big)\big(\prod_{s=0}^{\ell-1}x_{n-st}\big)$ ($\ell\in\{0,\dots,d-2\}$), then 
\begin{equation}\label{eq:importset2}
H=[n-(d-1)t-1,n]\setminus\{n-\ell t-1\}\in \Delta.
\end{equation}
Moreover, $|H|=1+(d-1)t$ and $\Delta$ pure implies that $H\cup\{1\}\notin\Delta$, \emph{i.e.}, ${\bf x}_{H\cup\{1\}}\in I=I_\Delta$.

\subsubsection{The classification}
Now we are in position to prove the main result in the article.
\begin{Thm}\label{Teor:ItLexCM>2}
	Let $u=x_{i_1}x_{i_2}\dots x_{i_d}$, $v=x_{j_1}x_{j_2}\cdots x_{j_d}$ be $t$--spread monomials of degree $d\ge 2$ of $S$ 
	with $i_1=1$, $j_1>2$ and let $I$ be the $t$--spread lexsegment ideal $I=(\mathcal{L}_t(u,v))$ of $S$. Assume $I\ne I_{n,d,t}$. Then $I$ is Cohen--Macaulay if and only if one of the following conditions holds true
	\begin{enumerate}
		\item[\textup{(a)}] $u=x_1x_{n-(d-2)t}\cdots x_{n-t}x_n$ and $v=x_{n-(d-1)t-1}\big(\textstyle\prod_{s=0}^{d-2}x_{n-st}\big)$;
		\item[\textup{(b)}] $t=1$ and for some integer $\ell\in\{0,\dots,d-2\}$,
		\begin{align*}
		u&=x_1\big(\textstyle\prod_{s=\ell+1}^{d-2}x_{n-s-1}\big)\big(\prod_{s=0}^{\ell}x_{n-s}\big),\ \ \ \ &v=\big(\textstyle\prod_{s=\ell}^{d-1}x_{n-s-1}\big)\big(\prod_{s=0}^{\ell-1}x_{n-s}\big);
		\end{align*}
		\item[\textup{(c)}] $t>1$ and for some integer $\ell\in\{0,\dots,d-2\}$,
		\begin{align*}
		u&\in\L_t\big(x_1\big(\textstyle\prod_{s=\ell}^{d-2}x_{n-st-1}\big)\big(\prod_{s=0}^{\ell-1}x_{n-st}\big),x_1\big(\prod_{s=0}^{d-2}x_{n-st}\big)\big),\\
		v&=\big(\textstyle\prod_{s=\ell}^{d-1}x_{n-st-1}\big)\big(\prod_{s=0}^{\ell-1}x_{n-st}\big).
		\end{align*}
	\end{enumerate}
\end{Thm}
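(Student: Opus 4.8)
The plan is to prove both implications by combining the purity reduction already carried out in the discussion preceding the statement with the primary-decomposition description of Theorem~\ref{primdecompgeneraltspreadlex} (for the forward direction) and a Betti-splitting computation of $\pd(I)$ (for the converse). Write $m=1+(d-1)t$. Recall from that discussion that we may assume $u<\max(M_{n,d,t})$, that $v\in\{v_0,\dots,v_{d-1}\}$, and that if $I$ is Cohen--Macaulay then $\Delta$ is pure with $\dim\Delta=(d-1)t$, so every facet of $\Delta$ has exactly $m$ elements and $\dim(S/I)=m$. By the Auslander--Buchsbaum formula this last equality is equivalent to $\pd(I)=n-1-m=n-2-(d-1)t$. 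It therefore remains, for each admissible $v=v_\ell$, to decide which $u$ yield a Cohen--Macaulay ideal.

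For the forward implication I would feed $v=v_\ell$ into Theorem~\ref{primdecompgeneraltspreadlex} and read off the cardinalities of the four families of facets. From the explicit indices of $v_\ell$ one computes $|F_p|=j_p-(p-1)t$, which equals $n-1-(d-1)t$ for $p\le d-\ell$ and $n-(d-1)t$ for $p>d-\ell$; the sets of $\mathcal G$ all have cardinality $m$, while those of $\widetilde{\mathcal F}$ have cardinality $(d-1)t$. Purity hence forces two things at once: $\widetilde{\mathcal F}=\emptyset$, and none of the ``short'' sets $[n]\setminus F_p$ with $p>d-\ell$ may survive as a facet, i.e. $p\notin\mathcal I$ for every such $p$. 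Unwinding the definition of $\mathcal I$ turns the second requirement into the inequalities $v/x_{j_p}>_{\slex}u/x_1$ for $p>d-\ell$ (the binding one being $p=d-\ell+1$), while $\widetilde{\mathcal F}=\emptyset$ yields a matching bound on $u$ coming from conditions (i)--(ii) of the theorem. Together these squeeze $u$ into exactly the ranges (a), (b), (c): the case $\ell=d-1$ gives (a) with $u$ the smallest $\min=1$ monomial, and the dichotomy between $t=1$ and $t>1$ for $\ell\le d-2$ appears because for $t=1$ the $t$-spread constraint is rigid and the two bounds coincide, pinning $u$ uniquely, whereas for $t>1$ there is slack and $u$ ranges over a genuine $\L_t(\cdot,\cdot)$.

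For the converse I would use the Betti splitting $I=P+Q$, with $P$ generated by the minimal generators divisible by $x_1$ (those of minimum $1$) and $Q$ by the remaining ones (minimum $\ge 2$). After factoring $x_1$, $P$ is a final $t$-spread lexsegment of degree $d-1$ in $K[x_{1+t},\dots,x_n]$, and $Q$ is an initial $t$-spread lexsegment of degree $d$ in $K[x_2,\dots,x_n]$; both are $t$-spread strongly stable in a single degree, hence have linear resolutions, so $I=P+Q$ is a Betti splitting by Lemma~\ref{Lem:BettiSplitxi} and $\pd(I)=\max\{\pd P,\pd Q,\pd(P\cap Q)+1\}$. The formula \eqref{eq1} (in fewer variables) gives $\pd(Q)=n-2-(d-1)t$, the target value, realised by the generator $x_2x_{2+t}\cdots x_{2+(d-2)t}x_n$ of largest support; formula \eqref{Bettinumbersreversetspread} gives $\pd(P)=n-\min(u/x_1)-(d-2)t$, which in cases (a)--(c) equals $0$ or $1$ and so does not exceed the target. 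What remains is to verify $\pd(P\cap Q)\le n-3-(d-1)t$: writing $P\cap Q=x_1R$ and identifying the minimal generators of $R$ as the $\operatorname{lcm}$'s straddling the $\min=1$ / $\min\ge2$ boundary, one checks the bound in each case, whence $\pd(I)=n-2-(d-1)t$, i.e. $\depth(S/I)=\dim(S/I)=m$ and $I$ is Cohen--Macaulay.

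The main obstacle I expect is precisely the analysis of $P\cap Q$: determining its minimal generators and projective dimension is the one step that is not a direct quotation of Section~\ref{sec2}, and it is also the step that separates the three cases and, inside (c), explains why an entire lexsegment of values of $u$ is admissible rather than a single monomial. On the forward side the analogous delicate point is the careful bookkeeping of the families $\mathcal I$ and $\widetilde{\mathcal F}$ in Theorem~\ref{primdecompgeneraltspreadlex}: one must ensure that every short facet is correctly detected and eliminated and that no facet of cardinality $(d-1)t$ is overlooked, since a single such facet would already destroy purity and hence the Cohen--Macaulay property.
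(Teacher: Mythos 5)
Your forward implication has a genuine gap: you deduce the conditions on $u$ from purity of $\Delta$ alone, but purity (equivalently, unmixedness of $I$) is strictly weaker than Cohen--Macaulayness in this setting and does \emph{not} force $u$ into the stated ranges. Concretely, take $n=7$, $d=2$, $t=2$, $v=v_0=x_4x_6$ and $u=x_1x_5$. Then $u>u_0=x_1x_6$, so condition (c) fails; yet the facets of $\Delta$ are exactly $\{1,2,3\}$, $\{1,3,4\}$, $\{4,5,7\}$, $\{5,6,7\}$, all of cardinality $3=1+(d-1)t$, so $\Delta$ is pure, $\widetilde{\mathcal F}=\emptyset$ and $\mathcal I=\{1,2\}$ with both sets $[n]\setminus F_p$ of the ``long'' size --- your bookkeeping of the families in Theorem \ref{primdecompgeneraltspreadlex} detects nothing wrong. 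The ideal is nevertheless not Cohen--Macaulay: it is a completely $2$--spread lexsegment ideal with a linear resolution and $\beta_{4}(I)=3-2=1>0$, so $\pd(I)=4$ and $\depth(S/I)=2<3=\dim(S/I)$. This is exactly why the paper does not argue via purity to bound $u$ from above: it first shows $I$ is completely $t$--spread lexsegment with a linear resolution and then computes $\beta_{n-1-(d-1)t}(I)$ via Theorem \ref{thm:compltspreadlex}(3), showing this top Betti number vanishes only for the $u$ listed in (a), (b), and (c). The same issue affects your cases (a) and (b): the set $H\cup\{1\}$ of (\ref{eq:importset2}) only yields a lower bound $u\ge z$, and the upper bound pinning $u$ down is a depth statement, invisible to the primary decomposition.

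On the converse, your Betti splitting $I=P+Q$ is legitimate (apply Lemma \ref{Lem:BettiSplitxi} with $x_i=x_1$, noting that the ideal of generators divisible by $x_1$ has a linear resolution), and the reduction to $\pd(P\cap Q)\le n-3-(d-1)t$ is correct in principle; but that intersection computation is precisely the substance of the proof and you leave it as ``one checks the bound in each case.'' The paper avoids it: in cases (a), (b) and the extremal subcase $u=u_\ell$ of (c) it computes $\beta_{n-2-(d-1)t}(I)>0$ and $\beta_{n-1-(d-1)t}(I)=0$ directly from the linear-resolution formula, and for the remaining $u=u_p$, $p>\ell$, in case (c) it combines the lower bound of Lemma \ref{lem:pdJ<I} with the short exact sequence $0\to S/(I_{u_{p}}:u_{p-1})\to S/I_{u_{p}}\to S/I_{u_{p-1}}\to 0$, identifying $I_{u_{p}}:u_{p-1}$ as a monomial prime. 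You would need to supply an argument of comparable precision for $P\cap Q$ before the converse can be considered proved; note also that $I$ need not have a linear resolution for every $u$ admitted by (c), so any uniform computation must be checked case by case.
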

\begin{proof} (a) Let $v=v_{d-1}=x_{n-(d-1)t-1}\big(\prod_{s=0}^{d-2}x_{n-st}\big)$. 
	The only monomial $w\in M_{n,d,t}$ with $w< v$ is $w=\min(M_{n,d,t})=\big(\textstyle\prod_{s=0}^{d-1}x_{n-st}\big)$. Since $x_1(w/x_{n-(d-1)t})\le u$, then $I$ is a completely $t$--spread lexsegment ideal (Theorem \ref{thm:compltspreadlex}(2)). Moreover, $I$ has a linear resolution (Theorem \ref{thm:compltspreadlex}(2)). Indeed, setting $u=x_{1}x_{i_2}\cdots x_{i_d}$, we have $i_2>1+t$ and 
	$$
	x_1x_{i_2-t}x_{i_3-t}\cdots x_{i_d-t}\ge x_1x_{n-(d-1)t}x_{n-(d-2)t}\cdots x_{n-t}=x_1(w/x_{\max(w)}).
	$$

	Note that the Cohen--Macaulayness of $\Delta$ implies $\dim(S/I)=\depth(S/I)$. From (\ref{eq:dim(D)tLex(d-1)t}), $\dim(S/I)=n-\height(I)=1+(d-1)t = \depth(S/I)$.

	On the other hand, by the Auslander--Buchsbaum formula, $\depth(S/I)=n-\pd(S/I)=n-1-\pd(I)$. Thus, $S/I$ is Cohen--Macaulay if and only if $\pd(I)=n-(d-1)t-2$. 
	Setting $a=n-1-(d-1)t$ and $b=a+d$, we can note that $\pd(I)\le n-(d-1)t-1$ if and only if (Theorem \ref{thm:compltspreadlex}(3)) 
	\begin{align*}
	\beta_{a}(I)= \beta_{a,b}(I)&=\sum_{w\in\mathcal{L}_t^f(u)}\binom{n-\min(w)-(d-1)t}{n-1-(d-1)t}-\sum_{\substack{w\in\mathcal{L}_t^f(v)\\ w\ne v}}\binom{\max(w)-(d-1)t-1}{n-1-(d-1)t}\\
	&=\big|\big\{ w\in\mathcal{L}_t^f(u):\min(w)=1\big\}\big|-\big|\big\{w\in\mathcal{L}_t^f(v)\setminus\{v\}:\max(w)=n\big\}\big|\\
	&=\big|\big\{ w\in\mathcal{L}_t^f(u):\min(w)=1\big\}\big|-1 = 0,
	\end{align*}
	\emph{i.e.}, $u=x_1\big(\prod_{s=0}^{d-2}x_{n-st}\big)$. Condition (a) follows.
	
	Conversely, let $u=x_1x_{n-(d-2)t}\cdots x_{n-t}x_n$ and $v=x_{n-(d-1)t-1}\big(\textstyle\prod_{s=0}^{d-2}x_{n-st}\big)$. We prove that $\dim(S/I) =\depth(S/I)$.
	
	First, using the same arguments as before, we can verify that $I$ is a completely $t$--spread lexsegment ideal which has a linear resolution.
	
 Setting $a = n-(d-1)t-1$ and $c=a-1=n-(d-1)t-2$, we have $m=\big|\{w\in\mathcal{L}_t^f(u):\min(w)=2\}\big|>1$ and 
	\begin{align*}
	\beta_{c}(I)&=\sum_{w\in\mathcal{L}_t^f(u)}\binom{n-\min(w)-(d-1)t}{n-2-(d-1)t}-\sum_{\substack{w\in\mathcal{L}_t^f(v)\\ w\ne v}}\binom{\max(w)-(d-1)t-1}{n-2-(d-1)t}
	\\
	&=\binom{a}{a-1}+m\binom{a-1}{a-1}-\binom{a}{a-1}=m>0.
	\end{align*}
	by  Theorem \ref{thm:compltspreadlex}(3). Thus, $\pd(I)=c=n-(d-1)t-2$. Hence, from the Auslander--Buchsbaum formula, $\depth(S/I)= 1+(d-1)t$.
	
	Let us prove that  $\dim(S/I) = 1+(d-1)t$.  It is sufficient to verify that $\dim(\Delta)=(d-1)t$.
	Using the notation of Theorem \ref{primdecompgeneraltspreadlex}, since $I$ is a completely $t$--spread lexsegment ideal, we have $\mathcal{F}(\Delta)=\mathcal{G}\cup\{[n]\setminus F_p:p\in\mathcal{I}\}\cup\widetilde{\mathcal{F}}$. 
	If $F\in\mathcal{G}$, then $|F|=1+(d-1)t$. 
	If $F\in\widetilde{\mathcal{F}}$, then $|F|=(d-1)t<1+(d-1)t$. Finally, we check that $\mathcal{I}=\{1\}$ and $|[n]\setminus F_1|=1+(d-1)t$. Indeed, for $p=1$, $[n]\setminus F_1=[j_1+1,n]=[n-(d-1)t,n]$ has cardinality $1+(d-1)t$, so $1\in\mathcal{I}$. Let $2\le p\le d$, then
	\begin{align*}
	[n]\setminus F_p&=\bigcup_{i=1}^{p-1}[j_i,j_i+(t-1)]\cup[j_p+1,n]\\
	&=\bigcup_{i=1}^{p-1}[j_i,j_i+(t-1)]\cup[n-(d-p)t+1,n]
	\end{align*}
	has cardinality $(p-1)t+n+1-(n-(d-p)t+1)=(d-1)t$ and moreover,
	$$
	\min(v/x_{j_p})=\min(v)=n-(d-2)t-1<n-(d-2)t-1\le\min(u/x_1).
	$$
	Thus $v/x_{j_p}>u/x_1$ and $p\notin\mathcal{I}$ and so $\mathcal{I}=\{1\}$ and $|[n]\setminus F_1|=1+(d-1)t$. Hence, $\dim(\Delta)=(d-1)t$ and consequently $\dim(S/I)=n-\height(I)=1+(d-1)t = \depth(S/I)$ and $I$ is Cohen--Macaulay.\\\\
(b) Let $t=1$. Let us consider the set $H$ defined in (\ref{eq:importset2}). We note that the smallest $1$--spread (squarefree) monomial of degree $d$ that we can ``extract" from $H\cup\{1\}=\{1,n-d,n-d+1,\dots,n-\ell-2,n-\ell,\dots,n-1,n\}$ is
	\begin{align*}
	z&=x_1x_{n-d+1}\cdots x_{n-\ell-2}\cdot x_{n-\ell}\cdots x_{n-1}x_n\\
	&=x_1\big(\textstyle\prod_{s=\ell+1}^{d-2}x_{n-s-1}\big)\big(\prod_{s=0}^{\ell}x_{n-s}\big).
	\end{align*}
	Thus, we must have $z\in I$, and so $u\ge z$, \emph{i.e.},
	$$
	u\in\L_t\big(x_1\big(\textstyle\prod_{s=\ell}^{d-2}x_{n-s-2}\big)\big(\prod_{s=0}^{\ell-1}x_{n-s-1}\big),x_1\big(\prod_{s=\ell}^{d-2}x_{n-s-1}\big)\big(\prod_{s=0}^{\ell+1}x_{n-s}\big)\big).
	$$
	Let $v=v_\ell$, for some $\ell\in \{0, \ldots, d-2\}$. The monomials $w\in M_{n,d,1}$ with $w< v$ are $v_{\ell+1}>\dots>v_d$. Since, for $r=\ell+1,\dots,d$, $$x_1(v_r/x_{n-(d-1)})\le x_1\big(\textstyle\prod_{s=\ell+1}^{d-2}x_{n-s-1}\big)\big(\prod_{s=0}^{\ell}x_{n-s}\big)\le u,$$ $I$ is a completely $1$--spread lexsegment ideal. Moreover, $I$ has a linear resolution. Indeed, setting $u=x_{1}x_{i_2}\cdots x_{i_d}$, we have $i_2>1+t=2$ and
	$$
	x_1x_{i_2-1}x_{i_3-1}\cdots x_{i_d-1}\ge x_1\big(\textstyle\prod_{s=\ell+1}^{d-2}x_{n-s-2}\big)\big(\prod_{s=0}^{\ell}x_{n-s-1}\big)> x_1(v_{\ell+1}/x_{\max(w)}).
	$$
	From (\ref{eq:dim(D)tLex(d-1)t}), $\dim(S/I) = 1+(d-1)t = d$ and, since we are assuming $I$ Cohen--Macaulay, then $\depth(I) =d$. 
	On the other hand from the Auslander--Buchsbaum formula, $S/I$ is Cohen--Macaulay if and only if $\pd(I)=n-d-1$.  
	Setting $a=n-d$ and $b=a+d$, 
	$\pd(I)\le n-d$, 
	$S/I$ is Cohen--Macaulay if and only if
	\begin{align*}
	\beta_{a}(I)=\beta_{a,b}(I)&=\big|\big\{ w\in\mathcal{L}_t^f(u):\min(u)=1\big\}\big|-\big|\big\{w\in\mathcal{L}_t^f(v)\setminus\{v\}:\max(w)=n\big\}\big|\\
		&=\big|\big\{ w\in\mathcal{L}_t^f(u):\min(w)=1\big\}\big|-\big|\big\{v_{\ell+1}>\dots>v_d\big\}\big|\\
	&=\big|\big\{ w\in\mathcal{L}_t^f(u):\min(w)=1\big\}\big|-(d-\ell)=0,
	\end{align*}
	\emph{i.e.}, $u=x_1\big(\textstyle\prod_{s=\ell+1}^{d-2}x_{n-s-1}\big)\big(\prod_{s=0}^{\ell}x_{n-s}\big)$. The assertion (b) follows.
	
	Conversely, let $u=x_1\big(\textstyle\prod_{s=\ell+1}^{d-2}x_{n-s-1}\big)\big(\prod_{s=0}^{\ell}x_{n-s}\big)$ and $v=v_\ell$, for some $\ell$ $\in\{0,\dots,d-2\}$. 
	We compute $\dim(S/I)$ and $\depth(S/I)$. 
	
	Using the notation of Theorem \ref{primdecompgeneraltspreadlex}, $\mathcal{F}(\Delta)=\mathcal{G}\cup\{[n]\setminus F_p:p\in\mathcal{I}\}\cup\widetilde{\mathcal{F}}$. We show that $\dim(\Delta)= d$. 
	If $F\in\mathcal{G}$, then $|F|=d$. 
	Arguing as in the proof of condition (b), one can observe that $\mathcal{I}=[d-\ell]$ and for $p\in\mathcal{I}$, $|[n]\setminus F_p|=d$. 
		Furthermore, for any $F\in{\mathcal{F}}$, $|F|=d-1<d$. Hence $\dim(\Delta)=d$ 
	and so $\dim(S/I)=d$.\\
	
	Setting $a=n-d$ 
	and $c=a-1=n-d-1$, 
	we have $\big|\{w\in \mathcal{L}_t^f(u):\min(w)=1\}\big|=d-\ell$, $m=\big|\{w\in\mathcal{L}_t^f(u):\min(w)=2\}\big|>1$ and
	\begin{align*}
	\beta_{c}(I)&=\sum_{w\in\mathcal{L}_t^f(u)}\binom{n-\min(w)- d+1}{n-d-1}-\sum_{\substack{w\in\mathcal{L}_t^f(v)\\ w\ne v}}\binom{\max(w)-d}{n-d-1}
	\\
	&=(d-\ell)\binom{a}{a-1}+m\binom{a-1}{a-1}-(d-\ell)\binom{a}{a-1}=m>0.
	\end{align*}
	Thus, $\pd(I)=n-d-1$ 
	and from the Auslander--Buchsbaum formula $\depth(I)=d$ and $I$ is Cohen--Macaulay. 
\end{proof}

For the proof of the last case (c), we need some more preparations.

First, recall that if $0\rightarrow L\rightarrow M\rightarrow N\rightarrow0$ is  a short exact sequence of finitely generated graded $S$--modules, then $\pd(M)\le\max\{\pd(L),\pd(N)\}$. Moreover, given a monomial ideal $I$ of $S$ and a monomial $u$ of $S$, there exists the short exact sequence
$$
0\rightarrow S/(I:u)\rightarrow S/I\rightarrow S/(I,u)\rightarrow0.
$$
Therefore  $\pd(S/I)\le\max\{\pd(S/(I:u)),\pd(S/(I,u))\}$.\smallskip

\begin{Lem}\label{lem:pdJ<I}
	Let $I,J$ equigenerated monomial ideals of $S$ having initial degree $d$. Suppose $J\subseteq I$, then
	$$
	\beta_{i,i+d}(J)\le\beta_{i,i+d}(I),
	$$
	for all $i\ge0$. Moreover, if $J$ has a linear resolution, $\pd(I)\ge\pd(J)$.
\end{Lem}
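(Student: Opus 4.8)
The plan is to compare the two ideals through the short exact sequence
$$0\to J\to I\to I/J\to 0$$
and to extract the linear strand from the induced long exact sequence in $\Tor$. First I would record the only structural input the argument needs: since $J\subseteq I$ and both ideals are generated in degree $d$, one has $(I/J)_j=I_j/J_j=0$ for all $j<d$, so $I/J$ is a graded module with $\indeg(I/J)\ge d$. Consequently the minimal shifts in its graded minimal free resolution strictly increase, which gives the vanishing $\beta_{p,j}(I/J)=0$ whenever $j<p+d$. This is the single fact that drives the whole proof.

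Next I would apply $-\otimes_S K$ to the sequence and inspect the long exact sequence of $\Tor$ in internal degree $i+d$:
$$\Tor_{i+1}(I/J,K)_{i+d}\ \longrightarrow\ \Tor_i(J,K)_{i+d}\ \longrightarrow\ \Tor_i(I,K)_{i+d}.$$
Applying the vanishing above with $p=i+1$ and $j=i+d$ (note $i+d<(i+1)+d$), the left-hand term $\Tor_{i+1}(I/J,K)_{i+d}=\beta_{i+1,i+d}(I/J)$ is zero. Hence the map $\Tor_i(J,K)_{i+d}\to\Tor_i(I,K)_{i+d}$ is injective, and taking $K$--dimensions gives exactly $\beta_{i,i+d}(J)\le\beta_{i,i+d}(I)$ for every $i\ge0$, which is the first assertion.

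For the second assertion I would use that a linear resolution of $J$ forces all of its nonzero graded Betti numbers into the linear strand, so that $\pd(J)=\max\{i:\beta_{i,i+d}(J)\ne0\}$. Setting $p=\pd(J)$, we have $\beta_{p,p+d}(J)\ne0$, and the inequality just established yields $\beta_{p,p+d}(I)\ge\beta_{p,p+d}(J)>0$; therefore $\pd(I)\ge p=\pd(J)$.

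I do not expect a genuine obstacle here, as the content is entirely the degree bookkeeping in the long exact sequence. The one point to state carefully is the vanishing $\beta_{i+1,i+d}(I/J)=0$, which rests on $I/J$ being generated in degrees $\ge d$ (and is vacuous if $I=J$); everything else is the standard behaviour of $\Tor$ on a short exact sequence together with the definition of a linear resolution.
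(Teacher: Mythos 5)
Your proof is correct and follows essentially the same route as the paper: the paper disposes of the first inequality by citing \cite[Lemma 10.3.5]{JT}, whose standard proof is exactly your long exact sequence of $\Tor$ applied to $0\to J\to I\to I/J\to 0$ together with the vanishing $\beta_{i+1,i+d}(I/J)=0$ coming from $\indeg(I/J)\ge d$. The deduction of $\pd(I)\ge\pd(J)$ from the linearity of the resolution of $J$ is verbatim the paper's argument.
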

\begin{proof} The first assertion follows from \cite[Lemma 10.3.5]{JT}. For the second assertion, assume $J$ has a linear resolution, and let $\pd(J)=s$. Then $\beta_s(J)=\beta_{s,s+d}(J)\le\beta_{s,s+d}(I)$. Finally, $\pd(I)\ge s=\pd(J)$.
\end{proof}

Now we are in position to prove the last statement of Theorem \ref{Teor:ItLexCM>2}.\\\\
(c) Let $t\ge2$. From (\ref{eq:importset2}), the smallest $t$--spread monomial we can ``extract" from $H\cup\{1\}$ is
	$$
	z=x_1x_{n-(d-2)t}x_{n-(d-3)t}\cdots x_{n-t}x_n.
	$$
	Since $z$ is the smallest $t$--spread monomial of $S$ with minimum equal to $1$, then $z\in I$, and there are no further conditions to impose on the monomial $u$. Thus
	\begin{equation}\label{eq:Setcase(d)}
	u\in\L_t\big(x_1\big(\textstyle\prod_{s=\ell}^{d-2}x_{n-st-2}\big)\big(\prod_{s=0}^{\ell-1}x_{n-st-1}\big),x_1\big(\prod_{s=0}^{d-2}x_{n-st}\big)\big).
	\end{equation}
	
	Setting 	$$
	u_p=x_1\Big(\prod_{s=p}^{d-2}x_{n-st-1}\Big)\Big(\prod_{s=0}^{p-1}x_{n-st}\Big), \,\,\, p=\ell,\dots,d-1,
	$$
	we have that $$\L_t\big(x_1\big(\textstyle\prod_{s=\ell}^{d-2}x_{n-st-1}\big)\big(\prod_{s=0}^{\ell-1}x_{n-st}\big),x_1\big(\prod_{s=0}^{d-2}x_{n-st}\big)\big)=\big\{u_\ell>u_{\ell+1}>\dots>u_{d-1}\big\}.$$
	We prove that if 
	\begin{equation}\label{eq:noCM}
	u >u_\ell=x_1\big(\textstyle\prod_{s=\ell}^{d-2}x_{n-st-1}\big)\big(\prod_{s=0}^{\ell-1}x_{n-st}\big), 
	\end{equation}
	then $I$ is not Cohen--Macaulay.
	
	By hypothesis $v=v_\ell$ ($\ell =0, \ldots, d-2$) and from (\ref{eq:noCM}) 
	$$
	u\ge x_1\big(\textstyle\prod_{s=\ell-1}^{d-2}x_{n-st-1}\big)\big(\prod_{s=0}^{\ell-2}x_{n-st}\big),
	$$
	where $x_1\big(\textstyle\prod_{s=\ell-1}^{d-2}x_{n-st-1}\big)\big(\prod_{s=0}^{\ell-2}x_{n-st}\big)$ is the greatest monomial with minimum equal to $1$ preceding $u_\ell$, with respect to the squarefree lex order. The monomials $w\in M_{n,d,t}$ smaller than $v$ are $v_{\ell+1}>\dots>v_d$ and for each $v_q$, $q=\ell+1, \ldots, d$, we have
	$$
	u\ge x_1\big(\textstyle\prod_{s=\ell-1}^{d-2}x_{n-st-1}\big)\big(\prod_{s=0}^{\ell-2}x_{n-st}\big)=x_1(v_{\ell+1}/x_{\min(v_{\ell+1})})>x_1(v_q/x_{\min(v_q)}).
	$$
	Hence, setting $u=x_1x_{i_2}\cdots x_{i_d}$, we have $i_2>1+t$ and
	\begin{align*}
	x_1x_{i_2-t}\cdots x_{i_d-t}&\ge x_1x_{n-(d-1)t-1}\cdots x_{n-\ell t-1}\cdot x_{n-(\ell-1)t}\cdots x_{n-t}\\
	&=x_1(v_{\ell}/x_{\max(v_\ell)})\ge x_1(v_{\ell+1}/x_{\max(v_{\ell+1})}).
	\end{align*}
	Therefore, $I$ is a completely $t$--spread lexsegment ideal with a linear resolution. Setting $a=n-1-(d-1)t$ and $b=a+d$, we have
	\begin{align*}
	\beta_{a}(I)=\beta_{a,b}(I)&=\big|\big\{ w\in\mathcal{L}_t^f(u):\min(u)=1\big\}\big|-\big|\big\{w\in\mathcal{L}_t^f(v)\setminus\{v\}:\max(w)=n\big\}\big|\\
	&\ge\big|\big\{ u>u_{\ell}>\dots>u_{d-1}\}\big|-(d-\ell)= d+1-\ell-(d-\ell)= 1.
	\end{align*}
	It follows that $\pd(I)=n-1-(d-1)t$ and consequently $I$ is not Cohen--Macaulay. Condition (c) follows.
	
	Conversely, assume (c) holds. We prove that $\dim(S/I)=1+(d-1)t$. This is equivalent to verify that $\dim(\Delta)=(d-1)t$. For this aim it sufficies to show the following facts:
	\begin{enumerate}
		\item[(i)] there exists $F\in\mathcal{F}(\Delta)$ with cardinality $|F|=1+(d-1)t$;
		\item[(ii)] for any $A\subseteq[n]$ with $|A|=2+(d-1)t$, then $A\notin\Delta$.
	\end{enumerate}
	\noindent
	(i) Let us consider $F=[n-(d-1)t,n]$. By the structure of $v$, it follows that $F\in\Delta$ and furthermore $|F|=1+(d-1)t$, as desired.\medskip
	\\ 
	(ii) Let $A\subseteq[n]$ with $|A|=2+(d-1)t$. We have $1\le\min(A)\le n-(d-1)t-1$.
	
	If $\min(A)=n-(d-1)t-1$, then $A=[n-(d-1)t-1,n]\notin\Delta$, as
	$$
	x_{n-(d-1)t-1}x_{n-(d-2)t-1}\cdots x_{n-t-1}x_{n-1}\in I.
	$$
	
	If $1<\min(A)<n-(d-1)t-1$, setting $\ell_1=\min(A)$ and $\ell_r=\min\{\ell\in A:\ell\ge\ell_{r-1}+t\}$, for $r\ge 2$, the sequence $L:\ell_1>\ell_2>\cdots$ has at least $d$ terms, and ${\bf x}_L\in I$. Thus, $A\notin\Delta$.
	
	If $\min(A)=1$, we consider the sequence $L$ above defined. If ${\bf x}_L\le u$, then $A\notin\Delta$. Otherwise, set $B=A\setminus\{1\}$. 
	
	If $\min(B)=n-(d-1)t$, then $A=\{1\}\cup B=\{1\}\cup[n-(d-1)t,n]\notin\Delta$, as $x_1x_{n-(d-2)t}\cdots x_{n-t}x_{n}\in I$. If $\min(B)<n-(d-1)t-1$, setting $\nu_1=\min(B)$, and $\nu_r=\min\{\nu\in B:\nu\ge\nu_{r-1}+t\}$, for $r\ge 2$, the sequence $N:\nu_1>\nu_2>\cdots$ has at least $d$ terms, and ${\bf x}_N\in I$, implying that $A\notin\Delta$. 
	
	Let $\min(B)=n-(d-1)t-1$. Then $A=\{1\}\cup B=\{1\}\cup[n-(d-1)t-1,n]\setminus\{m\}$, for some $m>n-(d-1)t-1$.
	We can write $m=n-pt-q$, with $0\le p\le d-3$ and $0\le q\le t-1$. If $q\ge2$, then $\supp(v)\subseteq B\subseteq A$, and $A\notin\Delta$. If $q=1$, then $\{1,n-(d-2)t,\dots,n-t,n\}\subseteq A$, and $x_1x_{n-(d-2)t}\cdots x_{n-t}x_n\in I$, thus $A\notin\Delta$. If $q=0$, we consider $p$. If $p<\ell-1$, the structure of $v$ implies that $\supp(v)\subseteq A$, and once again $A\notin\Delta$. If $p\ge\ell-1$, then $C=\{n-kt-1:k=0,\dots,d-1\}\subseteq A$, and ${\bf x}_C>v$, so ${\bf x}_C\in I$, implying $A\notin\Delta$. Finally, in all possible cases, $A\notin\Delta$, and (ii) follows.
	
	Finally, $\dim(S/I)=1+(d-1)t$.
	
	Now, set $I_{u_p}=(\mathcal{L}_t(u_p,v))$, $p=\ell,\dots,d-1$. 
	We prove that $I_{u_{p}}$ is Cohen--Macaulay, 
	\emph{i.e.}, $\pd(I_{u_p})=n-2-(d-1)t$. 
	
	First, assume $p=\ell$. We have 
	$$
	u=u_\ell=x_1(v_\ell/x_{\min(v_\ell)})>x_1(v_q/x_{\min(v_q)}),\,\,\, q=\ell+1,\dots,d
	$$
	and setting 
	$u=x_1x_{i_2}\cdots x_{i_d}$, it follows that
	$$
	x_1x_{i_2-t}\cdots x_{i_d-t}=x_1(v_{\ell}/x_{\max(v_\ell)}).
	$$
	Thus, $I_{u_\ell}$ is a completely $t$--spread lexsegment ideal with a linear resolution. Set $a=n-1-(d-1)t$, $b=a+d$, we have
	\begin{align*}
	\beta_{a}(I_{u_\ell})=\beta_{a,b}(I_{u_\ell})&=\big|\big\{ w\in\mathcal{L}_t^f(u):\min(u)=1\big\}\big|-\big|\big\{w\in\mathcal{L}_t^f(v)\setminus\{v\}:\max(w)=n\big\}\big|\\
	&=\big|\big\{u_\ell>u_{\ell+1}>\dots >u_{d-1}\}\big|-|\{v_{\ell+1}>\dots>v_d\}|=0,
	\end{align*}
	and, as $m=\big|\{w\in\mathcal{L}_t^f(u):\min(w)=2\}\big|>1$, then
	\begin{align*}
	\beta_{a-1}(I_{u_\ell})&=\sum_{w\in\mathcal{L}_t^f(u)}\binom{n-\min(w)-(d-1)t}{n-2-(d-1)t}-\sum_{\substack{w\in\mathcal{L}_t^f(v)\\ w\ne v}}\binom{\max(w)-(d-1)t-1}{n-2-(d-1)t}
	\\
	&=(d-\ell)\binom{a}{a-1}+m\binom{a-1}{a-1}-(d-\ell)\binom{a}{a-1}=m>0.
	\end{align*}
	Thus, $\pd(I_{u_\ell})=a-1=n-(d-1)t-2$, and $I_{u_\ell}$ is Cohen--Macaulay, as desired.
	
	Now let $p=\ell+1$. Firstly, observe that $J=(\L_t(x_2x_{2+t}\cdots x_{2+(d-1)t},v))\subseteq I_{u_{\ell+1}}$. Moreover, $J$ is an initial $t$--spread lexsegment ideal in $K[x_2,\dots,x_{n}]$ with $\pd(J)=n-2-(d-1)t$. Thus $J$ has a linear resolution, and Lemma \ref{lem:pdJ<I} implies $\pd(I_{u_{\ell+1}})\ge n-2-(d-1)t$. For the other inequality, consider the short exact sequence:
	$$
	0\rightarrow S/(I_{u_{\ell+1}}:u_{\ell})\rightarrow S/I_{u_{\ell+1}}\rightarrow S/(I_{u_{\ell+1}},u_\ell)\rightarrow0.
	$$
	Observe that $S/(I_{u_{\ell+1}},u_\ell)=S/I_{u_{\ell}}$, and $\pd(S/I_{u_{\ell}})=\pd(I_{u_{\ell}})+1=n-1-(d-1)t$. 
	Let us verify that $I_{u_{\ell+1}}:u_\ell=\mathfrak{p}_{[2,n-(d-1)t-1]\cup\{n-\ell t\}}$ is a complete intersection and $\pd(S/(I_{u_{\ell+1}}:u_\ell))=n-1-(d-1)t$.
		
	By \cite[Proposition 1.2.2]{JT}, a set of generators for $I_{u_{\ell+1}}:u_\ell$ is given by $$\big\{w/\gcd(w,u_\ell):w\in G(I_{u_{\ell+1}})\big\}.$$
	
	If $w=u_p$, $p=\ell+1,\dots,d-1$, we have $u_{p}/\gcd(u_{p},u_\ell)=x_{n-(p-1)t}\cdots x_{n-\ell t}$ and for $p=\ell+1$, $u_{\ell+1}/\gcd(u_{\ell+1},u_\ell)=x_{n-\ell t}$, thus $x_{n-\ell t}$ divides all these generators.
	
	If $w\in\mathcal{L}_t(x_2x_{2+t}\cdots x_{2+(d-1)t},v)$, then $\min(w)\in[2,n-(d-1)t-1]$. Note that $[2,n-(d-1)t-1]\cap\supp(u_\ell)=\emptyset$, thus $x_{\min(w)}$ divides $w/\gcd(w,u_\ell)$ and for $z=x_{\min(w)}(u_\ell/x_{1})\in G(I_{u_{\ell+1}})$, we have that $z/\gcd(z,u_\ell)=x_{\min(w)}\in I_{u_{\ell+1}}:u_\ell$. 
	
	Finally, we have verified that $I_{u_{\ell+1}}:u_\ell=\mathfrak{p}_{[2,n-(d-1)t-1]\cup\{n-\ell t\}}$ and consequently $\pd(S/(I_{u_{\ell+1}}:u_\ell))=n-1-(d-1)t$.

	Hence,
	$$
	\pd(S/I_{u_{\ell+1}})\le\max\big\{ \pd(S/I_{u_{\ell}}), \pd(S/(I_{u_{\ell+1}}:u_\ell))\big\}=n-1-(d-1)t,
	$$
	so $\pd(I_{u_{\ell+1}})\le n-2-(d-1)t$. Thus $\pd(I_{u_{\ell+1}})=n-2-(d-1)t$ and $I_{u_{\ell+1}}$ is Cohen--Macaulay.
	For $p\in \{\ell+2,\dots,d\}$ the same arguments work and the proof of Theorem \ref{Teor:ItLexCM>2} is complete.

\section{Conclusions and Perspectives}\label{sec4}

In this article, we have investigated the Cohen--Macaulayness of $t$--spread lexsegment ideals using the theory of simplicial complexes. The completely $t$--spread lexsegment ideals have played an essential role. Non--completely $t$--spread lexsegment ideals are much less understood. Here are some possible questions to investigate.

\begin{Que}
	\rm Determine the standard primary decomposition of non--completely $t$--spread lexsegment ideals.
\end{Que}
\begin{Que}
	\rm Classify the pure simplicial complexes associated to $t$--spread lexsegment ideals.
\end{Que}

In \cite{OO}, Olteanu classified all \textit{sequentially Cohen--Macaulay} squarefree completely lexsegment ideals. To the best of our knowledge, a classification for squarefree non--completely lexsegment ideals is unknown. More generally, we ask the following
\begin{Que}
	\rm Classify all sequentially Cohen--Macaulay $t$--spread lexsegment ideals.
\end{Que}

Finally, due to many examples that we performed and our results on completely $t$--spread lexsegment ideals, we are led to conjecture the following:
\begin{Conj}\label{Conj:dimpdtspreadLex}
	\rm Let $I$ be a $t$--spread lexsegment ideal generated in degree $d\ge2$. Then $\dim(S/I)\ge(d-1)t$.
\end{Conj}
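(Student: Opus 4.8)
The plan is to obtain the bound from a single uniform containment, bypassing entirely the primary decomposition, which for non--completely $t$--spread lexsegment ideals is not available and is precisely the feature that makes a direct analysis hard. The key observation is that \emph{every} $t$--spread lexsegment ideal generated in degree $d$ sits inside the $t$--spread Veronese ideal of the same degree, and this single inclusion already forces the desired lower bound on the dimension.

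First I would record that, by the very definition of a $t$--spread lexsegment, one has $I=(\mathcal{L}_t(u,v))$ with $\mathcal{L}_t(u,v)\subseteq M_{n,d,t}$, whence
$$
I\ \subseteq\ (M_{n,d,t})\ =\ I_{n,d,t}.
$$
Crucially, this holds for an arbitrary $t$--spread lexsegment ideal, whether or not it is completely $t$--spread, so the argument applies uniformly to the class considered in the conjecture. Next I would translate the inclusion into a dimension inequality: since $I\subseteq I_{n,d,t}$, the ring $S/I_{n,d,t}$ is a quotient of $S/I$, and therefore $\dim(S/I)\ge\dim(S/I_{n,d,t})$. Finally, by \cite[Theorem 2.3]{EHQ} the Veronese ideal $I_{n,d,t}$ is height--unmixed of height $n-(d-1)t$ (see also the decomposition (\ref{eq:primdecompIndt})), so that
$$
\dim(S/I_{n,d,t})\ =\ n-\big(n-(d-1)t\big)\ =\ (d-1)t.
$$
Combining the two displays yields $\dim(S/I)\ge(d-1)t$, which is exactly the claim.

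The only point requiring (mild) attention is the standing hypothesis $n\ge 1+(d-1)t$, which guarantees $M_{n,d,t}\neq\emptyset$ and hence that $I_{n,d,t}$ is a proper ideal with the stated height and dimension; under this hypothesis there is no genuine obstacle, and the real ``difficulty'' is conceptual rather than technical, namely resisting the temptation to attempt a prime--by--prime analysis of a non--completely lexsegment ideal and instead reducing to the Veronese ideal. As a consistency check one may also route through the initial lexsegment $J=(\mathcal{L}_t^i(v))\supseteq I$, for which Corollary \ref{cor:JinitialInvariants} gives $\dim(S/J)=n-j_1\ge(d-1)t$ using $j_1\le n-(d-1)t$; the containment in $I_{n,d,t}$ is preferable, however, since it needs no case distinction on $\min(v)$ and covers the equigenerated degree--$d$ situation in one stroke.
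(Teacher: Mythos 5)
Your argument is correct, and it is worth saying plainly that it does more than the paper does: the paper states this as a conjecture and offers no proof, only verifying it for $t=1$ and for completely $t$--spread lexsegment ideals via the explicit primary decompositions of Theorems \ref{primdecompinitialtspreadlex}, \ref{primdecompfinaltspreadlex} and \ref{primdecompgeneraltspreadlex}. Your route bypasses that machinery entirely: every generator of $I=(\mathcal{L}_t(u,v))$ lies in $M_{n,d,t}$, so $I\subseteq I_{n,d,t}$ with no completeness assumption; since $S/I_{n,d,t}$ is a quotient of $S/I$, monotonicity gives $\dim(S/I)\ge\dim(S/I_{n,d,t})$; and by \cite[Theorem 2.3]{EHQ} the Veronese ideal is unmixed of height $n-(d-1)t$, so $\dim(S/I_{n,d,t})=(d-1)t$. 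Each step is sound. In the simplicial language of the paper the same point reads: for every $w\in M_{n+1-t,d-1,t}$ the set $\supp_t(w)$, of cardinality $(d-1)t$, is a face of the complex associated to $I$, because ${\bf x}_{\supp_t(w)}$ is not divisible by any $t$--spread monomial of degree $d$ --- exactly the observation already made in the proof of Theorem \ref{primdecompinitialtspreadlex} --- and this generalizes the paper's own $t=1$ remark, where every $(d-1)$--subset of $[n]$ is a face. The only hypothesis you need is the standing assumption $n\ge 1+(d-1)t$, which guarantees $M_{n+1-t,d-1,t}\ne\emptyset$ and hence that the height formula for $I_{n,d,t}$ applies; you note this. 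So your containment argument settles the conjecture in full generality, whereas the paper's verification is confined to the cases where a primary decomposition is available.
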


Note that Conjecture \ref{Conj:dimpdtspreadLex} is trivially true when $t=1$. Indeed, if $\Delta$ is the simplicial complex associated to $I$, then each set $A\subseteq[n]$, with $|A|=d-1$, is in $\Delta$. Thus $\dim(S/I)\ge d-1$. Conjecture \ref{Conj:dimpdtspreadLex} is also true for completely $t$--spread lexsegment ideals by virtue of Theorems \ref{primdecompinitialtspreadlex}, \ref{primdecompfinaltspreadlex}, \ref{primdecompgeneraltspreadlex}.\\\\
\emph{Acknowledgement}. We thank the the referee for his/her helpful suggestions that allowed us to improve the quality of the paper.


\begin{thebibliography}{99}
	{\small
	\bibitem{LA} \textsc{L.~ Amata.} \textit{Computational methods for $t$--spread monomial ideals}, available at arXiv preprint \url{https://arxiv.org/abs/2110.11801} [math.AC].
	
	\bibitem{AFC1} \textsc{L.~Amata, M.~Crupi, A. ~Ficarra}. \textit{Upper bounds for extremal Betti numbers of $t$--spread strongly stable ideals}, Bull. Math. Soc. Sci. Math. Roumanie, {\bf 65} (113), No. 1 (2022), 13--34.
	
	\bibitem{ACF3} \textsc{L.~Amata, M.~Crupi, A.~Ficarra}. \textit{Projective dimension and Castelnuovo--Mumford regularity of $t$--spread ideals}, International Journal of Algebra and Computation {\bf 32}, No. 4 (2022), 837--858.
	
	\bibitem{AFC2} \textsc{L.~Amata, A.~Ficarra, M.~Crupi}. \textit{A numerical characterization of extremal Betti numbers of $t$--spread strongly stable ideals}, J. Algebraic Combin. {\bf 32}(4) (2022), 837--858.	
	
	\bibitem{AEL} \textsc{C.~Andrei, V.~Ene, B.~Lajmiri}. \textit{Powers of t--spread principal Borel ideals}, Archiv der Mathematik {\bf 112} (6) (2019), 587--597.
	
	\bibitem{CAC} \textsc{C.~Andrei--Ciobanu}, \textit{Kruskal--Katona Theorem for $t$--spread strongly stable ideals}, Bull.Math. Soc. Sci. Math. Roumanie ({N.S.}) {\bf 62(110)}(2)  (2019), 107--122.
	
	\bibitem{ADH} \textsc{A. Aramova, E. De Negri, J. Herzog}, \textit{Lexsegment ideals with a linear resolution}, Illinois J. Math. {\bf 42}(3) (1998), 509--52.
	
	\bibitem{AHH2} \textsc{A.~Aramova, J.~Herzog, T.~Hibi}. \textit{Squarefree lexsegment ideals}.  Math.Z. {\bf 228} (1998), 353--378.
	
	\bibitem{BST} \textsc{V. Bonanzinga, L. Sorrenti, N. Terai}, \textit{Pure and Cohen–Macaulay simplicial complexes associated with squarefree lexsegment Ideals}, Comm. Alg. {\bf 40}(11) (2012), 4195--4214
	
	\bibitem{DH} \textsc{E. De Negri, J. Herzog}, \textit{Completely lexsegment ideals}, Proc. Amer. Math. Soc., {\bf 126} (12) (1998), 3467--3473.
	
	\bibitem{RD} \textsc{R.~Dinu}, \textit{Gorenstein $T$--spread Veronese algebras}, Osaka J. Math. {\bf 57}(4) (2020), 935--947.
	
	\bibitem{EK} \textsc{S. Eliahou, M. Kervaire}, \textit{Minimal resolutions of some monomial ideals}, J. Algebra 129 (1990), 1--25. 
	
	\bibitem{EHQ} \textsc{V.~Ene, J.~Herzog, A.~A.~Qureshi}, \textit{t--spread strongly stable monomial ideals}, Comm. Algebra {\bf 47}(12), 5303--5316 (2019).
	
	\bibitem{EOS2010} \textsc{V. Ene, A. Olteanu, L. Sorrenti}, \textit{Properties of lexsegment ideals}, Osaka J. Math. {\bf 47} (1) (2010) 67–87.
	
	\bibitem{FC1} \textsc{A. Ficarra, M. Crupi}, \textit{Shifting operations and completely $t$--spread lexsegment ideals}, Comm. Alg.  {\bf 50}(8) (2022), 3320--3337.
	
	\bibitem{FHT2009} \textsc{C. A. Francisco, H. T. Ha, A. Van Tuyl}, \textit{Splittings of monomial ideals}, Proc. Amer. Math. Soc., {\bf 137} (10) (2009), 3271-3282.
	
	\bibitem{GDS} \textsc{D.~R.~Grayson, M.~E.~Stillman}, {\em Macaulay2, a software system for research in algebraic geometry}. Available at \url{http://www.math.uiuc.edu/Macaulay2}.
	
	\bibitem{JT} \textsc{J.~Herzog, T.~Hibi}, \emph{Monomial ideals}, Graduate texts in Mathematics {\bf 260}, Springer--Verlag, 2011.
	
	\bibitem{OO} \textsc{O. Olteanu}, \textit{Classes of sequentially Cohen--Macaulay squarefree monomial ideals}, Algebra Colloq. {\bf 21} (04) (2011), 575--590.
	
	}	
\end{thebibliography}
\end{document}